\definecolor{cross}{RGB}{140,140,140}
\newcommand\ccancel[2][cross]{\renewcommand\CancelColor{\color{#1}}\xcancel{#2}}
\newenvironment{changemargin}[2]{%
\begin{list}{}{%
\setlength{\topsep}{0pt}%
\setlength{\leftmargin}{#1}%
\setlength{\rightmargin}{#2}%
\setlength{\listparindent}{\parindent}%
\setlength{\itemindent}{\parindent}%
\setlength{\parsep}{\parskip}%
}%
\item[]}{\end{list}}
\newcounter{temporaryResumeCounter}
\theoremstyle{plain}
\newtheorem{thm}{Theorem}[section]
\newtheorem{lem}[thm]{Lemma}
\newtheorem{thma}{Theorem}
\newtheorem{prop}[thm]{Proposition}
\newtheorem{cor}[thm]{Corollary}
\newtheorem{remark}[thm]{Remark}
\theoremstyle{definition}
\newtheorem{defn}[thm]{Definition}
\newtheorem{rem}[thm]{Remark}
\newtheorem{example}[thm]{Example}
\newcommand{\pref}[2]{\hyperref[#2]{#1 \ref*{#2}}}
\newcommand{\bbE}{{\mathbb{E}}}
\newcommand{\bbH}{{\mathbb{H}}}
\newcommand{\bbI}{{\mathbb{I}}}
\newcommand{\bbN}{{\mathbb{N}}}
\newcommand{\bbP}{{\mathbb{P}}}
\newcommand{\bbQ}{{\mathbb{Q}}}
\newcommand{\bbR}{{\mathbb{R}}}
\newcommand{\bbZ}{{\mathbb{Z}}}
\newcommand{\calA}{{\mathcal{A}}}
\newcommand{\calC}{{\mathcal{C}}}
\newcommand{\calF}{{\mathcal{F}}}
\newcommand{\calM}{{\mathcal{M}}}
\newcommand{\calR}{{\mathcal{R}}}
\newcommand{\calS}{{\mathcal{S}}}
\newcommand{\frakD}{{\mathfrak{D}}}
\newcommand{\frakS}{{\mathfrak{S}}}
\newcommand{\rmT}{{\mathrm{T}}}
\let\ORGvarepsilon=\varepsilon
\let\varepsilon=\epsilon
\let\epsilon=\ORGvarepsilon
\newcommand{\two}{{\mathrm{I\!I}}}
\newcommand{\scal}{{\mathbf{scal}}}
\newcommand{\sect}{{\mathbf{sec}}}
\newcommand{\ric}{{\mathbf{Ric}}}
\newcommand{\diff}{{\mathrm{Diff}}}
\newcommand{\topo}{\mathrm{Top}}
\newcommand{\Spin}{\mathrm{Spin}}
\newcommand{\SO}{\mathrm{SO}}
\newcommand{\boc}{B\mathrm{O}(d)\langle c-1\rangle}
\newcommand{\ort}{\mathrm{O}}
\newcommand{\sfphi}[1]{\calS_{\calF,\varphi_{#1}}}
\newcommand{\hocolim}[1]{{\underset{#1}{\mathrm{hocolim}}}\ }
\newcommand{\im}{\mathrm{im\ }}
\newcommand{\tr}{\mathrm{\textbf{tr}}}
\newcommand{\rk}[1]{\mathrm{{rank}(#1)}}
\newcommand{\codim}{\mathrm{codim\ }}
\newcommand{\pr}{{\mathrm{pr}}}
\newcommand{\congarrow}{\overset{\cong}\longrightarrow}
\newcommand{\haut}{\mathrm{h}\mathbf{Aut}}
\newcommand{\embeds}{\hookrightarrow}
\newcommand{\too}{\longrightarrow}
\newcommand{\res}{\mathrm{res}}
\newcommand{\stab}{\mathrm{stab}}
\newcommand{\inddiff}{\mathrm{inddiff}}
\newcommand{\op}{\mathrm{op}}
\newcommand{\st}{\mathrm{st}}
\newcommand{\id}{\mathrm{id}}
\newcommand{\mfd}{\mathrm{Mfd}}
\newcommand{\ko}{\mathrm{KO}}
\newcommand{\lin}{\mathbf{Lin}}
\newcommand{\cl}{\mathrm{Cl}}
\newcommand{\fred}{\mathrm{Fred}}
\newcommand{\kom}{\mathbf{Kom}}
\newcommand{\scpr}[1]{\langle#1\rangle}
\newcommand{\ind}{\mathrm{Ind}}
\newcommand{\dirac}{\mathrm{Dir}}
\newcommand{\cyl}{\mathrm{Cyl}}
\newcommand{\hofib}{\mathrm{hofib}}
\newcommand{\cst}{\mathrm{const}}
\newcommand{\mt}{\mathrm{MT}}
\newcommand{\mtt}{\mathrm{MT\theta}}
\newcommand{\pt}{\mathrm{pt}}
\newcommand{\dt}{\mathrm{d}t}
\newcommand{\Th}{\mathrm{Th}}
\newcommand{\actson}{\curvearrowright}
\DeclarePairedDelimiter\floor{\lfloor}{\rfloor}
\newcommand{\bas}{\mathrm{bas}}
\newcommand{\trg}{\mathrm{trg}}
\newcommand{\ie}{\mbox{i.\,e.\,}{}}
\newcommand{\psc}{{\mathrm{scal}>0}}
\newcommand{\psec}{{\mathrm{sec}>0}}
\newcommand{\prc}{{\mathrm{Ric}>0}}
\newcommand{\ppc}{{p\text{-}\mathrm{curv}>0}}
\newcommand{\pkrc}{{k\text{-}\mathrm{Ric}>0}}
\newcommand{\pdkrc}{{(d-k)\text{-}\mathrm{Ric}>0}}
\newcommand{\AlgCurvOp}[1][d]{{\mathcal C_{\mathrm B}(\bbE^{#1})}}
\DeclareRobustCommand{\SkipTocEntry}[9]{}
\subjclass[2010]{53C21, 57R20, 57R65, 57R90, 58D05, 58D17.}
\begin{document}

\title[Spaces of positive Intermediate Curvature Metrics]{Spaces of positive Intermediate curvature metrics}

\author{Georg Frenck}
\thanks{G.F. was supported by Deutsche Forschungsgemeinschaft (DFG, German Research Foundation) under 281869850 (RTG 2229).}
\address{KIT, Karlsruher Institut für Technologie\\
Englerstra\ss e 2\\
76131 Karlsruhe\\
Bundesrepublik Deutschland}
\email{math@frenck.net}
\email{georg.frenck@kit.edu}
\urladdr{Frenck.net/Math}

\author{Jan-Bernhard Korda\ss}
\thanks{J.K. was supported by the SNSF-Project 200021E-172469 and the DFG-Priority programme \emph{Geometry at infinity} (SPP 2026).}
\address{}
\email{jb@kordass.eu}
\urladdr{http://kordass.eu}

\begin{abstract}
In this paper we study spaces of Riemannian metrics with lower bounds on intermediate curvatures. We show that the spaces of metrics of positive $p$-curvature and $k$-positive Ricci curvature on a given high-dimensional $\Spin$-manifold have many non-trivial homotopy groups provided that the manifold admits such a metric.
\end{abstract}

\maketitle

\section{Introduction}

\noindent

\noindent 
Given a compact manifold $M$ with possibly nonempty boundary, the classification of Riemannian metrics on $M$ satisfying a given curvature condition is a central problem in Riemannian geometry. In the present article we will study the uniqueness question. Of course, open conditions like positive scalar, Ricci or sectional curvature are preserved under small perturbations of a metric and so there cannot be a unique metric satisfying them. Therefore it is more reasonable to study uniqueness \enquote{up to continuous deformation}, which translates into the following question: 
\vspace{3pt}
\begin{changemargin}{2cm}{2cm}
\begin{center}\textit{Is the space of Riemannian metrics on $M$ satisfying a given curvature condition contractible?}\end{center}
\end{changemargin}
\vspace{3pt}
\noindent In recent years, a lot of effort has gone into understanding the homotopy type of the space $\calR_\psc(M)_h$ of metrics of positive scalar curvature which restrict to $h+\dt^2$ in a collar neighborhood of the boundary. For example, Botvinnik--Ebert--Randal-Williams in \cite{berw} have studied this space for $d$-dimensional $\Spin$-manifolds using the secondary index-invariant $\inddiff$ which is a well-defined homotopy class of a map
\[\inddiff\colon\calR_\psc(M)_h\times\calR_\psc(M)_h\to\Omega^{\infty+d+1}\ko\]
first defined by Hitchin in \cite{hitchin_spinors}. Fixing a base-point $g\in\calR_\psc(M)_h$ one obtains a homotopy class of a map $\inddiff_g\colon\calR_\psc(M)_h\to\Omega^{\infty+d+1}\ko$ and they showed that this induces a nontrivial map
\[A_{m-1}(M,g)\colon\pi_{m-1}(\calR_\psc(M)_h)\too\ko^{-d-m}(\pt)\cong\begin{cases}\bbZ &\text{ if } d+m\equiv0(4)\\\bbZ/2 &\text{ if } d+m\equiv1,2(8)\end{cases}\]
on homotopy groups, provided that $d\ge 6$, $M$ admits a $\Spin$-structure and the target is nontrivial. This shows that the space $\calR_\psc(M)_h$ is at least as complicated as the infinite loop space of the real $K$-theory spectrum. For more results on this space see \cite{walsh_hspaces,crowleyschicksteimle, erw_psc2}.

In this paper we generalize the main result from \cite{berw} to a greater class of curvature conditions. The most prominent examples of these are given by two of the intermediate curvature conditions, namely \emph{positive $p$-curvature} and \emph{$k$-positive Ricci curvature}, for precise definitions see \pref{Section}{sec:curvatureconditions}. 

The notion of \enquote{p-curvature is an extension of the scalar curvature proposed by Gromov} \cite[p.301]{labbi_stability}. It interpolates between scalar and sectional curvature and  has been studied for example in \cite{labbi_actions,botvinnik-labbi}). For $p\ge0$ let $\calR_\ppc(M)_h\subset \calR_\psc(M)_h$ denote the subspace of metrics of positive $p$-curvature.

\begin{thma}\label{thm:mainpcurv}
Let $p\ge0$ and let $M$ be a $\Spin$-manifold of dimension $d\ge6+2p$. Let $g\in\calR_{\ppc}(M)_h$. Then for all $m\ge1$ such that $d+m\equiv0(4)$ the composition
\[\pi_{m-1}(\calR_\ppc(M)_h)\too\pi_{m-1}(\calR_\psc(M)_h)\overset{A_{m-1}(M,g)}\too\ko^{-d-m}(\pt)\cong \bbZ\]
is nontrivial.
\end{thma}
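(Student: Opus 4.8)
The plan is to reduce Theorem~\ref{thm:mainpcurv} to the known non-triviality result of Botvinnik--Ebert--Randal-Williams for $\calR_\psc$, by exhibiting explicit classes in $\pi_{m-1}(\calR_\ppc(M)_h)$ whose images under $A_{m-1}(M,g)$ are already known to be non-trivial. Recall that in \cite{berw} the non-triviality of $A_{m-1}$ is witnessed by classes coming from the action of the diffeomorphism group, or more precisely from a parametrised family of psc-metrics obtained by a Gromov--Lawson / surgery construction applied over a family of manifolds detected by the Madsen--Tillmann--Weiss spectrum; concretely one uses that for $d \equiv 0 \pmod 4$ (and more generally after the index shift by $m$) there are bundles over spheres with non-vanishing $\hat{A}$-class, and the associated fibrewise psc-metrics realise a generator of $\ko^{-d-m}(\pt)\cong\bbZ$. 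The key point I would establish is that all the psc-metrics appearing in these detecting families can be taken to have positive $p$-curvature, so that the family actually lands in the subspace $\calR_\ppc(M)_h \subset \calR_\psc(M)_h$.

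The main step, therefore, is a $p$-curvature version of the Gromov--Lawson--Schoen--Yau surgery theorem: if $N$ carries a metric of positive $p$-curvature and $N'$ is obtained from $N$ by a surgery of codimension $\ge 3+p$ (equivalently, along a sphere $S^q$ with $q \le d-3-p$), then $N'$ carries a metric of positive $p$-curvature, and moreover the construction can be performed in families and compatibly with the collar structure. This is the point where the dimension hypothesis $d \ge 6+2p$ enters: one needs surgeries of codimension $\ge 3+p$ on manifolds of dimension $d$, and in the Botvinnik--Ebert--Randal-Williams machinery the relevant surgeries used to trivialise the moduli space (turning an arbitrary $\Spin$-bordism into a standard handlebody piece) have handle dimension at most $\lfloor (d-1)/2 \rfloor$; requiring $\lfloor (d-1)/2 \rfloor \le d-3-p$ forces $d \ge 6+2p$ (up to parity). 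Such a surgery theorem for $p$-curvature is, I expect, already available in the literature (it is the natural common generalisation of the scalar-curvature case $p=0$ and the results of Labbi and Botvinnik--Labbi on $p$-curvature surgery), and if not it can be proved by the standard neck-stretching argument, checking that the $p$-curvature — being, like the scalar curvature, a trace-type invariant of the curvature operator — stays positive on the deformed metric near the surgery locus when the codimension is large enough.

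Granting the family surgery theorem, I would proceed as follows. \textbf{Step 1:} recall from \cite{berw} the explicit $(m-1)$-parameter families of psc-metrics on $M$ (fixed boundary condition $h+\dt^2$) that detect a generator of $\ko^{-d-m}(\pt)$ under $A_{m-1}(M,g)$; these are built by starting from the fixed metric $g$, forming a fibre bundle over $S^{m-1}$ with fibre a closed $\Spin$-manifold $W$ supporting a fibrewise psc-metric, and gluing via the surgery construction. \textbf{Step 2:} arrange that the closed fibrewise metrics on $W$ have positive $p$-curvature — here one uses that $W$ can be taken to be a suitable sphere bundle or $\hat{A}$-detecting manifold on which one has an explicit positively curved (hence positive $p$-curvature) metric, or alternatively that $W$ is built from the disc by handles of codimension $\ge 3+p$ so that the $p$-curvature surgery theorem applies. \textbf{Step 3:} apply the family $p$-curvature surgery theorem to glue these onto $(M,g)$ rel boundary, obtaining a family in $\calR_\ppc(M)_h$ whose image in $\calR_\psc(M)_h$ agrees (up to homotopy) with the original detecting family. \textbf{Step 4:} conclude that the composite $\pi_{m-1}(\calR_\ppc(M)_h) \to \pi_{m-1}(\calR_\psc(M)_h) \xrightarrow{A_{m-1}(M,g)} \ko^{-d-m}(\pt)$ hits a generator, hence is non-trivial (in fact surjective, hence of infinite image, since the target is $\bbZ$).

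The hard part will be Step 2--3, i.e. verifying that \emph{every} metric occurring in the Botvinnik--Ebert--Randal-Williams construction — not just the final glued one, but all intermediate metrics in the Gromov--Lawson deformations and the standard model pieces — can be chosen with positive $p$-curvature under the hypothesis $d \ge 6+2p$. In particular one must check that the "standard" psc-metrics on the building blocks (torpedo metrics on discs, the metrics on the handles and on $S^{d} \setminus (\text{discs})$) actually have positive $p$-curvature; a torpedo metric $g_{\torp}$ on $D^{k}$ has positive $p$-curvature precisely when $k \ge 1+p$ (since it is a warped product whose curvature operator one can compute explicitly), which again is exactly what the dimension bound guarantees. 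Once these model-piece computations are in place, the rest is a routine translation of the scalar-curvature argument of \cite{berw}, since the secondary index $\inddiff$ and the map $A_{m-1}$ depend only on the underlying psc-metric and are insensitive to the refinement to positive $p$-curvature.
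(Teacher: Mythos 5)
Your high-level strategy is the right one and is, in spirit, what the paper does: the central geometric input is the parametrized Gromov--Lawson--Schoen--Yau surgery theorem for positive $p$-curvature in codimension $\ge p+3$, and given that, one reruns the index-theoretic machinery of \cite{berw} with $\calR_\ppc$ in place of $\calR_\psc$. Note, however, that the paper does not reprove this parametrized surgery theorem; it cites it from Hoelzel \cite{hoelzel} (pointwise) and Korda{\ss} \cite{kordass} (parametrized), abstracts it into the notion of a \enquote{cellular, parametrized codimension $c$ surgery-stable Riemannian functor implying positive scalar curvature}, and proves the general \pref{Theorem}{thm:maingeneral}; Theorem A then follows by specialising $\calF = \calR_\ppc$ with $c = p+3$.

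There are two genuine problems with your version.

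First, your Step 2 is not right as stated. A closed $\Spin$-manifold supporting a psc-metric (a fortiori a positively curved metric) has vanishing $\hat\calA$-genus by Lichnerowicz, so there is no \enquote{$\hat\calA$-detecting manifold $W$ with an explicit positively curved metric}. In the \cite{berw} argument the non-vanishing $\hat\calA$-classes live in $\pi_*(\mtt_{c-1}(2n))$ as characteristic numbers of bundles, and the detecting families in $\calF(W)_h$ do not arise by writing down explicit metrics on explicit fibres; they come from the fiber-transport map of a fibration over $\Omega^\infty\mtt_{c-1}(2n)$ that is itself produced by an obstruction-theoretic pullback along the Galatius--Randal-Williams acyclic map. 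That obstruction step requires showing that the pullback action of $\diff_\partial(W_k)$ on $\calF(W_k)_{h_\circ}$ factors up to homotopy through an abelian group (an Eckmann--Hilton argument, \pref{Theorem}{thm:action-map-boundary}), which needs the existence of stable metrics (\pref{Lemma}{lem:stable-metrics}) and is not a matter of \enquote{checking every intermediate metric has positive $p$-curvature}.

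Second, you omit fibrancy. Since $d\ge 6+2p$ allows odd $d$, one must propagate the result from even to odd dimensions via the spectral-flow index theorem, and this requires that the restriction map $\res\colon\calR_\ppc(M)\to\calR_\ppc(\partial M)$ be a Serre fibration. For psc this was proved in \cite{ebertfrenck}; for positive $p$-curvature it is a new ingredient of the present paper (\pref{Theorem}{thm:fibrant} together with \pref{Proposition}{prop:fibrant} and the Gajer-type curvature computation of \pref{Lemma}{lem:computation-gajer}), and it does not follow formally from the surgery theorem. Relatedly, your accounting of the dimension bound is off: $d\ge 6+2p$ is exactly $d\ge 2c$ with $c=p+3$, which is what \pref{Theorem}{thm:maingeneral} requires; your estimate $\lfloor(d-1)/2\rfloor\le d-3-p$ gives only $d\ge 2p+4$ (even) or $d\ge 2p+5$ (odd).
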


\noindent In \cite{wolfson} Wolfson introduced the notion of $k$-positive Ricci curvature which has been studied for example in \cite{crowleywraith, walshwraith}. A manifold is said to have $k$-positive Ricci curvature if the sum of the $k$ smallest eigenvalues of the Ricci curvature is positive. This gives an interpolation between positive scalar curvature being $d$-positive Ricci curvature and positive Ricci curvature which is $1$-positive Ricci curvature. For $1\le k\le d$ let $\calR_\pkrc(M)_h\subset \calR_\psc(M)_h$ denote the subspace of metrics of $k$-positive Ricci curvature. For technical reasons it is more natural to state our result for $(d-k)$-positive Ricci curvature instead of $k$-positive Ricci curvature.

\begin{thma}\label{thm:maindkric}
Let $k\ge1$ and let $M$ be a $\Spin$-manifold of dimension $d\ge 4+2k$. Let $g\in\calR_{\pdkrc}(M)_h$. Then for all $m\ge1$ such that $d+m\equiv0(4)$ the composition
\[\pi_{m-1}(\calR_\pdkrc(M)_h)\too\pi_{m-1}(\calR_\psc(M)_h)\overset{A_{m-1}(M,g)}\too\ko^{-d-m}(\pt)\]
is nontrivial.
\end{thma}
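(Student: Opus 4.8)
The plan is to deduce Theorem~\ref{thm:maindkric} from the positive scalar curvature theorem of Botvinnik--Ebert--Randal-Williams by showing that the homotopy classes in $\pi_{m-1}(\calR_\psc(M)_h)$ detected by $A_{m-1}(M,g)$ are carried by families of metrics of $(d-k)$-positive Ricci curvature, and hence lie in the image of the map $\pi_{m-1}(\calR_{\pdkrc}(M)_h)\to\pi_{m-1}(\calR_\psc(M)_h)$. The invariant $\inddiff_g$, and thus the homomorphism $A_{m-1}(M,g)$, is built from the spin Dirac operator and only feels the scalar curvature through the Schr\"odinger--Lichnerowicz formula, so it needs no modification; what must be redone is the geometric construction in \cite{berw} of nontrivial families of psc metrics, performed this time inside the smaller space $\calR_{\pdkrc}(M)_h$. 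This is the strategy already used for Theorem~\ref{thm:mainpcurv}, with positive $p$-curvature replaced by $(d-k)$-positive Ricci curvature.

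The geometric heart is a surgery theorem for $(d-k)$-positive Ricci curvature: if $(M^d,g)$ has $(d-k)$-positive Ricci curvature and one performs surgery along an embedded sphere $S^\ell\hookrightarrow M$ with trivialised normal bundle of codimension $d-\ell\ge k+2$, then the resulting manifold carries a $(d-k)$-positive Ricci metric agreeing with $g$ away from the surgery region. The proof is the Gromov--Lawson bending: near $S^\ell$ one deforms $g$ to a metric of the shape $g|_{S^\ell}+dr^2+\rho(r)^2 g_{S^{d-\ell-1}}$ with $\rho$ small and suitably concave, and caps this off by a rotationally symmetric metric on the handle $D^{\ell+1}\times S^{d-\ell-1}$; in all of these pieces the $d-\ell-1\ge k+1$ directions tangent to the shrunk normal sphere have Ricci curvature of order $\rho^{-2}$, while the remaining $\ell+1\le d-k-1$ Ricci eigenvalues stay bounded below independently of $\rho$. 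Hence the $d-k$ smallest Ricci eigenvalues necessarily include at least $d-k-\ell-1\ge 1$ of the large ones, and their sum is positive once $\rho$ is small enough. One also needs the parametrised statement that the inclusion of $(d-k)$-positive Ricci metrics before the surgery into those after it is a weak homotopy equivalence; this should follow as in the Chernysh--Walsh refinement of the Gromov--Lawson theorem, the deformation above being canonical enough to run fibrewise over a compact parameter space.

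Granting these, one re-runs the positive scalar curvature cobordism-category machinery of \cite{berw} with the single change that the codimension bound $3$ coming from the Gromov--Lawson theorem is everywhere replaced by $k+2$; equivalently, along the cobordisms appearing in the argument one attaches only handles of index in the range $\{k+1,\dots,d-k-1\}$, both when making cobordisms highly connected and when realising bordism classes by iterated surgeries on products of spheres before invoking the Galatius--Randal-Williams theorem. These steps need handles of index up to roughly the middle dimension $d/2$ together with the slack already present in \cite{berw}; the same bookkeeping that produces the bound $d\ge 6+2p$ in Theorem~\ref{thm:mainpcurv} from codimension-$(p+3)$ surgeries produces $d\ge 4+2k$ here from codimension-$(k+2)$ surgeries, which is our hypothesis. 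One also checks, exactly as for Theorem~\ref{thm:mainpcurv}, that the explicit model metrics used in the construction --- the rotationally symmetric metrics on the handles, with transverse sphere of dimension $d-\ell-1\ge k+1$ --- have $(d-k)$-positive Ricci curvature by the same eigenvalue count.

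The hard part will be making the last two steps genuinely watertight: one must verify that \emph{every} metric occurring along the way in the Botvinnik--Ebert--Randal-Williams argument --- on the high-index handlebodies, on the manifolds produced by the semi-simplicial resolutions, and at each gluing, stretching and isotopy used to identify the relevant spaces of metrics --- can be taken of $(d-k)$-positive Ricci curvature and varies continuously in families, and that the various comparison maps restrict to weak equivalences of the $(d-k)$-positive Ricci subspaces. Once the surgery theorem and its parametrised form are available in the codimension range forced by $d\ge 4+2k$, this is bookkeeping parallel to the proof of Theorem~\ref{thm:mainpcurv}, with the constant $3$ replaced by $k+2$ throughout. The family index computation that identifies the image of $A_{m-1}(M,g)$ on $\pi_{m-1}(\calR_\psc(M)_h)$ is then inherited unchanged, and the class it produces, being supported by metrics of $(d-k)$-positive Ricci curvature, is the required nontrivial element of the composite $\pi_{m-1}(\calR_{\pdkrc}(M)_h)\to\pi_{m-1}(\calR_\psc(M)_h)\to\ko^{-d-m}(\pt)$.
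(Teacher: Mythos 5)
Your overall strategy is exactly the paper's: establish codimension-$(k+2)$ parametrized surgery stability for $(d-k)$-positive Ricci curvature and feed it into the Botvinnik--Ebert--Randal-Williams machinery, with the bound $d\ge 4+2k$ coming from the requirement $d\ge 2c$ with $c=k+2$. But there are two points where your proposal and the paper diverge, and the second one is where you have a genuine gap.

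First, the paper does not re-prove the surgery theorem; it packages the required geometric input as ``parametrized codimension-$c$ surgery stability'' and cites Hoelzel for the unparametrized statement and Kordass for the parametrized one (\pref{Example}{ex:surgerystable}). Both theorems are already in the literature, and the paper's contribution is the axiomatization (Riemannian functors, surgery stability, fibrancy, cellularity) so that Theorems A and B both fall out of a single general statement (\pref{Theorem}{thm:maingeneral}). Your proposal instead tries to re-derive the surgery theorem, and here your eigenvalue heuristic is misleading. In the Gromov--Lawson bending $g|_{S^\ell}+dr^2+\rho(r)^2 g_{S^{d-\ell-1}}$ the Ricci eigenvalues in the $\ell+1$ base-plus-radial directions are emphatically \emph{not} bounded below independently of $\rho$: the bending profile $\rho(r)$ contributes $-\rho''/\rho$ and $-(d-\ell-1)\rho''/\rho$ terms that blow up as the tube gets pinched, and the curvature operator develops off-diagonal components. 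The whole point of Hoelzel's inner-cone condition and the careful choice of bending profile is to show that the $O(\rho^{-2})$ positivity in the $d-\ell-1$ sphere directions dominates \emph{uniformly} along the bend; your count of which eigenvalues are ``large'' versus ``bounded'' does not establish this. This is not merely ``bookkeeping to make watertight'' — it is the hard part of the surgery theorem, and the paper delegates it to the cited sources rather than redoing it.

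Second, you gesture at ``running the machinery fibrewise'' but do not isolate the fibrancy hypothesis: that the restriction map $\calR_{\pdkrc}(M)\to\calR_{\pdkrc}(\partial M)$ is a Serre fibration. This is not automatic and is exactly what is needed (together with the spectral-flow index theorem) to pass from dimension $2n$ to $2n+1$ in the proof of \pref{Theorem}{thm:maingeneral}. The paper proves fibrancy via a criterion adapted from Ebert--Frenck (\pref{Theorem}{thm:fibrant}) together with a direct check that the cylinder over a $(d-k)$-positive Ricci metric has $(d+1-k)$-positive Ricci curvature (\pref{Proposition}{prop:fibrant}). Your proposal, which treats the argument as a rerun of BERW with a different constant, misses that this is an extra structural hypothesis that has to be verified for the specific curvature condition.
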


\noindent In the spirit mentioned above, these results can be paraphrased by saying that the spaces $\calR_\ppc(M)_h$ and $\calR_\pdkrc(M)_h$ are at least as complicated as the infinite loop space of the real $K$-theory spectrum, provided that $M$ is $\Spin$ and the dimension of $M$ is big enough.

\begin{rem}[State of the art]\leavevmode
	\begin{enumerate}
		\item The corresponding results in degrees $d+m\equiv 1,2(8)$ can also be shown by our methods. This however is already known by the work of Crowley--Schick--Steimle \cite{crowleyschicksteimle} for all $d\ge6$. They showed that the orbit map $\rho\colon f\mapsto f^*g$ induces for every $g\in\calR_\psc(D^d)_h$ a surjective map
	\[\pi_{m-1}(\diff_\partial(D^d))\overset{\rho}\too \pi_{m-1}(\calR_\psc(D^d)_h) \too\ko^{-d-m}(\pt)=\bbZ/2.\]
	By extending diffeomorphisms by the identity, we get a map $\diff_\partial(D^d)\to\diff(M)$ and the result above holds if $D^d$ is replaced by any $\Spin$-manifold $M$ of positive scalar curvature. Furthermore, since the orbit map factors through any $\diff_\partial(D^d)$-invariant subspace of $\calR_\psc(D^d)_h$, the results from \cite{crowleyschicksteimle} are true for \emph{any} curvature condition that implies positive scalar curvature and is satisfied by $M$. \cite{crowleyschicksteimle} is a strict generalization of the results from \cite{hitchin_spinors, crowleyschick}.
		\item In \cite{KKRW}, Krannich--Kupers--Randal-Williams showed that the image of the orbit map $\pi_3(\diff(\bbH\bbP^2))\to\pi_3(\calR_\psec(\bbH\bbP^2))\embeds\pi_3(\calR_\ppc(\bbH\bbP^2))$ contains an element of infinite order for every $p\ge0$. Furthermore, the rational homotopy type of $\diff(M)$-invariant subspaces of $\calR_\psc(M)$ has been studied by Reinhold and the first named author in \cite{a-hat-bundles}. Here it is shown that this space has non-vanishing higher rational cohomology, provided that $M$ is a high-dimensional $\Spin$-manifold and given by $N\#(S^p\times S^q)$ for $p,q$ in a range. This is a generalization of the main result from \cite{bew}. To the best of our knowledge, those are the only other known result about non-triviality of the higher rational homotopy type of spaces of positive $p$-curvature metrics (resp. $k$-positive Ricci curvature metrics) for $p\ge1$ (resp. $k\le d-1$).
		\item Concerning $k$-positive Ricci curvature, there is one other result besides \cite{KKRW} and \cite{a-hat-bundles} we would like to mention. Namely, Walsh--Wraith have shown in \cite{walshwraith} that for $d\ge3$ and $k\ge2$ the space $\calR_\pkrc(S^d)$ is an $H$-space and the component of the round metric is in fact a $d$-fold loop space.
	\end{enumerate}
\end{rem}

\noindent The present article grew out of an attempt to extract the necessary geometric ingredients from \cite{berw}. The main one is a parametrized version of the famous Gromov--Lawson--Schoen--Yau surgery theorem \cite{gromovlawson_classification, schoenyau} which is due to Chernysh \cite{chernysh} and has been first published by Walsh \cite{walsh_cobordism}, see also \cite{ebertfrenck}. It states that the homotopy type of $\calR_\psc(M)$ is invariant under surgeries with certain dimension and codimension restrictions. It turns out, that the above theorems follow from a more general result about so-called \emph{surgery-stable} $\diff(M)$-invariant subsets $\calF(M)\subset\calR_\psc(M)$ with a few extra properties. We will give the general statement of our main result \pref{Theorem}{thm:maingeneral} in the course of \pref{Section}{sec:prelim}, after we introduced the relevant notions.

\addtocontents{toc}{\SkipTocEntry}
\subsection*{Outline of the argument}
Let $\calF(M)_h\subset\calR_\psc(M)_h$ be a $\diff_\partial(M)$-invariant subset. The strategy for proving \pref{Theorem}{thm:mainpcurv} and \pref{Theorem}{thm:maindkric} for manifolds of dimension $2n$ is to construct maps $\rho\colon\Omega^{\infty+1}\mtt_{c-1}(2n)\to \calF(M)_h$ from the infinite loop space of the Madsen--Tillmann--Weiss spectrum $\mtt_{c-1}(2n)$ associated to $\theta$ the tangential $(c-1)$-type of $M$ (cf.\ \pref{Section}{sec:mtw} for the definition). Afterwards one has to show that the composition with the maps from those theorems is weakly homotopic to the loop map of $\hat\calA\colon\Omega^{\infty}\mtt_{c-1}(2n)\to\Omega^{\infty+2n}\ko(\pt)$ which is accomplished by index theoretic arguments from \cite{berw}. Computations then show that $\Omega\hat\calA$ induces a surjection on rational homotopy groups, whenever the target is nontrivial.

The construction is first done for $M$ a certain $\theta$-nullcobordism of $S^{2n-1}$ which itself is $\theta$-cobordant to the disk relative to the boundary. By gluing in $k$ copies of $K\coloneqq ([0,1]\times S^{2n-1})\#(S^n\times S^n)$ along the boundary, we obtain the manifold $M_k\coloneqq M\cup k\cdot K$. We will show that there is a metric $g_{\st}\in\calF(K)_{h_\circ,h_\circ}$ for $h_\circ$ the round metric on $S^{2n-1}$ with the property, that the map $\calF(W)_{h_N,h_\circ}\to \calF(W\cup K)_{h_N,h_\circ}$ gluing in $g_\st$ is a homotopy equivalence for any cobordism $W\colon N\to S^{2n-1}$ and any metric $h_N\in\calR(N)$. Therefore $\calF(M_k)_{h_\circ}\embeds\calF(M)_{h_\circ}$ and in particular
\begin{equation}\label{eq:hocolim}
	\calF(M)_{h_\circ}\to \hocolim{k\to\infty}\calF(M_k)_{h_\circ}
\end{equation}
are homotopy equivalences. Consider the Borel construction
\[\calF(M)_{h_\circ}\too \underbrace{E\diff_\partial(M)\underset{\diff_\partial(M)}\times \calF(M)_{h_\circ}}_{\eqqcolon \calF(M)_{h_\circ}\sslash\diff_\partial(M)}\too B\diff_\partial(M).\]
Since there are stabilization maps $\calF(M_k)_{h_\circ}\to\calF(M_{k+1})_{h_\circ}$ and $\diff_\partial(M_k)\to\diff_\partial(M_{k+1})$ we get stabilization maps for the associated Borel constructions and after passing to the (homotopy) colimit, this yields the following fibration:
\begin{equation}\label{eq:fibration}
	p_\infty\colon\hocolim{k\to\infty}\calF(M_k)_{h_\circ}\sslash\diff_\partial(M_k)\to\hocolim{k\to\infty}B\diff_\partial(M_k)
\end{equation}
The space $\hocolim{}_{k\to\infty}B\diff_\partial(M_k)$ admits an acyclic map to
\[\Psi\colon \hocolim{k\to\infty}B\diff_\partial(M_k)\to\Omega^\infty\mtt_{c-1}(2n)\]
by the work of Galatius--Randal-Williams \cite{grw_stable}. By an obstruction argument the fibration from \eqref{eq:fibration} extends to a fibration $p_\infty^+\colon T^+\to \Omega^\infty\mtt_{c-1}(2n)$, meaning that the associated diagram of fibrations
\begin{center}
\begin{tikzpicture}
	\node (0) at (0,0) {$\hocolim{k\to\infty}B\diff_\partial(M_k)$};
	\node (1) at (5,0) {$\Omega^\infty\mtt_{c-1}(2n)$};
	\node (2) at (0,1.5) {$\hocolim{k\to\infty}\calF(M_k)_{h_\circ}\sslash\diff_\partial(M_k)$};
	\node (3) at (5,1.5) {$T^+$};
	
	\draw[->] (0) to node[auto]{$\Psi$} (1);
	\draw[->] (3) to node[auto]{$p_\infty^+$} (1);
	\draw[->] (2) to node[auto]{$p_\infty$} (0);
	\draw[->] (2) to (3);
\end{tikzpicture}
\end{center}
is homotopy-cartesian, \ie a homotopy pullback diagram. The main input for solving this obstruction problem is the fact that the pullback action $\diff_\partial(M_k)\actson\calF(M_k)_{h_\circ}$ factors up to homotopy through an abelian group for all $k$, which follows from surgery-stability combined with an argument in the style of Eckmann--Hilton. The desired map $\rho$ is then given by the fiber transport map associated to the fibration $p_\infty^+$ composed with the homotopy-inverse of the stabilization map from (\ref{eq:hocolim}). Using the additivity theorem for the index, this result is the propagated from $M$ to \emph{any} $\Spin$-manifold of the same dimension. Jumping to the next dimension requires the spectral flow index theorem and the additional assumption that the map $\calF(M)\to \calF(\partial M)$ restricting a metric to the boundary to be a fibration.

\addtocontents{toc}{\SkipTocEntry}
\subsection*{Outline of the paper} 
In \pref{Section}{sec:prelim} we develop the basic notions needed in this paper, starting with the definition of Riemannian functors in \pref{Section}{sec:riemann}. These will be contravariant functors on the category of manifolds with codimension $0$ embeddings to the category of spaces, assigning to a manifold a subspace of Riemannian metrics. The main examples are given by curvature conditions, which is reviewed in \pref{Section}{sec:curvatureconditions} where we also give precise definitions of the intermediate curvature conditions. Afterwards we introduce the notions of \emph{surgery-stability} and \emph{fibrancy} for Riemannian functors in \pref{Section}{sec:surgery-stable} and \pref{Section}{sec:fibrant}. We give a list of Riemannian functors satisfying these two conditions after proving a criterion for fibrancy. In \pref{Section}{sec:thmmain} we are finally able to state the general version of our main result. The computations of the image of the map $\hat\calA\otimes\bbQ$ mentioned above is then carried out in \pref{Section}{sec:mtw}, where Madsen--Tillmann--Weiss spectra are introduced. The final \pref{Section}{sec:index-theory} of the preliminaries is a recollection of the index-theoretic arguments from \cite{berw} involved in the proof of our main result, which we included to give some context.

In \pref{Section}{sec:main} we carry out the proof of our main theorem. In \pref{Section}{sec:diffeo-action} we show that the pullback action factors through an abelian group which builds the basis for the obstruction argument used in \pref{Section}{sec:construction} to construct the map $\rho$ mentioned above. Afterwards we deduce the propagation result in \pref{Section}{sec:propagation} which enables to extend the result from a particular manifold to all of them. For convenience we show how the proof of our main result assembles in \pref{Section}{sec:assemble}.

We close this paper by giving an overview of other recent results about the homotopy type of $\calR_\psc(M)$ in \pref{Section}{sec:epilog}. The proofs of those also depend mainly on the parametrized surgery theorem from a geometrical point of view. We believe that many of them can also be generalized to hold for positive $p$-curvature and $k$-positive Ricci curvature, too.

\tableofcontents

\section{Preliminaries}\label{sec:prelim}

\subsection{Riemannian functors}\label{sec:riemann}
  Let $M$ be a smooth compact manifold with (possibly empty) boundary $\partial M$.
  If $\partial M\not=\emptyset$ we will always assume that $M$ is equipped with a collar, i.e.\ with an embedding $c \colon \partial M \times [0,1) \to M$ such that $\{0\} \times \partial M$ is canonically identified with $\partial M \subset M$.
  We denote by $\calR(M)$ the set of all smooth Riemannian metrics $g$ on $M$, which additionally satisfy $(c|_{\partial M \times [0,\varepsilon)})^{*}g = g|_{\partial M} + \mathrm d t^2$ for some $\varepsilon>0$.
  Hence, the metrics on a manifold with boundary are assumed to be of product form within a collar region of the boundary.
  We endow $\calR(M)$ with the $C^{\infty}$-topology, i.e.\ the subspace topology of the Fréchet topological space of smooth, symmetric $(0,2)$-tensor fields on $M$.
The diffeomorphism group $\diff(M)$ of $M$ acts on $\calR(M)$ by push-forward of Riemannian metrics, i.e.\ via $\diff(M) \times \calR(M) \to \calR(M),\ (f, g) \mapsto (f^{-1})^*g$.

Let $\mfd$ denote the category which has compact manifolds with (possibly empty) boundary as objects and morphisms are given by smooth $\codim 0$-embeddings.

\begin{defn}\label{defn:Riemannian-functor}
	A functor $\calF \colon \mfd\to\topo^\op$ is called \emph{Riemannian} if $\calF(M) \subset \calR(M)$, $\calF(f) = f^*\colon \calF(N)\to\calF(M)$ and the canonical homeomorphism $\calR(M)\times\calR(N)\to \calR(M\amalg N)$ restricts to a homeomorphism $\calF(M)\times\calF(N)\to \calF(M\amalg N)$.
\end{defn}

\begin{remark}
\begin{enumerate}
	\item Since diffeomorphisms are $\codim 0$-embeddings, $\calF(M)$ is a $\diff(M)$-invariant subset of $\calR(M)$.
	\item The pull-back of Riemannian metrics along a smooth embedding is a continuous map with respect to the $C^{\infty}$-topology on the spaces of Riemannian metrics.
\end{enumerate}
\end{remark}

\begin{defn}
	We say that a Riemannian functor $\calF$ implies positive scalar curvature, if $\calF(M)\subset\calR_\psc(M)$ for every manifold $M$.
\end{defn}

\begin{example}
  \begin{enumerate}
  \item One of the most studied examples for a Riemannian functor arises from positive scalar curvature metrics, i.e.\ by the assignment
    \begin{align*}
      \calR_{\psc} \colon M \mapsto \{ g \in \mathcal R(M) \mid \scal(g) > 0 \},
    \end{align*}
    where $\scal(g) \colon M \to \mathbb R$ denotes the scalar curvature function of the metric $g$.
    It is immediately clear that for $g \in \calR_{\psc}(N)$ and a $\codim 0$-embedding $f \colon M \to N$ the pull-back $f^{*}g$ is a metric of positive scalar curvature on $M$.
  \item Clearly, this example can be extended to more general (open) curvature conditions, which we will recall in the subsequent section.
    Note, however, that for the most common conditions \enquote{positive Ricci curvature} and \enquote{positive sectional curvature} on a manifold with non-empty boundary $M$, the space $\calF(M)$ is empty. This is implied by our assumption on boundary collars, since the cylindrical metric $g + \mathrm d t^2$ on $\partial M \times \bbR$ has neither positive Ricci, nor positive sectional curvature.
  \end{enumerate}
\end{example}

\begin{defn}
	A Riemannian functor is called
	\begin{itemize}
		\item\emph{open} if for every manifold $M$ the space $\calF(M) \subset \mathcal R(M)$ is an open subspace.
		\item\emph{cellular} if for every manifold $M$ the space $\calF(M)$ is dominated by a $CW$-complex.
	\end{itemize}
\end{defn}

\begin{remark}
	An open Riemannian functor $\calF$ is cellular by \cite[Theorem 13]{palais_infinite-dimensional}. Recall that for $CW$-dominated spaces a weak homotopy equivalence is an actual homotopy equivalence by Whitehead's theorem.
\end{remark}

\subsection{Curvature conditions}\label{sec:curvatureconditions}

Let $(M^d,g)$ be a Riemannian manifold of dimension $d$.
Recall that different notions of curvature of the metric $g$ at a given point $p$ are encoded in the Riemann curvature operator $R_p$ at $p$.
Any choice of an orthonormal basis in the tangent space $\rmT_pM$ yields a description of $R_p$ in terms of a self-adjoint endomorphism on $\bigwedge^2 \mathbb E^d$, where $\mathbb E^d$ denotes the euclidean inner product space.
This object lies in the vector space of algebraic curvature operators $\AlgCurvOp[d]$, which consists of all self-adjoint endomorphisms of $\bigwedge^2 \mathbb E^d$ satisfying the Bianchi identity (cf.\ \cite[p.45ff]{besse}).
Changing orthonormal bases gives rise to an action of $\operatorname{O}(d)$ and subsets $C \subset \AlgCurvOp[d]$ invariant under this action are referred to as \emph{curvature conditions}.
We say that a Riemannian metric $g$ on a smooth manifold $M$ \emph{satisfies} a curvature condition $C \subset \AlgCurvOp[d]$,
if for every point $p \in M$ the description of $R_p$ in terms of an orthonormal basis in $\rmT_pM$ is contained in $C$.

Let $d_C\ge0$ and let $C = \{C_d\}_{d \ge d_C}$ with $C_d \subset \AlgCurvOp[d]$ be a sequence of curvature condition. We define a Riemannian functor
\begin{align*}
  \mathcal R_C \colon M^d \mapsto \{ g \in \calR(M) \mid g \text{ satisfies } C_d \}.
\end{align*}
Our convention will be that $\mathcal R_C \colon M^d \mapsto \emptyset$ for all $M^d \in \mfd$ with $0 \leq d < d_C$. As can be seen from the following examples, $d_C$ can be thought of as the lowest dimension in which it makes sense to consider the curvature condition $C$.

\begin{example}
\begin{enumerate}
\item There exist corresponding subsets to all classical curvature bounds, e.g.\ bounds on the sectional, Ricci or scalar curvature.
  For example, we can express (globally point-wise) positive sectional, Ricci and scalar curvature as conditions
  \begin{align*}
    (\psec)_d &:= \{ R \in \AlgCurvOp[d] \mid \sect(R) > 0\},\\
    (\prc)_d &:= \{ R \in \AlgCurvOp[d] \mid \ric(R) > 0\},\\
    \operatorname{psc}_d := (\psc)_d &:= \{ R \in \AlgCurvOp[d] \mid \tr(R) > 0 \}.
  \end{align*}
  Here we write $\sect(R)(X,Y) := \langle R(X \wedge Y), X \wedge Y\rangle$ for $X,Y$ an orthonormal basis of a 2-plane in $\mathbb E^d$, $\ric(R)(X) = \sum_{i=2}^d\sect(X,E_i)$ for $(X,E_2,\dots,E_d)$ an orthonormal basis of $\bbE^d$ and $\tr(R)$ denotes the trace of the algebraic curvature operator, which coincides with its scalar curvature up to a factor of $\frac{1}{2}$. In these cases we have $d_{\psec} = d_{\prc} = d_{\psc} = 2$.
\item The notion of $p$-curvature, where $p$ is an integer, was proposed by Gromov (cf.\ \cite[p.301]{labbi_stability}) and is a natural generalization of scalar and sectional curvature which provides an interpolation between both. Let $(M,g)$ be a Riemannian manifold of dimension $d \geq p + 2$ and let $\mathrm G_p(\rmT M)$ denote the $p$-Graß\-mannian bundle over $M$ and $U(P_x^{\perp})$ be a neighborhood around $0$ in the plane perpendicular to a $p$-plane $P_x\subset T_xM$. The map
    \begin{align*}
      s_p \colon \mathrm G_p(\rmT M) \to \bbR
      \qquad
      P_x \mapsto \scal_x\bigr(\exp_x(U(P_x^{\perp}))\bigr)
    \end{align*}
    is referred to as \emph{$p$-curvature function}.
    If $s_p$ is positive on all of $\mathrm G_p(\rmT M)$, the metric $g$ is said to have \emph{positive $p$-curvature}.

  The term $p$-curvature coincides with scalar curvature for $p = 0$ and with (the double of) sectional curvature for $p = d-2$. Without much effort, one can show that positive $p$-curvature implies positive $(p-1)$-curvature and thus ultimately positive scalar curvature. If $\{E_i\}_{1 \leq i \leq d-p}$ is an orthonormal basis of $P_x^{\perp} \subset \rmT_xM$ we have $s_p(P_x) = \sum_{i,j = 1}^{d-p} \sect(E_i,E_j)$, where for convenience we set $\sect(E_i,E_i) := 0$. It is easy to see in this description that $s_1(\operatorname{span}(v)) = s_0 - 2\ric(v)$ for any element $v \in \operatorname S(\rmT_xM)$, which is precisely double the value of the Einstein tensor $E(v,v) = \frac{1}{2}\scal g(v,v) - \ric(v)$.

  Positive $p$-curvature can be described as a curvature condition given by an open convex cone
  \begin{align*}
    (p\text{-curv} > 0)_d := \{ R \in \AlgCurvOp[d] &  \mid s_p(R)(P) > 0 \\
                                                                          & \quad \forall P \leq \mathbb E^d \text{ with } \dim P = p\},
  \end{align*}
  where $s_p(R) \colon G_p(\bbR^d) \to \bbR$ is the map $P \mapsto \sum_{i,j=1}^{d-p}\sect(R)(E_i,E_j)$ for an orthonormal basis $\{E_i\}$ of $P^{\perp}$. Hence for every fixed $p \geq 0$, we obtain a sequence $C_d := (p\text{-curv} > 0)_d$ of curvature conditions that yield a Riemannian functor as above with $d_{p\text{-curv} > 0} = p + 2$.

\item In \cite{wolfson} J.\ Wolfson introduced the notion of $k$-positive Ricci curvature, which interpolates between positive scalar curvature (for $k = d$) and positive Ricci curvature (for $k = 1$). A Riemannian metric $g$ on a manifold $M^d$ of dimension $d\ge3$ is said to have \emph{$k$-positive Ricci curvature} for $1 \leq k \leq d$, if the eigenvalues $k_1 \leq \ldots \leq k_d$ of the Ricci curvature satisfy $\sum_{i = 1}^k k_i > 0$. This defines a curvature condition given by an open convex cone
  \begin{align*}
    (\pkrc)_d := \{ R \in \AlgCurvOp[d] & \mid \sum_{i=1}^k \ric(R)(E_i) > 0 \\\
                                                                             & \quad \forall \{ E_1, \ldots, E_k\} \text{ orthonormal in } \mathbb E^d \}.
  \end{align*}
  For technical reasons it is more convenient for us to replace $k$ by $(d-k)$ (cf.\ \pref{Remark}{rem:dk-ricci}). For fixed $k\ge 0$ we define a sequence of curvature conditions $C_d := ((d-k)\text{-pos Ric})_d$ and obtain a Riemannian functor with $d_{(d-k)\text{-pos Ric}} = \max\{2,k+1\}$.

\item
  Another interesting condition is \emph{positive isotropic curvature}, of which S.\ Brendle and R.\ Schoen showed in \cite{brendle-schoen} that it is preserved under Ricci flow.
  For $d \geq 4 =: d_{\operatorname{pic}}$ we define the open convex cone
  \begin{align*}
    (\operatorname{pic})_d := \{ & R \in \AlgCurvOp[d] \mid\\
                               & \sect(R)(E_1, E_3)
                                 + \sect(R)(E_1, E_4)\\
                               & \qquad + \sect(R)(E_2, E_3)
                                 + \sect(R)(E_2, E_4)\\
                               & \qquad - 2\left<R(E_1 \wedge E_2)E_4 \wedge E_3\right> > 0\\
                               & \text{for any $\{E_1, \ldots, E_4\}$ orthonormal basis of a 4-plane in $\mathbb E^d$} \}.
  \end{align*}

\end{enumerate}
\noindent There are further examples for curvature conditions such as positive s-curvature, point-wise almost non-negative curvature (cf.\ \cite{hoelzel}) or positive $\Gamma_2$-curvature (cf.\ \cite{botvinnik-labbi}).
\end{example}

\noindent All of the above examples are given by open convex cones $C\subset \AlgCurvOp[d]$.

\subsection{Surgery stability}\label{sec:surgery-stable}

\noindent Let $\iota\colon N\embeds M$ be a $\codim0$-embedding and $h\in\calF(N)$. We define
\[\calF(M,\iota;h)\coloneqq \{g\in\calF(M)\colon \iota^*g = h\}.\]
This space can be thought of as the subspace of those metrics which have a fixed (standard) form on $N$. If $M$ has boundary $\partial M$ there is a restriction map $\res\colon\calF(M)\to\calR(\partial M)$ and for $h_\partial\in\calR(\partial M)$ we write
\[\calF(M)_{h_\partial} \coloneqq\res^{-1}(h_\partial)\]
Since the boundary of $N$ is collared, there is a collar of $\iota(N)\subset M$. If additionally $\iota(N)$ lies in the interior of $M$, then by prolonging the collar yields a homotopy equivalence $\calF(M,\iota;h) \simeq \calF(M\setminus\iota(N\setminus\partial N))_{\iota_*(h|_{\partial N})}$. We denote by $g_\circ^k \in \mathcal R(S^k)$ the round metric on the $k$-dimensional sphere.
\begin{defn}
	Let $c,d\in\bbN$. A Riemannian functor $\calF$ is called \emph{codimension $c$ surgery-stable on $d$-dimensional manifolds} if for every $k\le d-c$ there exists a metric $g^k\in\calF(S^{k}\times D^{d-k})_{g_\circ^{k} + g_\circ^{d-k-1}}$ such that for every embedding $\varphi\colon S^{k}\times D^{d-k}\embeds M$ into a $d$-manifold $M$ we have that
	\[
    \calF(M)\not=\emptyset\quad \Rightarrow\quad \calF(M,\varphi;g^k)\not=\emptyset.
  \]
	A Riemannian functor $\calF$ is called \emph{parametrized codimension $c$ surgery-stable on $d$-dimensional manifolds} if additionally the map
	\[\calF(M,\varphi;g^k)\embeds \calF(M)\]
	is a homotopy equivalence. We will abbreviate $\calF(M,\varphi;g^k)=\calF(M,\varphi)$. Usually there will be no chance of confusion and we will omit \enquote{on $d$-dimensional manifolds}.
\end{defn}

\noindent Let us first give an explanation for the wording \enquote{surgery-stability}. For this, let $c-1\le k\le d-c$ and let $\varphi\colon S^k\times D^{d-k}\embeds M$ be an embedding. We denote by $M_\varphi$ the manifold obtained by performing surgery on $M$ along $\varphi$. Let $\varphi^\op\colon D^{k+1}\times S^{d-k-1}\embeds M_\varphi$ be the obvious reversed surgery embedding. We now have the following observation: If $\calF$ is $\codim c$-surgery stable, then
\begin{align*}
	\calF(M)\not=\emptyset&\iff\calF(M,\varphi)\not=\emptyset\\
		&\iff \calF(\underbrace{M\setminus\im(\varphi)}_{= M_\varphi\setminus\im(\varphi^\op)})_{g_\circ^{k} + g_\circ^{d-k-1}} \not=\emptyset\\
		&\iff \calF(M_\varphi\setminus\im(\varphi^\op))_{g_\circ^{k} + g_\circ^{d-k-1}} \not=\emptyset\\
		&\iff\calF(M_\varphi,\varphi^\op)\not=\emptyset\quad\iff \calF(M_\varphi)\not=\emptyset
\end{align*}
If $\calF$ is parametrized $\codim c$-surgery stable, all of these spaces are homotopy equivalent:
\begin{cor}\label{cor:surgery-equivalence}
	Let $\calF$ be a parametrized codimension $c$ surgery stable Riemannian functor and let $\varphi\colon S^{k}\times D^{d-k}\embeds M$ be an embedding with $c-1\le k\le d-c$. Then we get a zigzag of weak homotopy equivalences
	\[\calS_{\calF,\varphi}\colon\calF(M)\overset{\simeq}{\hookleftarrow}\calF(M,\varphi)\congarrow\calF(M_\varphi,\varphi^\op)\overset{\simeq}\embeds \calF(M_\varphi).\]
\end{cor}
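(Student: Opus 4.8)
The plan is to read each arrow of the zigzag directly off the definition of parametrized codimension $c$ surgery-stability together with the collar-prolongation identification recorded above; the only genuine content is the bookkeeping of sphere dimensions and codimensions, plus a check that the various standard metrics match along common boundaries. Throughout we assume, as is implicit in the surgery discussion, that $\im(\varphi)$ lies in the interior of $M$.

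Consider first the inclusion $\calF(M,\varphi)\embeds\calF(M)$. By our abbreviation this is $\calF(M,\varphi;g^k)\embeds\calF(M)$ for the standard metric $g^k\in\calF(S^k\times D^{d-k})_{g_\circ^k+g_\circ^{d-k-1}}$ provided by parametrized codimension $c$ surgery-stability; since $k\le d-c$, that definition says exactly that this inclusion is a homotopy equivalence. For the inclusion $\calF(M_\varphi,\varphi^{\op})\embeds\calF(M_\varphi)$ I would swap the two factors and view $\varphi^{\op}$ as an embedding $S^{d-k-1}\times D^{k+1}\embeds M_\varphi$; its sphere factor $S^{d-k-1}$ has dimension $d-k-1$, and $d-k-1\le d-c$ precisely because $k\ge c-1$. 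Hence parametrized codimension $c$ surgery-stability applies once more, so this inclusion is a homotopy equivalence, with $\calF(M_\varphi,\varphi^{\op})=\calF(M_\varphi,\varphi^{\op};g^{d-k-1})$. Thus the two hypotheses $c-1\le k$ and $k\le d-c$ are exactly what is required to make the same codimension $c$ work at both ends.

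For the middle identification I would use that the reversed surgery along $\varphi^{\op}$ undoes the surgery along $\varphi$ on the nose, so that $M\setminus\im(\varphi)$ and $M_\varphi\setminus\im(\varphi^{\op})$ are literally the same compact manifold, with common new boundary $\varphi(S^k\times S^{d-k-1})$. Moreover both standard metrics restrict to the cylinder $g_\circ^k+g_\circ^{d-k-1}+\dt^2$ near that boundary sphere, since their boundary values $g_\circ^k+g_\circ^{d-k-1}$ on $S^k\times S^{d-k-1}$ and $g_\circ^{d-k-1}+g_\circ^k$ on $S^{d-k-1}\times S^k$ coincide. Applying the collar-prolongation homotopy equivalence recorded above once to $(M,\varphi,g^k)$ and once to $(M_\varphi,\varphi^{\op},g^{d-k-1})$ then identifies both $\calF(M,\varphi)$ and $\calF(M_\varphi,\varphi^{\op})$ with the single space $\calF(M\setminus\im(\varphi))_{g_\circ^k+g_\circ^{d-k-1}}$, and concatenating these gives the middle portion of the zigzag. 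With the standard metrics chosen to be cylindrical on an entire annular neighborhood of the core tubes, which is the usual choice, this middle identification is in fact an honest homeomorphism, realized by restricting a metric to $M\setminus\im(\varphi)$ and extending it back by $g^k$, respectively $g^{d-k-1}$, over the removed tube; in any case it is at least a weak homotopy equivalence, which is all the statement claims. Putting the three steps together yields $\calS_{\calF,\varphi}$.

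The step I expect to take the most care is this last one: checking on the nose that reversing the surgery returns $M$ together with the identity $M\setminus\im(\varphi)=M_\varphi\setminus\im(\varphi^{\op})$, and that $g^k$ and $g^{d-k-1}$ are compatible along the shared boundary so that the two collar-prolongation identifications glue. The remaining steps are direct invocations of the definitions.
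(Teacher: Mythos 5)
Your proof is correct and follows the same route the paper takes implicitly in the discussion preceding the corollary (the paper gives no separate proof, just the chain of identifications which you have spelled out). In particular, you correctly identify that the hypothesis $k\le d-c$ governs the left inclusion, that $c-1\le k$ becomes $d-k-1\le d-c$ after swapping factors and governs the right inclusion with standard metric $g^{d-k-1}$, and that the middle isomorphism comes from $M\setminus\im(\varphi)=M_\varphi\setminus\im(\varphi^\op)$ together with the matching boundary condition $g_\circ^k+g_\circ^{d-k-1}$ shared by $g^k$ and $g^{d-k-1}$.
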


\noindent Let $M,N$ be $(c-2)$-connected $B\ort(d+1)\scpr{c-1}$-manifolds. If there is a $(c-2)$-connected $B\ort(d+1)\scpr{c-1}$-cobordism $X\colon M\leadsto N$ with a handle decomposition $H$ consisting only of handles of indices between $c$ and $(d-c+1)$, we get a well-defined homotopy class of a homotopy equivalence $\calS_{\calF,X,H}\colon\calF(M)\to\calF(N)$. We call the map $\calS_{\calF,X,H}$ the \emph{surgery map corresponding to $(X,H)$} and we note, that it depends on the choice decomposition. Any $B\ort(d+1)\scpr{c-1}$-cobordism can be turned into a $(c-2)$-connected one by performing surgery in the interior and then admits such a handle decomposition by the handle cancellation lemma from the proof of the $h$-cobordism theorem (cf.\ \cite{smale_structure}). Thus, if $M$ and $N$ are $B\ort(d+1)\scpr{c-1}$-cobordant, we have $\calF(M)\simeq\calF(N)$. In the case of positive scalar curvature this map has been studied by the first named author in \cite{actionofmcg}.

\begin{rem}
	\begin{enumerate}
		\item Note that with our definition $\codim c$-surgery-stability obviously implies $\codim c'$-surgery-stability for every $c'\ge c$.
		\item We do not explicitly assume the existence of a metric $\tilde g^k \in \calF(D^{k+1} \times S^{d-k-1})_{g_\circ^{k} + g_\circ^{d-k-1}}$ on the opposite surgery embedding in our definition of surgery stability, the reason being that it is not required in the proof of our main result. However, such a metric exists in all of the examples we know for surgery stability or if there is the symmetric lower bound on the index $k$ of the surgery embedding.
		\item Note that for all $c\le d$ we have that $\codim c$-surgery-stability of $\calF$ implies that $g_\circ+\dt^2\in\calF(S^{d-1}\times[0,1])_{g_\circ,g_\circ}$ by \pref{Proposition}{prop:cylinders}.
	\end{enumerate}
\end{rem}

\noindent For some of the constructions later on, we will need that fixing a metric on only one disk instead of $S^0\times D^d$ also gives a homotopy equivalence. This is guaranteed by the following proposition if $\calF$ is cellular.

\begin{prop}\label{prop:gluingdisk}
	Let $\calF$ be a parametrized codimension $d$ surgery stable Riemannian functor. Let $g^{0,1}\amalg g^{0,2} = g^0\in\calF(S^{0}\times D^{d})_{g_\circ,g_\circ}$. Then for any embedding $\iota\colon D^d\embeds M$ the inclusion
	\[\calF(M,\iota)\coloneqq\calF(M,\iota;g^{0,1})\embeds \calF(M)\]
	is a weak homotopy equivalence.
\end{prop}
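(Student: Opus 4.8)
The plan is to introduce the auxiliary disk not inside $M$ but in a disjoint copy of $D^d$, so that prescribing a metric on it only constrains a product factor and the whole statement reduces to pure homotopy theory.

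Concretely: if $\calF(M)=\emptyset$ then $\calF(M,\iota)\subseteq\calF(M)$ is empty as well and the inclusion is vacuously a weak equivalence, so I may assume $\calF(M)\neq\emptyset$; note also that $\calF(D^d)\neq\emptyset$, since $g^{0,1},g^{0,2}\in\calF(D^d)$. Set $\tilde M\coloneqq M\amalg D^d$, fix an embedding $j\colon D^d\embeds\mathring{D}^d$ onto a smaller concentric disk, and form $\varphi\coloneqq\iota\amalg j\colon S^0\times D^d\embeds\tilde M$; its two disks lie in the interiors of the two components, so this is an admissible surgery embedding. Parametrized codimension $d$ surgery stability, applied to $\varphi$ together with the given metric $g^0=g^{0,1}\amalg g^{0,2}$, states that the inclusion $\calF(\tilde M,\varphi;g^0)\embeds\calF(\tilde M)$ is a homotopy equivalence.

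Now I would unwind this using the Riemannian-functor axiom $\calF(\tilde M)\cong\calF(M)\times\calF(D^d)$. Under this homeomorphism the condition $\varphi^*g=g^0$ is exactly $\iota^*g_M=g^{0,1}$ together with $j^*g_{D^d}=g^{0,2}$, so $\calF(\tilde M,\varphi;g^0)\cong\calF(M,\iota;g^{0,1})\times\calF(D^d,j;g^{0,2})$ (with the subspace topology, which for a product of subspaces is the product of the subspace topologies), and the homotopy equivalence above is precisely the product of the two inclusions $\calF(M,\iota;g^{0,1})\embeds\calF(M)$ and $\calF(D^d,j;g^{0,2})\embeds\calF(D^d)$. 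Since $\calF(\tilde M)\neq\emptyset$, surgery stability also forces $\calF(\tilde M,\varphi;g^0)\neq\emptyset$, hence both factor domains are nonempty; and a product of maps between nonempty spaces is a weak homotopy equivalence exactly when both factors are (one reads this off from $\pi_n(X\times Y)\cong\pi_n(X)\times\pi_n(Y)$ at every basepoint, the second factor supplying a basepoint for the first). Therefore $\calF(M,\iota;g^{0,1})\embeds\calF(M)$ is a weak homotopy equivalence, which is the claim; if $\calF$ is moreover cellular, Whitehead's theorem promotes this to a genuine homotopy equivalence.

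The substance of the argument is the choice to route the second surgery disk through a disjoint $D^d$ rather than through $M$ itself: the naive attempt to carve the auxiliary disk out of $M$ (so that $\iota\amalg\iota'$ embeds into $M$ and one tries a two-out-of-three argument) leads to a circular regress — one reduces the claim for $M$ to the same claim for $M$ with a disk removed, to the same claim with two disks removed, and so on — and breaking that regress would require a locality property of the surgery-stability equivalence that is not part of the hypothesis. With the disjoint-component trick there is no such obstacle: the only points to verify are the routine identifications of $\calF(\tilde M,\varphi;g^0)$ as the evident subspace of $\calF(\tilde M)$ and of the product decomposition as the one induced by the canonical homeomorphism $\calF(M)\times\calF(D^d)\cong\calF(M\amalg D^d)$, both immediate from the definition of a Riemannian functor.
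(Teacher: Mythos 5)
Your argument is correct, and it is a slight streamlining of the paper's own proof. The paper splits the claim into two halves: for surjectivity on homotopy groups it extends $\iota$ to an embedding $\varphi\colon S^0\times D^d\embeds M$ of two disjoint disks \emph{inside} $M$, so that the composite $\calF(M,\varphi)\embeds\calF(M,\iota;g^{0,1})\embeds\calF(M)$ is a weak equivalence by parametrized surgery stability and hence the outer inclusion is surjective; for injectivity it then uses exactly your product trick, but applied to $M\amalg M$ with $\varphi=\iota\amalg\iota$ in place of your $M\amalg D^d$ with $\iota\amalg j$. By routing the second surgery disk through a disjoint copy of $D^d$ you get both directions in a single application of surgery stability, and you sidestep the (minor) implicit requirement in the paper's surjectivity step that $\iota$ extend to two disjoint embedded disks in $M$. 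The underlying mechanism -- the splitting $\calF(A\amalg B)\cong\calF(A)\times\calF(B)$ read against a surgery-stability equivalence, plus the elementary fact that a product map between nonempty spaces is a weak equivalence iff both factors are -- is identical in both proofs, so this is a cleaner variant rather than a genuinely different route. One small remark: your worry about a \enquote{circular regress} from carving the auxiliary disk out of $M$ does not in fact arise in the paper, because the paper escapes it in precisely the way you do, by passing to a disjoint union and reading off a factor; it merely does so only for the injectivity half.
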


\begin{proof}
	Without loss of generality we may assume that $\calF(M)\not=\emptyset$. Let $\varphi\colon S^0\times D^d\embeds M$ be an embedding that extends $\iota$ and consider the composition
	\[\calF(M,\varphi)\embeds \calF(M,\iota;g^{0,1}) \embeds \calF(M)\]
	which is a homotopy equivalence by parametrized surgery stability. Hence the second inclusion is surjective on all homotopy groups. For injectivity on homotopy groups let $\varphi\colon S^0\times D^d\embeds M\amalg M$ denote the disjoint union of $\iota$ with itself and consider the following diagram:
	\begin{center}
	\begin{tikzpicture}
		\node (0) at (0,1.2) {$\calF(M)$};
		\node (1) at (4,1.2) {$\calF(M\amalg M)$};
		\node (11) at (6.67,1.2) {$\calF(M)\times\calF(M)$};
		\node (2) at (0,0) {$\calF(M,\iota;g^{0,1})$};
		\node (3) at (4,0) {$\calF(M\amalg M,\varphi)$};
		\node (31) at (7.8,0) {$\calF(M,\iota;g^{0,1})\times\calF(M,\iota;g^{0,2})$};

		\draw[->] (0) to (1);
		\draw[->] (2) to (0);
		\draw[->] (2) to (3);
		\draw[double equal sign distance] (3) to (31);
		\draw[double equal sign distance] (1) to (11);
		\draw[->] (3) to node[right]{$\simeq$} (1);
	\end{tikzpicture}
	\end{center}
	The horizontal maps are inclusions into the product and hence injective on homotopy groups and it follows that the inclusion $\calF(M,\iota)\embeds\calF(M)$ is injective on homotopy groups.
\end{proof}

\begin{example}\label{ex:surgerystable}
  \begin{enumerate}
  \item It is well-known by the work of \cite{gromovlawson_classification} and \cite{schoenyau} that positive scalar curvature is codimension $3$ surgery-stable on $d$-manifolds in all dimensions $d \geq 3$.
    Chernysh showed in \cite{chernysh} that it is in fact parametrized codimension $3$ surgery-stable.
  \item A similar result is true for other open curvature conditions, which satisfy a condition specified by Hoelzel in \cite{hoelzel}.
    This includes curvature conditions such as positive $p$-curvature and $k$-positive Ricci curvature, which are codimension $p + 3$ (resp.\ $\max\{3, d-k+2\}$) surgery-stable on $d$-manifolds for $d \geq 3$.
    By work of the second named author \cite{kordass} these conditions are in fact parametrized surgery-stable with the same codimension restriction.
  \item The condition $\sec < 0$ gives rise to a Riemannian functor, which is codimension $2$ surgery stable on $2$-manifolds.
  \item The Riemannian functor, which assigns to a manifolds its metrics that are simultaneously conformally flat and have $\scal \geq 0$ is codimension $d$ surgery stable on $d$-manifolds (cf.\ \cite[Theorem 6.3]{hoelzel}).
  \end{enumerate}
\end{example}

\begin{rem}\label{rem:dk-ricci}
	Since we want the codimension restriction arising from surgery-stability to be independent of the dimension, we choose to replace $k$-positive Ricci curvature by  $(d-k)$-positive Ricci curvature, which is parametrized codimension $\max\{3,k+2\}$-surgery stable.
\end{rem}

\subsection{Fibrancy}\label{sec:fibrant}

In order to compare spaces of metrics on manifolds with different dimensions, we need the restriction map $\res\colon \calF(M)\to\calR(\partial M)$ to satisfy the properties from the following definition.

\begin{defn}\label{defn:fibrant}
  A Riemannian functor $\calF$ is called \emph{fibrant} if
  \begin{enumerate}
  \item $\res(\mathcal F(M)) \subset \mathcal F(\partial M)$ for all $M \in \mfd$ with $\partial M\not=\emptyset$  and
	\item the restriction map $\res\colon\calF(M)\to\calF(\partial M)$ is a Serre-fibration.
  \end{enumerate}
\end{defn}

\noindent The Riemannian functor given by positive scalar curvature is fibrant. This was shown utilizing the method we generalize here in \cite{ebertfrenck}.

\begin{prop}\label{prop:cylinders}
  A Riemannian functor $\mathcal F$ satisfies (1) in the above definition if and only if for every closed $N \in \mfd$ and every $g \in \mathcal R(N)$ with $g + \mathrm d t^2 \in \mathcal F(N \times [0,1])$ we have $g \in \mathcal F(N)$.
\end{prop}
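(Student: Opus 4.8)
The plan is to prove the two implications separately; both are short, resting only on $\calF$ being a contravariant functor on $\codim 0$-embeddings together with the disjoint-union axiom of \pref{Definition}{defn:Riemannian-functor}.

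For the forward implication I would assume $\res(\calF(M))\subset\calF(\partial M)$ for every $M$ with non-empty boundary, and, given a closed $N$ and $g\in\calR(N)$ with $g+\dt^2\in\calF(N\times[0,1])$, apply this to $M:=N\times[0,1]$, whose boundary is $N\amalg N$. The metric $g+\dt^2$ is of product form near $\partial M$, hence genuinely an element of $\calR(N\times[0,1])$, and its restriction to each of the two boundary copies of $N$ equals $g$; so $\res(g+\dt^2)=(g,g)\in\calF(N\amalg N)$. Since $\calF$ is Riemannian, the canonical homeomorphism $\calR(N)\times\calR(N)\to\calR(N\amalg N)$ restricts to $\calF(N)\times\calF(N)\to\calF(N\amalg N)$, so $(g,g)\in\calF(N\amalg N)$ forces $g\in\calF(N)$. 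That is the whole argument for this direction.

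For the backward implication I would assume the cylinder condition, fix $M$ with non-empty boundary and $g\in\calF(M)$, and show $g|_{\partial M}=\res(g)\in\calF(\partial M)$. As $g\in\calR(M)$, there is $\varepsilon>0$ with $c^*g=g|_{\partial M}+\dt^2$ on $\partial M\times[0,\varepsilon)$ for the fixed collar $c$. Restricting $c$ to the compact submanifold $\partial M\times[0,\varepsilon/2]$ yields a $\codim 0$-embedding $\partial M\times[0,\varepsilon/2]\embeds M$ along which $g$ pulls back to the product metric $g|_{\partial M}+\dt^2$, so by functoriality this product metric lies in $\calF(\partial M\times[0,\varepsilon/2])$. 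Identifying $\partial M\times[0,\varepsilon/2]$ with $\partial M\times[0,1]$ by rescaling the interval (and matching collars) carries this to the product metric $g|_{\partial M}+\dt^2\in\calF(\partial M\times[0,1])$, and the cylinder condition, applied with $N:=\partial M$, then gives $g|_{\partial M}\in\calF(\partial M)$.

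I do not anticipate a genuine difficulty here; the only point requiring care is the rescaling step at the end of the backward implication, namely that pulling $g$ back along the (rescaled) piece of the collar really produces an element of $\calF(\partial M\times[0,1])$ of the required product form near the boundary. The diffeomorphism $\partial M\times[0,\varepsilon/2]\cong\partial M\times[0,1]$ does not literally preserve the $\dt^2$-coefficient of a product metric, so one should either track this (harmless) rescaling through the boundary-collar conventions or read the cylinder condition as insensitive to the length of the interval — which it is for the Riemannian functors of interest. This is bookkeeping rather than a substantive obstacle.
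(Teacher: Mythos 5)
Your argument is the same as the paper's in both directions: for the forward implication you restrict $g+\dt^2$ to the boundary $N\amalg N$ of $N\times[0,1]$ and extract $g$ from $(g,g)$ via the disjoint-union axiom of a Riemannian functor, and for the backward implication you pull $g$ back along a piece of the collar into $\calF(\partial M\times[0,1])$ and apply the cylinder condition — exactly the two steps the paper carries out. The collar-length bookkeeping you flag at the end is a genuine subtlety, but the paper's own proof commits to the same identification silently (it simply asserts the existence of a $\codim 0$-embedding $[0,1]\times\partial M\embeds M$ along which $g$ pulls back to $\res(g)+\dt^2$, which, taken literally, requires the product region of the collar to have length at least $1$); so you are at the source's level of rigor and merely more upfront about where the imprecision sits.
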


\begin{proof}
  Let $N \in \mfd$ be closed with $g \in \calR(N)$ such that $g + \mathrm d t^2 \in \mathcal F(N \times [0,1])$. For $\res \colon \calF(N \times [0,1]) \to \calF(N \coprod N) = \calF(N) \times \calF(N)$ we get that $g = \pr_{\calF(N)}(\res(g + \mathrm d t^2))\in\calF(N)$.

  Now let $M \in \mfd$ with $\partial M \neq \emptyset$ and let $g \in \mathcal F(M)$. Since we assumed $M$ to be collared and the metric to be cylindrical in a neighborhood of the boundary, there is a $\codim 0$-embedding $c\colon [0,1]\times\partial M\embeds M$ such that $c^*g=\res(g) + \dt^2\in\calF(\partial M\times[0,1])$. By assumption, this implies that $\res(g)\in\calF(\partial M)$
\end{proof}

\noindent Let $\mathcal F$ be a Riemannian functor. For every closed manifold $N \in \mfd$ we have a continuous stabilization map
\begin{align*}
  \stab \colon \mathcal R(N) \to \mathcal R(N \times [0,1]),
  \quad
  g \mapsto g + \mathrm d t^2.
\end{align*}
The following is a criterion for curvature conditions for which $\calR_C$ is fibrant.

\begin{thm}\label{thm:fibrant}
	Let $C = \{C_d\}_{d \ge d_C}$ with $C_d \subset \AlgCurvOp[d]$ be a sequence of open curvature conditions. Let us assume that $\stab(\calR_C(N)) \subset \calR_C(N \times [0,1])$ for all closed $N \in \mfd$ and $\res(\calR_C(M))\subset\calR_C(\partial M)$ for all $M \in \mfd$ with $\partial M \neq \emptyset$. Then $\calR_C$ is fibrant.
\end{thm}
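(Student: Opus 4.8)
The plan is to verify the two conditions of fibrancy (\pref{Definition}{defn:fibrant}). Condition (1) is immediate: it is exactly the hypothesis $\res(\calR_C(M)) \subset \calR_C(\partial M)$, so nothing is to be done (alternatively, one could invoke \pref{Proposition}{prop:cylinders} together with the stabilization hypothesis, but the restriction hypothesis already gives it directly). The real content is condition (2): that $\res\colon \calR_C(M) \to \calR_C(\partial M)$ is a Serre fibration for every $M$ with $\partial M \neq \emptyset$.

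For condition (2), the strategy is a standard lifting argument exploiting the collar. First I would recall that without any curvature condition the restriction map $\res\colon \calR(M) \to \calR(\partial M)$ admits a section-like behaviour: given a metric $g \in \calR(M)$ and a path (or cube) of metrics $h_s \in \calR(\partial M)$ starting at $h_0 = \res(g)$, one can produce a path $g_s \in \calR(M)$ with $g_0 = g$ and $\res(g_s) = h_s$, by modifying $g$ only inside the collar $c\colon \partial M \times [0,1) \embeds M$. Concretely, pick a smooth cutoff $\chi\colon [0,1) \to [0,1]$ with $\chi \equiv 1$ near $0$ and $\chi \equiv 0$ near some $\varepsilon$, and interpolate: on the collar set the metric to be (roughly) $\big(\chi(t) h_s + (1-\chi(t)) h_0\big) + \dt^2$ near $t=0$, glued to $g$ outside. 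This gives a continuous lift of the homotopy, i.e. it shows $\res\colon \calR(M) \to \calR(\partial M)$ is a fibration. Because the curvature condition $C$ is \emph{open}, and because the modification happens in an arbitrarily thin collar where the metrics are of product form $h_s + \dt^2$ — whose curvature operator at a point only involves the curvature of $h_s$ on $\partial M$, which lies in $C_{d-1}$ by hypothesis — one checks that the product metric $h_s + \dt^2$ lies in $C_d$ (this is precisely the content of the stabilization hypothesis $\stab(\calR_C(N)) \subset \calR_C(N\times[0,1])$ applied on $N = \partial M$). So inside the thin collar the lifted metric satisfies $C_d$; outside the collar it equals $g$, which satisfies $C_d$. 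The delicate region is the overlap where $\chi$ transitions, since there the metric is not a product and its curvature operator is a genuine perturbation.

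The main obstacle is exactly that transition region: one must argue that if the collar is taken thin enough (and the cutoff $\chi$ has controlled derivatives, rescaled appropriately), the curvature operator of the interpolated metric stays in the open set $C_d$. The key point is that as the collar width $\varepsilon \to 0$, after reparametrizing $t \mapsto t/\varepsilon$ the interpolated metric $C^\infty$-converges (on compact sets, in the rescaled coordinate) to a product metric $h_s + \dt^2$ — so its curvature operator converges to that of the product, which lies in the open condition $C_d$; openness then gives an $\varepsilon$ for which the whole family stays in $C_d$. One has to do this uniformly in the homotopy parameter $s$ ranging over a compact cube $[0,1]^n$ (and with the given initial metric $g$ fixed), which is fine by compactness: a single $\varepsilon$ works for the whole cube. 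Assembling these pieces — the explicit collar interpolation formula, the rescaling/convergence estimate, and the compactness argument for uniformity — yields a continuous solution to the lifting problem
\[
\begin{tikzpicture}[baseline=(current bounding box.center)]
\node (a) at (0,1.5) {$[0,1]^n \times \{0\}$};
\node (b) at (4.5,1.5) {$\calR_C(M)$};
\node (c) at (0,0) {$[0,1]^n \times [0,1]$};
\node (d) at (4.5,0) {$\calR_C(\partial M)$};
\draw[->] (a) to (b);
\draw[->] (a) to (c);
\draw[->] (b) to node[auto]{$\res$} (d);
\draw[->] (c) to (d);
\draw[->, dashed] (c) to (b);
\end{tikzpicture}
\]
for all $n \ge 0$, which is precisely the statement that $\res$ is a Serre fibration. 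This completes the verification of both conditions, hence $\calR_C$ is fibrant.
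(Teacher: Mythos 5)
There is a genuine gap in the key step of your argument for condition (2), and the direction of your rescaling heuristic is backwards.

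You propose to put the metric $(\chi(t)\,h_s + (1-\chi(t))\,h_0) + \dt^2$ in the collar, with $\chi$ transitioning on a short interval $[0,\varepsilon]$, and claim that as $\varepsilon \to 0$ the curvature of this metric converges to the curvature of the product $h_s + \dt^2$. This is false. The curvature operator of a metric of the form $g_{\chi(t)} + \dt^2$ contains terms involving $\chi'$ (from the second fundamental form of the $\{t = \text{const}\}$ slices) and $\chi''$; this is exactly what \pref{Lemma}{lem:computation-gajer} records. If the transition takes place over $[0,\varepsilon]$, then $\chi' \sim \varepsilon^{-1}$ and $\chi'' \sim \varepsilon^{-2}$, so these error terms \emph{blow up} as $\varepsilon \to 0$ rather than vanish. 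The rescaling $t \mapsto t/\varepsilon$ does not help: it scales the $\dt^2$-part to $\varepsilon^2\, \mathrm d\tau^2$, so the rescaled metric degenerates and its curvature still diverges. The correct mechanism is the opposite one: keep the collar width fixed, pull back along a diffeomorphism $\phi\colon[0,\delta]\to[0,b]$ with $b$ large, and transition via a function $f$ on $[0,b]$ with $|f'|,|f''|\le\Lambda$ as small as one likes. Because pullback by a diffeomorphism preserves curvature, the estimate in \pref{Lemma}{lem:gajer} applies verbatim, and openness of $C_d$ closes the argument. Your formula, which omits the $(\phi')^2$ scaling of $\dt^2$ that would come from a genuine stretch, does not produce a metric isometric to the one on the long cylinder, and the claimed convergence fails.

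A secondary issue: the straight-line interpolation $r\,h_s + (1-r)\,h_0$ need not stay in $\calR_C(\partial M)$ — the hypotheses only require $C_d$ to be open, not convex — so even the "endpoints" of the slices in your collar need not satisfy the curvature condition, and the stabilization hypothesis cannot be applied to them. The paper avoids this by constructing (in \pref{Lemma}{lem:path-to-family}) a family $C(p,s,t)$ lying in $\calF(\partial M)$ which follows the given homotopy itself, using that small convex perturbations of a point in the open set $\calF(\partial M)$ remain in it. Your overall decomposition (condition (1) is immediate from the hypotheses; condition (2) is the content, handled by a collar interpolation) does match the paper's, but the estimate that makes the collar interpolation work is not a soft convergence statement and requires both the long-cylinder pullback and the explicit computation of \pref{Lemma}{lem:computation-gajer}.
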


\noindent Before diving into the proof, let us give the consequences most important to us.

\begin{prop}\label{prop:fibrant}
 Both positive $p$-curvature and $(d-k)$-positive Ricci curvature are fibrant.
\end{prop}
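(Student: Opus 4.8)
The plan is to verify the hypotheses of Theorem~\ref{thm:fibrant} for the two sequences of curvature conditions $C_d := (p\text{-curv}>0)_d$ and $C_d := ((d-k)\text{-pos Ric})_d$. Both sequences consist of open curvature conditions (indeed open convex cones, as recorded after the list of examples), so what remains is to check the two closure properties: (a) stabilization preserves the condition, i.e.\ $\stab(\calR_C(N))\subset\calR_C(N\times[0,1])$ for every closed $N$, and (b) restriction preserves the condition, i.e.\ $\res(\calR_C(M))\subset\calR_C(\partial M)$ for every $M$ with $\partial M\neq\emptyset$. By Proposition~\ref{prop:cylinders}, property (b) is equivalent to: whenever $g+\dt^2\in\calR_C(N\times[0,1])$ for a closed $N$, then $g\in\calR_C(N)$. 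So in fact both (a) and (b) reduce to understanding how the curvature operator of a product metric $g+\dt^2$ on $N\times[0,1]$ relates to that of $g$ on $N$.

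The key computational input is the standard fact that for a Riemannian product $(N\times[0,1], g+\dt^2)$, the curvature operator splits: writing $\partial_t$ for the unit vector in the $[0,1]$-direction, every sectional curvature involving $\partial_t$ vanishes, and the remaining sectional curvatures agree with those of $(N,g)$. Concretely, at a point $(x,t)$, choosing an orthonormal basis $(E_1,\dots,E_{d-1},\partial_t)$ of $\rmT_{(x,t)}(N\times[0,1])$ with $(E_1,\dots,E_{d-1})$ orthonormal in $\rmT_xN$, the algebraic curvature operator $R_{(x,t)}\in\AlgCurvOp[d]$ is the image of $R_x\in\AlgCurvOp[d-1]$ under the natural inclusion $\AlgCurvOp[d-1]\hookrightarrow\AlgCurvOp[d]$ induced by $\bbE^{d-1}\hookrightarrow\bbE^d$ (padding by zeros). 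So I would first record a small lemma: for both conditions, $R\in\AlgCurvOp[d-1]$ lies in $C_{d-1}$ if and only if its image in $\AlgCurvOp[d]$ lies in $C_d$. For positive $p$-curvature this is transparent from the formula $s_p(P_x)=\sum_{i,j=1}^{d-p}\sect(E_i,E_j)$ with $\{E_i\}$ an orthonormal basis of $P_x^\perp$: the "$\Leftarrow$" direction (needed for (b)) follows by taking $p$-planes $P$ in $\bbE^{d-1}\subset\bbE^d$, noting $\partial_t\in P^\perp$ and that terms involving $\partial_t$ contribute $0$; the "$\Rightarrow$" direction (needed for (a)) requires checking that inserting $\partial_t$ into $P^\perp$ and running over all $p$-planes $P\le\bbE^d$ never produces a non-positive value, which again follows because sectional curvatures touching $\partial_t$ vanish, so $s_p(R_{(x,t)})(P)$ equals $s_p(R_x)(\pi(P))$ for the relevant projection — one must be a little careful that $\dim\pi(P)$ could drop, but monotonicity "positive $p$-curvature implies positive $(p-1)$-curvature" (also recorded in the excerpt) handles that degenerate case. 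For $(d-k)$-positive Ricci curvature the same reasoning applies using that $\ric(R_{(x,t)})(\partial_t)=0$ and $\ric(R_{(x,t)})(E_i)=\ric(R_x)(E_i)$: one checks that summing the $(d-1-k) = ((d-1)-k)$ smallest Ricci eigenvalues controls, via the variational "min over orthonormal $(d-k)$-frames" description, the corresponding sum on $N\times[0,1]$, the new zero eigenvalue in the $\partial_t$ direction only helping in one direction and being handled by the analogue of the $p$-curvature monotonicity in the other.

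With this lemma in hand, the proof is short: property (a) is the "$\Rightarrow$" direction applied pointwise on $N\times[0,1]$ (at each point the curvature operator of $g+\dt^2$ is the zero-padding of that of $g$, which lies in $C_{d-1}$, hence in $C_d$); property (b) via Proposition~\ref{prop:cylinders} is the "$\Leftarrow$" direction, again pointwise. Then Theorem~\ref{thm:fibrant} applies verbatim and yields that $\calR_{p\text{-curv}>0}$ and $\calR_{(d-k)\text{-pos Ric}}$ are fibrant. The main obstacle is purely bookkeeping: getting the index ranges and the degenerate cases (where a $p$-plane or a $(d-k)$-frame in $\bbE^d$ fails to project injectively to $\bbE^{d-1}$, so that a $\partial_t$-direction is forced into it) handled cleanly — here the monotonicity of the conditions in their defining integer parameter, already noted in Section~\ref{sec:curvatureconditions}, is exactly what saves the argument, so I would state and use that monotonicity explicitly rather than re-deriving it.
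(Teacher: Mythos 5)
Your overall strategy matches the paper's: reduce Proposition~\ref{prop:fibrant} to the hypotheses of Theorem~\ref{thm:fibrant}, observe (via Proposition~\ref{prop:cylinders}) that both the stabilization and the restriction properties boil down to understanding how the curvature of the product metric $g+\dt^2$ relates to that of $g$, and use the zero-padding description of the curvature operator together with the monotonicity of the conditions in their integer parameter. The restriction direction and the Ricci case are fine, and the paper does essentially the same thing there.

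There is, however, a real gap in your treatment of the stabilization direction for positive $p$-curvature, which is exactly where the paper's \pref{Lemma}{lem:pstable} does the actual work. You claim that $s_p(R_{(x,t)})(P)=s_p(R_x)(\pi(P))$ ``because sectional curvatures touching $\partial_t$ vanish,'' with the only danger being a dimension drop of $\pi(P)$. This identity is false in the generic case. If $P\le\bbE^d$ is such that $\partial_t\notin P$ and $\partial_t\notin P^\perp$, then $P^\perp$ has an orthonormal basis $(F_1,\dots,F_{d-p-1},v)$ with $F_i\in\bbE^{d-1}$ and $v=\sin\phi\,X+\cos\phi\,\partial_t$ a genuinely mixed vector ($\sin\phi,\cos\phi\neq0$). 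The terms $\sect(F_i,v)=\sin^2\phi\,\sect_{R_x}(F_i,X)$ do \emph{not} vanish, so one gets
\[
s_p(R_{(x,t)})(P)=a+2\sin^2\phi\cdot b,\qquad a=\sum_{i,j}\sect_{R_x}(F_i,F_j),\quad b=\sum_i\sect_{R_x}(F_i,X),
\]
which is a strict convex combination $(1-\sin^2\phi)\,a+\sin^2\phi\,(a+2b)$ of the $s_p$-value $a=s_p(R_x)(\pi(P))>0$ and the $s_{p-1}$-value $a+2b>0$. So the implication ``positive $p$-curvature $\Rightarrow$ positive $(p-1)$-curvature'' is not a patch for a degenerate dimension-drop case; it is the essential input in the \emph{generic} case, via this interpolation. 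This is precisely what the paper's Lemma~\ref{lem:pstable} establishes with its Case~1/Case~2 split and the final ``case distinction on the sign of $b$.'' To fix your write-up you would need to replace the claimed identity by the convex-combination formula above and run the sign argument; as written, the lemma you propose to ``record'' is not true and its proof sketch would not go through.
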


\begin{proof}
	It remains to show that $g$ has positive $p$-curvature (resp. $(d-k)$-positive Ricci curvature) if and only if $g+\dt^2$ has positive $p$-curvature (resp. $(d+1-k)$-positive Ricci curvature).

	If $g$ has positive $p$-curvature, then the $p$-curvature of $g+\dt^2$ is positive by the computation in \pref{Lemma}{lem:pstable}. Now let $g+\dt^2$ have positive $p$-curvature and let $P\subset T_xM$ be a $p$-dimensional subspace. Then there is an orthonormal basis $(\partial_t,\dots, E_{d+1-p})$ of $P^\perp$ in $T_{(x,t)}M\times[0,1]$ and we can compute
	\[s_{p,g}(P) = s_{p,g+\dt^2}(P) - \sum_{i=2}^{d+1-k}\underbrace{\sec(\partial_t,E_i)}_{=0}>0.\]
	\noindent Concerning $(d-k)$-positive Ricci we note that the eigenvalue of the Ricci curvature corresponding to $\partial_t$ equals $0$, the first sum of the first $(d+1-k)$ eigenvalues of $\ric(g+\dt^2)$ is positive if and only if the sum of the first $(d-k)$-eigenvalues of $\ric(g)$ is positive.
\end{proof}

\noindent We have the following observation:

\begin{lem}\label{lem:gajer}
	Under the assumptions in \pref{Theorem}{thm:fibrant}, $\calR_C(M)$ satisfies the following: For every closed manifold $N^{d-1} \in \mfd$ we have:
  \begin{enumerate}
    \setcounter{enumi}{\value{temporaryResumeCounter}}
  \item For every smooth path of Riemannian metrics $\{g_r\}_{r \in [0,1]} \subset \mathcal R(N)$ with $g_r + \mathrm d t^2 \in \mathcal F(N \times [0,1])$ for all $r \in [0,1]$, there exists a $0 < \Lambda \leq 1$ such that  we have $g_{f(t)} + \mathrm d t^2 \in \mathcal F(N \times [0,1])$ for every function $f \colon \mathbb R \to [0,1]$ that is constant near $0$ and $1$ and satisfies $|f'|, |f''| \leq \Lambda$
  \item Additionally, $\Lambda$ can be chosen depending continuously on the family $\{g_r\}$.
  \end{enumerate}
\end{lem}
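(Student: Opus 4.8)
The plan is to exploit that the curvature operator of a metric at a point is a universal continuous function of its $2$-jet there, combined with the compactness of $N\times[0,1]$ and the openness of the conditions $C_d$ (recall $\calF=\calR_C$). Write $G_f\coloneqq g_{f(t)}+\dt^2$ for the corresponding metric on $N\times[0,1]$; since $f$ is constant near $0$ and near $1$, $G_f$ is of product form near $N\times\{0,1\}$, hence lies in $\calR(N\times[0,1])$. Fix a finite atlas of $N$ with associated local orthonormal frame fields (obtained by Gram--Schmidt), and on $N\times[0,1]$ use the product charts with frames extended by $\partial_t$. In such a chart, at a point $(x,t_0)$, the components of $G_f$ and all their partial derivatives of order $\le2$ coincide with those of the \emph{frozen} product metric $g_{f(t_0)}+\dt^2$, with the sole exceptions
\[
\partial_t(G_f)_{ij},\qquad \partial_k\partial_t(G_f)_{ij},\qquad \partial_t^2(G_f)_{ij}\quad(i,j,k\text{ tangent to }N),
\]
which at $(x,t_0)$ equal $f'(t_0)(\dot g_{f(t_0)})_{ij}$, $f'(t_0)\partial_k(\dot g_{f(t_0)})_{ij}$ and $f''(t_0)(\dot g_{f(t_0)})_{ij}+f'(t_0)^2(\ddot g_{f(t_0)})_{ij}$, where $\dot g_r\coloneqq\partial_r g_r$. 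In particular the $0$-jets agree exactly, so positive-definiteness is never in question, and whenever $\Lambda\le1$ and $|f'|,|f''|\le\Lambda$ the $2$-jet of $G_f$ at $(x,t_0)$ differs in the chart from that of $g_{f(t_0)}+\dt^2$ by at most $C\cdot\Lambda$, where $C=C(\{g_r\})$ is a constant determined by uniform bounds on $\dot g_r$, $\ddot g_r$ and the first tangential derivatives of $\dot g_r$ over the compact set $N\times[0,1]$. Note that $C$ depends continuously on $\{g_r\}$ in the $C^\infty$-topology, being a supremum over a fixed compact set of a $C^\infty$-continuous quantity.

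Let $\mathcal R$ be the universal smooth map sending the $2$-jet of a positive-definite metric in a chart to its curvature operator in the associated orthonormal frame. By hypothesis the set $\calK(\{g_r\})\coloneqq\{R_{g_r+\dt^2}(x):r\in[0,1],\ x\in N\}$, a compact subset of $C_d$ (being the continuous image of $N\times[0,1]$), admits an $\eta$-neighborhood inside $C_d$: since $C_d$ is open and $\ort(d)$-invariant and $\ort(d)$ is compact, one may take $\eta=\eta(\{g_r\})=\tfrac12\,\mathrm{dist}\bigl(\calK(\{g_r\}),\AlgCurvOp[d]\setminus C_d\bigr)>0$, which depends continuously on $\{g_r\}$ because $\calK(\{g_r\})$ varies continuously in the Hausdorff metric and distance to a fixed closed set is $1$-Lipschitz there. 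Choosing a local Lipschitz constant $L=L(\{g_r\})$ for $\mathcal R$ on a compact neighborhood of the relevant jets, put $\Lambda\coloneqq\min\{1,\ \eta/(LC)\}>0$, continuous in $\{g_r\}$. Then for every admissible $f$ and every $(x,t_0)\in N\times[0,1]$ the curvature operator of $G_f$ at $(x,t_0)$ lies within $\eta$ of $\calK(\{g_r\})$, hence in $C_d$, so $G_f\in\calF(N\times[0,1])$; this proves (5), and the continuous dependence of $C$, $\eta$, and therefore $\Lambda$, on the family proves (6).

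No curvature-condition-specific computation (such as the one in \pref{Lemma}{lem:pstable}) enters; the only real work is bookkeeping. The estimate \enquote{$2$-jets close $\Rightarrow$ curvature operators close} must be made uniform over the a priori non-parallelizable manifold $N$, which is handled by the finite atlas with fixed frame fields together with the $\ort(d)$-invariance of $C_d$; and the chained choices $C\rsto\eta\rsto\Lambda$ must each be checked to be continuous in $\{g_r\}$, which follows from the stability of suprema over a fixed compact set and of Hausdorff distances under small $C^\infty$-perturbations. This mild uniformity issue is the main obstacle.
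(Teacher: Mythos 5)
Your proof is correct but reaches the key estimate by a genuinely different route than the paper. The paper delegates the crux to an explicit Gajer-style curvature computation (\pref{Lemma}{lem:computation-gajer}): working with the Gauss equation, the Koszul formula and Christoffel-symbol bookkeeping, it derives the pointwise expansion
\[
R_{(N \times \bbR,\, g_{f(t)} + \mathrm{d}t^2)}\big|_{(x,t_0)} = R_{(N \times \bbR,\, g_{f(t_0)} + \mathrm{d}t^2)} + O(|f'|)\,E_1 + O(|f'|^2)\,E_2 + O(|f''|)\,E_3,
\]
and then quotes openness of $C_d$ to produce $\Lambda$. You instead observe that the curvature operator (in an orthonormal frame obtained by Gram--Schmidt from the coordinate frame) is a universal smooth, hence locally Lipschitz, function of the $2$-jet of the metric, and that the $2$-jets of $g_{f(t)}+\mathrm{d}t^2$ and of the frozen product $g_{f(t_0)}+\mathrm{d}t^2$ at $(x,t_0)$ agree to order zero and differ only in the $t$-derivative slots by quantities of size $O(|f'|)$, $O(|f'|^2)$, $O(|f''|)$ with coefficients bounded by the path data. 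This sidesteps the Gauss-equation computation of \pref{Lemma}{lem:computation-gajer} entirely: the paper buys explicit error terms $E_1, E_2, E_3$, while you buy brevity and robustness by reducing to the (standard) fact that curvature is a smooth function of the $2$-jet. Your argument is also more explicit than the paper's about the compactness and continuity chain (the quantity $C$, the distance $\eta$, the Lipschitz constant $L$, and how each depends continuously on $\{g_r\}$), where the paper simply asserts that $\Lambda$ can be found and chosen continuously without spelling out the uniformity. Both arguments use only that $C_d$ is open and $\ort(d)$-invariant, not the $\stab$/$\res$ hypotheses of \pref{Theorem}{thm:fibrant}, consistent with the paper.
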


\begin{proof}
	  We obtain (1) immediately from a computation similar to \cite[p.184]{gajer} (cf.\ \pref{Lemma}{lem:computation-gajer}), which yields the following correspondence between curvature tensors:
  \begin{align*}
    R_{(N \times \bbR, g_{f(t)} + \mathrm{d} t^2)}|_{(x,t_0)} & = R_{(N \times \bbR, g_{f(t_0)} + \mathrm{d} t^2)}\\
                                                              & \qquad  + O(|f'|) E_1 + O(|f'|^2) E_2 + O(|f''|) E_3,\nonumber
  \end{align*}
  where $E_1, E_2, E_3$ only depend on the path $\{g_r\}_{r \in [0,1]}$ and its derivatives in $r$-direction.
  Since $C$ is an open subset in $\mathcal C_{\mathrm B}(\mathbb E^n)$, we find $\Lambda$ accordingly. This also reveals that $\Lambda$ can be chosen continuously and thus implies (2).
\end{proof}

\begin{remark}\leavevmode
	\begin{enumerate}
		\item The proof of \pref{Theorem}{thm:fibrant} indeed shows the following:
	\end{enumerate} 
	\vspace{3pt}
		\begin{changemargin}{1cm}{1cm}
		{\noindent If $\calF$ is an open Riemannian functor satisfying the two properties from \pref{Lemma}{lem:gajer} together with the property that $\stab(\calF(N)) \subset \calF(N \times [0,1])$ for all closed $N \in \mfd$ and $\res(\calF(M))\subset\calF(\partial M)$ for all $M \in \mfd$ with $\partial M \neq \emptyset$, then $\calF$ is fibrant. }
\end{changemargin}
	\vspace{3pt}
	\begin{enumerate}
		\item[]	However, since the examples we are interested in are all given by curvature conditions, we decided to simplify the statement our criterion by only considering subspaces given by curvature conditions.
		\item[(2)] Given a path $\{g_r\}_{r \in [0,1]}$ with $g_r + \mathrm d t^2 \in \mathcal F(N \times [0,1])$ for every $r \in [0,1]$, (1) from \pref{Lemma}{lem:gajer} implies the existence of a metric $G\in\calF(N\times[0,1])_{g_0,g_1}$
	\end{enumerate}
\end{remark}

\noindent Let us now turn to the proof of \pref{Theorem}{thm:fibrant}. The following lemma and its proof are adaptations from \cite[Lemma 5.1]{ebertfrenck} to a more general setting. It constructs a family of paths from a path of metrics, which stops at any particular point.

\begin{lem}\label{lem:path-to-family}
  Let $\mathcal F$ be an open Riemannian functor.
  Let $N^{d-1}$ be a closed manifold, $P$ be a compact topological space and let $G \colon P \times [0,1] \to \mathcal F(N)$ be a continuous map.
  Then there exists a continuous map
  \begin{align*}
    C \colon P \times [0,1]^2 \to \mathcal F(N),
    \quad
    (p,s,t) \mapsto C(p,s,t).
  \end{align*}
  with the properties
  \begin{enumerate}
  \item $\{C(p,s,t)\}_{t \in [0,1]}$ is a smooth path of metrics for every $(p,s) \in P \times [0,1]$,
  \item $C(p,0,t) = G(p,0)$ for all $(p,t) \in P \times [0,1]$,
  \item $C(p,s,0) = G(p,0)$ for all $(p,s) \in P \times [0,1]$,
  \item $C(p,s,1) = G(p,s)$ for all $(p,s) \in P \times [0,1]$.
  \end{enumerate}
  If, additionally, $\mathcal F = \calR_C$ satisfies the assumptions of \pref{Theorem}{thm:fibrant}, then there exists $0 < \Lambda \leq 1$ such that for every function $f \colon \mathbb R \to [0,1]$ with $|f'|, |f''| \leq \Lambda$ we have $C(p, s, f(t)) + \mathrm d t^2 \in \mathcal F(N \times [0,1])$.
\end{lem}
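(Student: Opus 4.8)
The plan is to follow the blueprint of \cite[Lemma~5.1]{ebertfrenck}: a continuous family of paths can be made fibrewise smooth by a $C^\infty$-small perturbation, and a $C^\infty$-small perturbation does not leave the open set $\calF(N)$. Concretely, I would first replace $G$ by a map $\bar G\colon P\times[0,1]\to\calF(N)$ that is \emph{smooth in the second variable}, still satisfies $\bar G(p,0)=G(p,0)$, and is $C^\infty$-close enough to $G$ that for every $(p,s)$ the linear segment from $\bar G(p,s)$ to $G(p,s)$ lies in $\calF(N)$ — and is the constant path $G(p,0)$ when $s=0$. Given such a $\bar G$, I would build $C$ by first running along $\bar G(p,\cdot)$, then pausing, then interpolating linearly back to $G(p,s)$, with each stage reparametrised by smooth cut-offs so that the resulting $t$-path is smooth.

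To produce $\bar G$: since $P\times[0,1]$ is compact and $\calF(N)$ is open in the $C^\infty$-topology on symmetric $(0,2)$-tensors over the closed manifold $N$, a standard covering argument gives an integer $k_0$ and a $\delta>0$ so that any tensor whose $C^{k_0}$-distance to some point of $K\coloneqq G(P\times[0,1])$ is less than $\delta$ lies in $\calF(N)$; since $\|\cdot\|_{C^{k_0}}$ is a seminorm, any linear segment between two tensors that are both $C^{k_0}$-within $\delta$ of one and the same point of $K$ stays in $\calF(N)$. Extend $G$ to $P\times\bbR$ constantly outside $[0,1]$, mollify in the $s$-variable to get $\tilde G_\epsilon(p,s)\coloneqq\int\phi_\epsilon(u)G(p,s-u)\,\mathrm du$, which is smooth in $s$ with $s$-derivatives continuous in $(p,s)$ and which, by uniform continuity of $G$ on the compact set, satisfies $\|\tilde G_\epsilon(p,s)-G(p,s)\|_{C^{k_0}}\to0$ uniformly as $\epsilon\to0$; then set $\bar G(p,s)\coloneqq(1-\psi(s))G(p,0)+\psi(s)\tilde G_\epsilon(p,s)$ for a smooth $\psi\colon[0,1]\to[0,1]$ with $\psi\equiv0$ near $0$ and $\psi\equiv1$ on $[\epsilon_0,1]$. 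For $\epsilon,\epsilon_0$ small, $\bar G(p,s)$ is for every $(p,s)$ a convex combination of tensors that are $C^{k_0}$-within $\delta$ of $G(p,s)\in K$ (using uniform continuity of $G$ to handle $s\le\epsilon_0$), hence $\bar G(p,s)\in\calF(N)$, and likewise the segment from $\bar G(p,s)$ to $G(p,s)$ lies in $\calF(N)$; and $\bar G$ is smooth in $s$ with $\bar G(p,0)=G(p,0)$.

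Now choose smooth monotone $\chi_1,\chi_2\colon[0,1]\to[0,1]$, each $\equiv0$ near $0$ and $\equiv1$ near $1$, and define
\[ C(p,s,t)=\begin{cases}\bar G\bigl(p,\,s\,\chi_1(3t)\bigr), & 0\le t\le\tfrac13,\\ \bar G(p,s), & \tfrac13\le t\le\tfrac23,\\ \bigl(1-\chi_2(3t-2)\bigr)\bar G(p,s)+\chi_2(3t-2)\,G(p,s), & \tfrac23\le t\le1.\end{cases} \]
The pieces agree on the overlaps and all $t$-derivatives vanish at $t=\tfrac13,\tfrac23$, so $C(p,s,\cdot)$ is a smooth path in $\calF(N)$; $C$ is continuous on $P\times[0,1]^2$; and (1)--(4) hold, using $\bar G(p,0)=G(p,0)$ for (2) and $\chi_1\equiv0$ near $0$ for (3). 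Moreover $C(p,s,\cdot)$ is constant near $t=0$ (equal to $G(p,0)$) and near $t=1$ (equal to $G(p,s)$), and its $t$-derivatives depend continuously on $(p,s)$. For the last assertion, assume $\calF=\calR_C$ as in \pref{Theorem}{thm:fibrant}; then $\stab(\calR_C(N))\subseteq\calR_C(N\times[0,1])$, so for each $(p,s)$ the smooth path $\{C(p,s,r)\}_{r\in[0,1]}$ satisfies $C(p,s,r)+\dt^2\in\calF(N\times[0,1])$ for all $r$, and \pref{Lemma}{lem:gajer}(1) gives $\Lambda_{p,s}>0$ with $C(p,s,f(t))+\dt^2\in\calF(N\times[0,1])$ for every $f$ constant near $0$ and $1$ with $|f'|,|f''|\le\Lambda_{p,s}$. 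By part (2) of that lemma together with the continuous dependence of the $t$-derivatives of $C$ on $(p,s)$, one may take $\Lambda_{p,s}$ continuous in $(p,s)$, so $\Lambda\coloneqq\min_{(p,s)}\Lambda_{p,s}>0$ by compactness of $P\times[0,1]$.

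The delicate step is the construction of $\bar G$: one must smooth $G$ \emph{in the path direction only} — since $P$ is merely a compact topological space, no smoothing in $P$ is available — while at the same time not moving the base metrics $G(p,0)$ and not leaving the open set $\calF(N)$, \emph{uniformly} over the parameter space. This is exactly where openness of $\calF$ and compactness of $P\times[0,1]$ combine, via the Fréchet structure of $C^\infty(N)$, to give a single seminorm $\|\cdot\|_{C^{k_0}}$ and a uniform $\delta$ controlling admissible perturbations, and where convexity of that seminorm makes the linear interpolations harmless.
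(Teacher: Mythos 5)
Your proof is correct, but it takes a genuinely different route from the paper's. The paper argues more directly: it approximates the reparametrised map $(p,s,t)\mapsto G(p,st)$ by the partition-of-unity sums
\[
  C_n(p,s,t)\coloneqq\sum_{i=0}^n G\bigl(p,\tfrac{si}{n}\bigr)\lambda_{ni}(t),
\]
where $\{\lambda_{ni}\}$ is a smooth partition of unity subordinate to the cover $U_{ni}=(\tfrac{i-1}{n},\tfrac{i+1}{n})\cap[0,1]$. Each $C_n$ is automatically smooth in $t$ and satisfies (2)--(4) on the nose, and $C_n\to G(p,st)$ uniformly by uniform continuity of $G$; openness of $\calF(N)$ plus compactness of $G(P\times[0,1])$ then gives $\operatorname{Im}(C_n)\subset\calF(N)$ for $n$ large, so one simply sets $C\coloneqq C_n$. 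You instead smooth $G$ in the $s$-direction by mollification to produce $\bar G$, and then manufacture the required $t$-path in three stages (run along $\bar G(p,\cdot)$, pause, linearly interpolate from $\bar G(p,s)$ to $G(p,s)$), each reparametrised by a cut-off. Your use of openness and compactness --- a single seminorm $\|\cdot\|_{C^{k_0}}$ and a uniform $\delta$, then convexity of the seminorm to keep the convex combinations in $\calF(N)$ --- is exactly the content the paper compresses into \enquote{since $\calF(N)$ is open, $\operatorname{Im}(C_n)\subset\calF(N)$ for $n$ large}, so the two proofs rest on the same foundations; the trade-off is that the paper's one-shot partition-of-unity formula avoids the separate smoothing step and the three-piece gluing, whereas your construction makes the role of the uniform $C^{k_0}$-control completely explicit. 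The final passage to the fibrancy conclusion (invoking $\stab$ and \pref{Lemma}{lem:gajer}, minimising $\Lambda_{p,s}$ over the compact $P\times[0,1]$) is identical in both.
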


\begin{proof}
  First, note that since $\calF(N)$ is open, we may without loss of generality assume that for every $p \in P$ the path $\{G(p, r)\}_{r \in [0,1]} \subset \mathcal F(N)$ is smooth. As in \cite[Lemma 5.1]{ebertfrenck}, we let $U_{ni} := (\frac{i-1}{n},\frac{i+1}{n}) \cap [0,1]$, define the open cover $\mathcal U_n = \{ U_{ni} \mid i = 0, \ldots, n \}$ of $[0,1]$ and choose a subordinate smooth partition of unity $\{\lambda_{ni} \mid i = 0, \ldots, n \}$ to define:
  \begin{align*}
    C_n \colon P \times [0,1]^2 \to \mathcal R(N),
    \quad (p,s,t) \mapsto \sum_{i=0}^n G(p, \frac{s \cdot i}{n})\lambda_{ni}(t).
  \end{align*}
  This converges uniformly to $G$ in the sense that $\lim_{n \to \infty}C_n(p,s,t) = G(p,s \cdot t)$.
  Again using that $\calF(N)$ is open, we conclude there exists a sufficiently large $n$ such that $\operatorname{Im}(C_n) \subset \calF(N)$.
  We then let $C := C_n$.

  If $\calF$ satisfies the assumptions of \pref{Theorem}{thm:fibrant}, then $\stab(\calF(N)) \subset \calF(N \times [0,1])$ and therefore $\{C(p,s,t_0) + \mathrm{d}t^2\}_{t_0 \in [0,1]}$ is a smooth path within $\calF(N \times [0,1])$ for every $(p,s) \in P \times [0,1]$. By (1) in \pref{Lemma}{lem:gajer} we find a $0 < \Lambda_{(p,s)} \leq 1$ such that for every function $f \colon \mathbb R \to [0,1]$ with $|f'|, |f''| \leq \Lambda_{(p,s)}$ we have $C(p, s, f(t)) + \mathrm d t^2 \in \mathcal F(N \times [0,1])$. Finally by (2) in \pref{Lemma}{lem:gajer}, $\Lambda_{(p,s)}$ depends on $(p,s)$ continuously and thus we choose $\Lambda := \min \{\Lambda_{(p,s)}\}$.
\end{proof}

\begin{proof}[Proof of \pref{Theorem}{thm:fibrant}]
  To prove the statement, it suffices to find a solution to the following lifting problem:
  \begin{equation*}
    \begin{tikzcd}
      D \times \{0\} \ar{r}{h} \ar[hook]{d} & \mathcal F(M) \ar{d}{\res} \\
      D \times [0,1] \ar{r} \ar{r}{G} \ar[dashed]{ru} & \mathcal F(\partial M)
    \end{tikzcd}
  \end{equation*}
  where $D$ is a disc. We choose $\delta > 0$ such that $h(D \times \{0\}) \subset \mathcal F(M)$ is of product form on the collar of length $2 \delta$. Since $\mathcal F$ is open, $G$ is homotopic relative to $G|_{D \times \{0,1\}}$ to a map $\tilde G$ with $\{\tilde G(p,t)\}_{t \in [0,1]}$ a smooth path of metrics for every $p \in P$.
  We replace $G$ by $\tilde G$.

  Now apply Lemma \ref{lem:path-to-family} to $G$ to obtain a map $C \colon D \times [0,1]^2 \to \mathcal F(\partial M)$ and $0 < \Lambda \leq 1$ accordingly. Choose a smooth function $f \colon \mathbb R \to [0,1]$ such that $|f'|, |f''| \leq \Lambda$ and $f|_{(-\infty, 0]} \equiv 0$, $f|_{[b, \infty)} \equiv 1$ for $b > \delta > 0$ sufficiently large. Using the collar of $M$, we define $M' = M \setminus (\partial M \times [0,\delta])$ and thus we can write $M = M' \cup_{\partial M} (\partial M \times [0,\delta])$.
  Now choose a monotone diffeomorphism $\phi \colon [0,\delta] \to [0,b]$ with $\phi' = 1$ near $0$ and $\delta$. Thus a candidate for a lift is given by
  \begin{align*}
    \hat G \colon D \times [0,1] & \to \mathcal F(M' \cup_{\partial M} (\partial M \times [0,\delta])) = \mathcal F(M),\\
    (p, s) & \mapsto
    h(p)|_{M'} \cup (\id_{\partial M} \times \phi)^{*}(C(p,s,f(t)) + \mathrm dt^2).
  \end{align*}
  This is well-defined, since along the gluing, we have (cf.\ (2) in Lemma \ref{lem:path-to-family}) for all $p \in D, s \in [0,1]$
  \begin{align*}
    \hat G(p,s)|_{\partial M \times \{0\}} = C(p,s,0) = G(p,0) = \res(h(p)).
  \end{align*}
  Moreover, by construction of $C$ (cf.\ (3) in Lemma \ref{lem:path-to-family}) we have for $p \in D$:
  \begin{align*}
    \hat G(p, 0)
    = h(p)|_{M'} \cup (\id_{\partial M} \times \phi)^{*}(G(p,0) + \mathrm dt^2)
    = h(p)
  \end{align*}
  and (cf.\ (4) in Lemma \ref{lem:path-to-family})
  \begin{align*}
    \res(\hat G(p, s))
    = C(p, s, f(b))
    = C(p, s, 1)
    = G(p,s)
  \end{align*}
  for $p \in D$, $s \in [0,1]$.
  Hence, $\hat G$ makes the diagram commute and is indeed a lift.
\end{proof}

\subsection{Statement of main result, general version}\label{sec:thmmain}
	Having introduced all the necessary notions, we can now state the general version of our main result.

\begin{thm}\label{thm:maingeneral}
	Let $n\ge c\ge3$ and let $\calF$ be a cellular, parametrized codimension $c$-surgery stable Riemannian functor that implies positive scalar curvature. Let $W$ be a $\Spin$-manifold of dimension $d=2n$. Let $h\in\calR^+(\partial W)$ and $g\in \calF(W)_h$. Then for all $k\ge0$ such that $d+k+1\equiv 0(4)$ the composition
	\[\pi_{k}(\calF(W)_h)\too\pi_{k}(\calR_\psc(W)_h)\overset{\inddiff_{g}}\too\ko^{d+k+1}(\pt)\cong\bbZ\]
	is nontrivial.
	If additionally $\calF$ is fibrant, this holds for all manifolds of dimension $d\ge2c$.
\end{thm}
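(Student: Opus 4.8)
The plan is to follow the strategy of \cite{berw}: first establish the statement for one carefully chosen even-dimensional model manifold $M^{2n}$, then build a comparison map from the infinite loop space of a Madsen--Tillmann--Weiss spectrum into $\calF(M)_{g_\circ^{2n-1}}$ by combining stabilization, the Galatius--Randal-Williams theorem and an obstruction argument, identify the resulting composition with $\inddiff_g$ via the index theory of \cite{berw}, and finally propagate to all even-dimensional $\Spin$-manifolds by index additivity and descend to odd dimensions via the spectral flow index theorem and fibrancy. We may assume $\calF(W)_h\neq\emptyset$, and first treat $d=2n$. I would let $\theta$ be the tangential $(c-1)$-type of $W$ and fix a compact $\theta$-manifold $M^{2n}$ with $\partial M=S^{2n-1}$ that is $\theta$-nullcobordant and $\theta$-cobordant rel boundary to $D^{2n}$ (possible since $n\ge c$). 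One has $\calF(M)_{g_\circ^{2n-1}}\neq\emptyset$: indeed $\calF(D^{2n})_{g_\circ^{2n-1}}\neq\emptyset$ by $\codim c$-surgery stability applied to a trivially embedded $S^0\times D^{2n}\embeds W$ together with the product axiom for Riemannian functors, and since $M$ is $\theta$-cobordant rel boundary to $D^{2n}$ the surgery maps of \pref{Corollary}{cor:surgery-equivalence} give $\calF(M)_{g_\circ^{2n-1}}\simeq\calF(D^{2n})_{g_\circ^{2n-1}}$. Next I would put $K\coloneqq([0,1]\times S^{2n-1})\#(S^n\times S^n)$, obtained from the cylinder by one interior surgery of index $n-1\in[c-1,2n-c]$; parametrized surgery stability yields a metric $g_\st\in\calF(K)_{g_\circ^{2n-1},g_\circ^{2n-1}}$ for which gluing in $g_\st$ is a homotopy equivalence $\calF(X)_{h_N,g_\circ^{2n-1}}\to\calF(X\cup K)_{h_N,g_\circ^{2n-1}}$ for every cobordism $X\colon N\leadsto S^{2n-1}$ and every $h_N\in\calR(N)$; with $M_k\coloneqq M\cup k\cdot K$ this gives $\calF(M)_{g_\circ^{2n-1}}\simeq\hocolim{k\to\infty}\calF(M_k)_{g_\circ^{2n-1}}$.

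Next I would form, for each $k$, the Borel construction of the pull-back action $\diff_\partial(M_k)\actson\calF(M_k)_{g_\circ^{2n-1}}$, a fibration over $B\diff_\partial(M_k)$, and assemble these via the stabilization maps into a fibration $p_\infty$ over $\hocolim{k\to\infty}B\diff_\partial(M_k)$, which by \cite{grw_stable} receives an acyclic map $\Psi$ from $\Omega^\infty\mtt_{c-1}(2n)$. The technical heart of the argument is to show that this pull-back action factors up to homotopy through an abelian group; this I would deduce from parametrized surgery stability --- which allows one to commute diffeomorphisms supported near distinct $K$-blocks --- combined with an argument in the style of Eckmann--Hilton. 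Granting this, an obstruction argument extends $p_\infty$ along $\Psi$ to a fibration $p_\infty^+\colon T^+\to\Omega^\infty\mtt_{c-1}(2n)$ whose pullback along $\Psi$ is $p_\infty$, and composing its fibre transport with a homotopy inverse of the stabilization equivalence above produces the desired map
\[\rho\colon\Omega^{\infty+1}\mtt_{c-1}(2n)\too\calF(M)_{g_\circ^{2n-1}}.\]

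Since $\calF$ implies positive scalar curvature, $\rho$ lands in $\calR_\psc(M)_{g_\circ^{2n-1}}$, and by the additivity theorem and the identification of the fibre transport of $p_\infty^+$ with the $\hat\calA$-invariant of the associated bundle (both imported from \cite{berw}) the composite $\inddiff_g\circ\rho$ is, for a suitable base-point, weakly homotopic to $\Omega\hat\calA$, where $\hat\calA\colon\Omega^\infty\mtt_{c-1}(2n)\to\Omega^{\infty+2n}\ko(\pt)$. The rational computation of $\pi_*\mtt_{c-1}(2n)$ and of $\hat\calA$ carried out in \pref{Section}{sec:mtw} then shows that $\Omega\hat\calA$ is surjective on $\pi_k(-)\otimes\bbQ$ whenever $\ko^{d+k+1}(\pt)\neq0$, i.e.\ whenever $d+k+1\equiv0(4)$; this settles the theorem for $W=M$. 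To propagate to an arbitrary $\Spin$-manifold $W^{2n}$ I would delete an open disk to obtain $W'$ (so that $\calF(W)_h\simeq\calF(W')_{h,g_\circ^{2n-1}}$ by cellularity and \pref{Proposition}{prop:gluingdisk}), glue $M$ onto the $S^{2n-1}$-boundary component of $W'$ to get $\widehat W$ with $\partial\widehat W=\partial W$, which is $\theta$-cobordant rel $\partial W$ to $W$ and hence satisfies $\calF(\widehat W)_h\simeq\calF(W)_h$, and observe that the gluing map $\calF(M)_{g_\circ^{2n-1}}\to\calF(\widehat W)_h$ (obtained by fixing the metric on $W'$) intertwines the index difference by additivity; thus the nontriviality for $M$ transports to $W$.

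Finally, suppose $\calF$ is fibrant and $d\ge2c$ is odd. Then $W\times[0,1]$ is an even-dimensional manifold of dimension $d+1\ge2c$, so the preceding case applies to it; fibrancy ensures that restricting a metric on $W\times[0,1]$ to its two end copies of $W$ (with a fixed product metric along $\partial W\times[0,1]$) is a Serre fibration, and the spectral flow index theorem of \cite{berw} relates the fibre transport of this fibration --- and thereby the index difference on $\calF(W)_h$ --- to the index difference on $W\times[0,1]$, so that the even-dimensional nontriviality descends through the associated long exact sequence to yield the claim in degrees $d+k+1\equiv0(4)$. The step I expect to be the main obstacle is the obstruction-theoretic extension of $p_\infty$ along the acyclic map $\Psi$ --- equivalently, the proof that the pull-back action $\diff_\partial(M_k)\actson\calF(M_k)_{g_\circ^{2n-1}}$ factors up to homotopy through an abelian group --- since this is the one place where parametrized surgery stability of $\calF$ is used in an essential way rather than merely for bookkeeping.
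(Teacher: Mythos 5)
Your even-dimensional argument is essentially a faithful re-exposition of Sections~3.2--3.3 of the paper: you build the model nullcobordism $M$ of $S^{2n-1}$, stabilize by $K = ([0,1]\times S^{2n-1})\#(S^n\times S^n)$ using $\calF$-stable metrics (\pref{Lemma}{lem:stable-metrics}), run the Borel construction through the Galatius--Randal-Williams theorem and an Eckmann--Hilton abelianization of the $\diff_\partial$-action to extend $p_\infty$ over $\Omega^\infty\mtt_{c-1}(2n)$, identify $\inddiff_g\circ\rho$ with $\Omega\hat\calA$ via \pref{Proposition}{prop:inddiff-to-ind} and \pref{Lemma}{lem:assumption}, and then invoke \pref{Theorem}{thm:rational-mtw}. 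The propagation to arbitrary even-dimensional $\Spin$-manifolds via \pref{Proposition}{prop:gluingdisk}, the stable metric, and the additivity theorem is also a correct paraphrase of \pref{Proposition}{prop:propagation}.

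The odd-dimensional step, however, has a genuine gap, and it is a directionality error. You take $W$ of odd dimension $d$, form the even-dimensional manifold $W\times[0,1]$, and consider the restriction fibration $\calF(W\times[0,1])\to\calF(W)\times\calF(W)$. You then want to ``descend'' the nontriviality you have already established for $\calF(W\times[0,1])_h$ (a \emph{fibre} of this fibration) to the base $\calF(W)$. But the tool available --- the paper's \pref{Theorem}{thm:increase}, which under the hood uses the spectral flow index theorem --- says that $\inddiff_g\circ T$ is weakly homotopic to $-\Omega\inddiff_h$, where $T\colon\Omega_h\calF(\partial X)\to\calF(X)_h$ is the fibre transport. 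This equation transfers nontriviality \emph{from the base to the fibre}: if $\Omega\inddiff_h$ is nontrivial on some homotopy class in $\Omega\calF(\partial X)$, its image under $T$ is detected by $\inddiff_g$ on $\calF(X)_h$. It does \emph{not} transfer nontriviality from the fibre to the base: knowing that $\inddiff_g$ is nontrivial on $\pi_*(\calF(X)_h)$ does not force $\Omega\inddiff_h$ to be nontrivial, because the nontrivial classes need not lie in the image of $T_*$. So your LES argument runs in the unhelpful direction, and the conclusion does not follow.

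The paper avoids this by applying \pref{Theorem}{thm:increase} to $X=D^{2n+1}$, whose \emph{boundary} $S^{2n}$ is the closed even-dimensional manifold for which the result was already established (via propagation from the model manifold). Here the roles are correct: one knows nontriviality for the base $\calF(S^{2n})$, and concludes nontriviality for the fibre $\calF(D^{2n+1})_h$. One then propagates once more, via \pref{Proposition}{prop:propagation}, from $D^{2n+1}$ to all odd-dimensional $\Spin$-manifolds $W$. Your argument can be repaired by replacing the $W\times[0,1]$ step with exactly this: apply the even-dimensional case to $S^{2n}$, feed it into the fibre transport for $D^{2n+1}$, and then propagate. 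As written, however, the odd-dimensional step is a gap.

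One smaller inaccuracy: in the odd-dimensional paragraph you attribute the needed identity to ``the spectral flow index theorem.'' That theorem (\pref{Theorem}{thm:spectral-flow}) is an ingredient buried inside the proof of \pref{Theorem}{thm:increase}; the statement you actually need is \pref{Theorem}{thm:increase} itself, which in addition requires the fibrancy hypothesis to make $\res$ a Serre fibration.
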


\noindent \pref{Theorem}{thm:mainpcurv} and \pref{Theorem}{thm:maindkric} now follow from the above theorem by  \pref{Example}{ex:surgerystable} and \pref{Proposition}{prop:fibrant}. Note that the long list of adjectives in front of \enquote{Riemannian functor} does not imply lack of examples but rather is due to the fact that there are many examples and the aim to extract necessary assumptions out of these.

\subsection{Stable metrics}

The following Lemma states the existence of stable metrics (cf.\ \cite{erw_psc2}) in a special case. Let $c\ge3$ and let $\calF$ be a parametrized codimension $c$ surgery stable Riemannian functor.

\begin{lem}[{\cite[Theorem 2.6]{berw}}]\label{lem:stable-metrics}
		Let $d\ge 2c-1$ and let $V^d\colon S^{d-1} \leadsto S^{d-1}$ be a $(c-2)$-connected, $BO(d)\langle c-1\rangle$-cobordism. Also, assume that $V$ is $BO(d)\langle c-1\rangle$-cobordant to $S^{d-1}\times [0,1]$ relative to the boundary. Then there exists a metric $g\in\calF(V)_{g_\circ,g_\circ}$ with the following property: If $W:S^{d-1}\leadsto S^{d-1}$ is cobordism and $h\in\calR(S^{d-1})$ is a boundary condition then the two gluing maps
		\begin{align*}
			\mu(\_,g)\colon\calF(W)_{h,g_\circ}&\too \calF(W\cup V)_{h,g_\circ}\\
			\mu(g,\_)\colon\calF(W)_{g_\circ,h}&\too \calF(V\cup W)_{g_\circ, h}
		\end{align*}
		are homotopy equivalences.
\end{lem}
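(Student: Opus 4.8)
The plan is to run the argument of \cite[\S2]{berw}, where this is proved for $\calR_\psc$, and to observe that it uses only features of $\calF$ that have been assumed here: the standard ``trace'' metrics built from the surgery-stability metrics $g^k$ via the Gromov--Lawson construction, the cylinder metric $g_\circ+\dt^2\in\calF(S^{d-1}\times[0,1])_{g_\circ,g_\circ}$ (which exists by \pref{Proposition}{prop:cylinders} together with the remark that $\codim c$-surgery-stability forces it), and the parametrized surgery equivalence of \pref{Corollary}{cor:surgery-equivalence}. The first step is to realise $V$ as a cylinder modified by a controlled sequence of surgeries in the interior. I would pick a $\boc$-cobordism $Z\colon S^{d-1}\times[0,1]\leadsto V$ relative to the boundary and surger it in its interior to make it $(c-1)$-connected; this is possible because $d\ge 2c-1$ and, since $c\ge 3$, everything in sight is simply connected. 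Using that both $S^{d-1}\times[0,1]$ and $V$ are $(c-2)$-connected, a long exact sequence argument then shows that the pairs $(Z,S^{d-1}\times[0,1])$ and $(Z,V)$ are $(c-1)$-connected, so by handle-trading as in \cite{smale_structure} the cobordism $Z$ admits a handle decomposition relative to the cylinder with all handles of index in $[c,d-c+1]$, over which the $\boc$-structure extends. Reading this decomposition as a sequence of traces of surgeries performed in the \emph{interior} of the evolving $d$-manifold produces a filtration $S^{d-1}\times[0,1]=V_0\subset V_1\subset\cdots\subset V_m=V$ in which $V_i$ arises from $V_{i-1}$ by surgery along an embedding $\varphi_i\colon S^{k_i}\times D^{d-k_i}\embeds\operatorname{int}(V_{i-1})$ with $k_i\in[c-1,d-c]$.

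Next I would build the metric. On $V_0$ put $g_\circ+\dt^2$, and on the trace of each $\varphi_i$ glue in the standard metric obtained from $g^{k_i}$; since the $g^k$ have round boundary by definition and the surgeries avoid the collars, the concatenation lies in $\calF(V)_{g_\circ,g_\circ}$, and we let $g$ be it. To see that $\mu(-,g)\colon\calF(W)_{h,g_\circ}\to\calF(W\cup V)_{h,g_\circ}$ is a homotopy equivalence, factor it as the equivalence $\calF(W)_{h,g_\circ}\simarrow\calF(W\cup V_0)_{h,g_\circ}$ that glues the cylinder metric (an equivalence by prolonging the collar) followed by $m$ maps, the $i$-th of which glues in the trace metric of $\varphi_i$. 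Because $\varphi_i$ lands in the interior, the $i$-th map is an instance of \pref{Corollary}{cor:surgery-equivalence} applied to $W\cup V_{i-1}$ with the boundary metrics $h$ and $g_\circ$ held fixed, in the ``glue a fixed standard metric'' reformulation obtained exactly as in the proof of \pref{Proposition}{prop:gluingdisk}; hence it is a homotopy equivalence and so is the composite. For $\mu(g,-)\colon\calF(W)_{g_\circ,h}\to\calF(V\cup W)_{g_\circ,h}$ the identical argument applies, and since the $\varphi_i$ are interior surgeries whose index range $[c-1,d-c]$ is symmetric there is no left/right asymmetry to address.

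The part I expect to be the real work is the topological reduction above: obtaining the interior-surgery description of $V$ with indices in $[c-1,d-c]$ \emph{compatibly with the $\boc$-structure}. This is where the hypothesis that $V$ is $\boc$-cobordant to the cylinder relative to the boundary is consumed --- it is what bounds the surgery indices from above (the $(c-2)$-connectivity of $V$ alone only bounds them from below after trading) and keeps the tangential data, hence the framings needed to speak of $S^{k_i}\times D^{d-k_i}$, under control --- and where $d\ge 2c-1$ enters through the Whitney tricks in the handle cancellation. The second point needing care, though entirely routine, is upgrading \pref{Corollary}{cor:surgery-equivalence}, which as stated concerns an inclusion $\calF(M,\varphi)\embeds\calF(M)$, to the assertion that gluing a fixed standard trace metric onto a cobordism is a homotopy equivalence uniformly in $W$ and $h$; this is done by unwinding the homeomorphisms between $\calF(-,\varphi;g^k)$ and spaces of metrics on complements with prescribed boundary, just as in \pref{Proposition}{prop:gluingdisk} and in \cite{ebertfrenck,berw}. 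Nothing here is conceptually new relative to \cite{berw}; the content of the lemma is precisely that the passage from $\calR_\psc$ to an arbitrary parametrized surgery-stable Riemannian functor changes nothing.
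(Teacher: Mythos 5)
Your strategy coincides with the paper's: reduce $V$ to a cylinder by interior surgeries of index in $[c-1,d-c]$ via a $(c-1)$-connected relative $\boc$-cobordism (the direction is reversed --- the paper surgers $V$ down to $S^{d-1}\times[0,1]$, you build up from the cylinder --- but that is immaterial), and then intertwine the gluing maps with the resulting chain of surgery equivalences $\calS_{\calF,\varphi_i}$. The topological input, the index range, and the final homotopy-equivalence argument all match.

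There is, however, a genuine gap in the construction of $g$. You define $g$ by putting $g_\circ+\dt^2$ on $V_0$ and ``on the trace of each $\varphi_i$ glue in the standard metric obtained from $g^{k_i}$ via the Gromov--Lawson construction'', and then assert that ``the concatenation lies in $\calF(V)_{g_\circ,g_\circ}$''. For a general parametrized surgery-stable Riemannian functor $\calF$ none of this is available. The axiom only gives a metric $g^k\in\calF(S^k\times D^{d-k})$ and the homotopy equivalence $\calF(M,\varphi;g^k)\embeds\calF(M)$; it supplies neither a trace metric on the $(d+1)$-dimensional trace in some hypothetical $\calF(\text{trace})$, nor a metric $\tilde g^k$ on the opposite handle $D^{k+1}\times S^{d-k-1}$ as a primitive (the paper's remark after the definition explicitly disclaims the latter), nor any guarantee that a cut-and-paste of $\calF$-metrics along a round boundary is again in $\calF$ (membership in $\calF$ need not be local). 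The paper sidesteps all of this by defining $g$ \emph{implicitly}: since $\calS_{\calF,\varphi_l}\circ\cdots\circ\calS_{\calF,\varphi_1}\colon\calF(V)_{g_\circ,g_\circ}\to\calF(S^{d-1}\times[0,1])_{g_\circ,g_\circ}$ is a homotopy equivalence, one can simply choose $g$ to be any metric whose image lands in the path component of $g_\circ+\dt^2$. Once that is done, your final step goes through exactly as you sketch: the interior surgeries commute up to homotopy with gluing along the (disjoint) boundary spheres, $\mu(\_,g_\circ+\dt^2)$ is an equivalence by collar prolongation, and hence $\mu(\_,g)$ and $\mu(g,\_)$ are equivalences. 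So the fix is to replace your explicit gluing description by this choice-by-$\pi_0$ definition; everything else in your proposal is sound.
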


\begin{defn}\label{def:stable-metrics}
A metric $g$ as in this Lemma is called an \emph{$\calF$-stable metric}.
\end{defn}
\begin{proof}[Proof of \pref{Lemma}{lem:stable-metrics}]
	By assumption, there exists a relative $BO(d)\langle c-1\rangle$-cobordism $X\colon V\leadsto S^{d-1}\times [0,1]$ and by performing surgery on the interior of $X$ we may assume $X$ has no handles of indices $0,\dots, c-1, d+1-c+1,\dots, d+1$. So $S^{d-1}\times [0,1]$ is obtained from $V$ by a sequences of surgeries in the interior with these indices. For $i=1,\dots,l$ let $\varphi_i\colon S^{k_i}\times D^{d-k_i}\embeds V_i$ with $V_0=V$ and $V_{i+1}\coloneqq (V_{i})_{\varphi_i}$ be the corresponding sequence of surgery embeddings with $k_i\in\{c,\dots, d+1-c\}$. Let $g\in\calF(V)_{g_\circ,g_\circ}$ such that $g_\circ + \mathrm{d} t^2\in[\sfphi{l}\circ\dots\circ\sfphi{1}(g)]\in\pi_0(\calF(S^{d-1}\times[0,1])_{g_\circ,g_\circ})$ which is possible since the maps $\sfphi i$ are homotopy equivalences. Now $\mu(g_\circ+\dt^2,\_)$ (resp. $\mu(\_,g_\circ+\dt^2)$) is a homotopy equivalence and hence so is $\mu(g,\_)$ (resp. $\mu(\_,g)$).
%
%
\end{proof}

\subsection{Madsen-Tillmann-Weiss spectra}\label{sec:mtw}

We briefly recall the definition of structured Madsen--Tillmann--Weiss spectra\footnote{See \cite{gmtw} or \cite{gollinger} for a more detailed introduction.}. Let $B\ort(d)$ denote the classifying space of $\rk d$-vector bundles and let $U_d\to B\ort(d)$ be the universal vector bundle. The orthogonal complement of $U_d$ which is a virtual vector bundle is denote by $U_d^\perp$. Let
\[B_n(d)\coloneqq B\ort(d)\scpr{n}\overset{\theta_n(d)}\too B\ort(d)\]
be the $n$-connected cover of $B\ort(d)$ with $B_n\coloneqq\mathrm{colim}_dB_n(d)$. We define the spectrum $\mt\theta_n(d)$ as the Thom spectrum of $U_d^\perp$, \ie
\[\mt\theta_n(d)\coloneqq \Th(\theta_n(d)^*U_d^\perp)\]
Note that for $d\ge3$ we have $B_2(d)=B\Spin(d)$ and for $n\ge3$ we get a map
\[\mt\theta_n(d) \too \mt\Spin(d)\]
By \cite[p.796]{berw} there is a spectrum map $\lambda_{-d}\colon \mt\Spin(d)\to \Sigma^{-d}KO$ and we have the composition $\mt\theta_n(d) \too \mt\Spin(d)\to\Sigma^{-d}KO$. We get the following induced maps on rational homotopy groups.
\begin{align*}
	\pi_k(\mtt_n(d))\otimes\bbQ \to \pi_k(\mt\Spin(d))\otimes\bbQ \to &\ko^{-d-k}(\pt)\otimes\bbQ\\
		&\qquad \cong \begin{cases}\bbQ &\text{ if } d+k\equiv 0(4)\\0&\text{ else} \end{cases}
\end{align*}
where the first map is induced by the inclusion. By the Pontryagin--Thom construction the group $\pi_k(\mtt_n(d))$ is isomorphic to the cobordism group of triples $[M,V,\phi]$ where $M$ is a closed $(k+d)$-manifold, $V\to M$ is a $\rk d$ vector bundle with a $\theta_n$ structure and $\phi\colon V\oplus\underline\bbR^{k} \cong TM$ is a stable isomorphism of vector bundles (cf.\ \cite[Theorem 5.1]{berw} or \cite[Proposition 1.2.3]{gollinger}. The map $\pi_k(\mtt_n(d))\otimes\bbQ \to \pi_k(\mt\Spin(d))\otimes\bbQ$ is the forgetful map. Note that in the case $d+k\equiv0(4)$ the triple $[M,V,\phi]$ gets mapped to the $\hat\calA$-genus $\hat\calA(M)$ of $M$ under the above composition by \cite[p. 817]{berw}. We will denote the above composition by $\hat \calA\otimes\bbQ$.

\begin{thm}\label{thm:rational-mtw}
	$\hat\calA\otimes\bbQ\colon\pi_k(\mtt_n(d))\otimes\bbQ\to\ko^{-d-k}(\pt)\otimes\bbQ$ is surjective, provided $d>n+1$.
\end{thm}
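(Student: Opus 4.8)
The plan is to reduce the assertion to a computation in the rational cohomology of $B\ort(d)\scpr{n}$. Since $\ko^{-d-k}(\pt)\otimes\bbQ$ vanishes unless $4\mid d+k$, we may assume $d+k=4\ell$. First I would use that rational homotopy of spectra agrees with rational homology, together with the rational Thom isomorphism for the rank-$(-d)$ virtual bundle $\theta_n(d)^*U_d^\perp$ over the oriented space $B\ort(d)\scpr{n}$; this produces a natural isomorphism $\pi_k(\mtt_n(d))\otimes\bbQ\cong H_{4\ell}(B\ort(d)\scpr{n};\bbQ)$ sending the Pontryagin--Thom class of a triple $[M,V,\phi]$ to $f_*[M]$, where $f\colon M\to B\ort(d)\scpr{n}$ classifies the $\theta_n$-structured bundle $V$. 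Under this identification the statement quoted from \cite{berw}, that $\hat\calA\otimes\bbQ$ sends $[M,V,\phi]$ to $\hat\calA(M)=\big\langle\hat A_\ell(p(V)),[M]\big\rangle$, says precisely that $\hat\calA\otimes\bbQ$ is the functional $x\mapsto\langle c^*\hat A_\ell,x\rangle$, where $c\colon B\ort(d)\scpr{n}\to B\ort(d)$ is the covering map and $\hat A_\ell\in H^{4\ell}(B\ort(d);\bbQ)$ is the degree-$4\ell$ component of the total $\hat A$-class. Because the rational cohomology pairing of the finite-type space $B\ort(d)\scpr{n}$ is perfect in each degree, it suffices to show $c^*\hat A_\ell\neq 0$.

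Next I would compute $H^*(B\ort(d)\scpr{n};\bbQ)$. Rationally $B\ort(d)$ is a product of even Eilenberg--MacLane spaces whose cohomology is the polynomial algebra on the Pontryagin classes $p_1,\dots,p_{\lfloor d/2\rfloor}$ (with $p_{d/2}$ read as $e^2$ for the Euler class $e$ when $d$ is even), so the $n$-connected cover kills exactly the generators of degree $\le n$; thus $c^*p_j=0$ for $4j\le n$, while for $\lfloor n/4\rfloor<j\le\lfloor d/2\rfloor$ the class $c^*p_j$ is a polynomial generator (or the square of one). Writing $\hat A_\ell=\sum_{\mu\vdash\ell}a_\mu\,p_\mu$ and setting $a:=\lfloor n/4\rfloor+1$, $b:=\lfloor d/2\rfloor$, the classes $c^*p_\mu$ for partitions $\mu$ with all parts in $[a,b]$ are pairwise distinct nonzero monomials in the surviving generators, whereas $c^*p_\mu=0$ for every other $\mu$. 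Hence $c^*\hat A_\ell\neq 0$ as soon as some partition $\mu\vdash\ell$ with all parts in $[a,b]$ has $a_\mu\neq 0$. That every coefficient $a_\mu$ of the $\hat A$-polynomial is nonzero is classical (it ultimately rests on the Taylor coefficients of $\tfrac{z/2}{\sinh(z/2)}$ being nonzero with a definite alternating sign, which rules out cancellation in Hirzebruch's formula for the associated multiplicative sequence), so only a combinatorial point remains.

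Finally I would prove the combinatorial lemma: if $d>n+1$, then $\ell$ is a sum of integers each lying in $[a,b]$. Indeed $4\ell=d+k\ge d>n\ge 4\lfloor n/4\rfloor$ forces $\ell>\lfloor n/4\rfloor$, i.e.\ $\ell\ge a$; and $d\ge n+2$ gives $b=\lfloor d/2\rfloor\ge 1+\lfloor n/2\rfloor\ge 1+2\lfloor n/4\rfloor=2a-1$. Given $\ell\ge a$ and $b\ge 2a-1$, repeatedly subtract $a$ from $\ell$ as long as the running value exceeds $b$: any value $>b\ge 2a-1$ is $\ge 2a$, so after subtracting $a$ it is still $\ge a$, and the first value $\le b$ so produced lies in $(b-a,b]\subseteq[a,b]$ because $b-a\ge a-1$. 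This yields the desired partition; combined with the previous paragraph it gives $c^*\hat A_\ell\neq 0$, and hence surjectivity. (Consistently with the hypothesis, the lemma fails when $d=n+1$ and $4\mid n$: then $2a-1>b$, so e.g.\ $\ell=b+1$ is not representable.)

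The step I expect to be the real obstacle is the first paragraph: making it fully precise that, after rationalization and the Thom isomorphism, $\hat\calA\otimes\bbQ$ is literally pairing against $c^*\hat A_\ell$. This amounts to unwinding the definition of the $\ko$-orientation $\lambda_{-d}$ of \cite{berw} and the Atiyah--Bott--Shapiro-type identification of its Pontryagin character with the $\hat A$-class---though the statement quoted from \cite{berw} about Pontryagin--Thom generators already essentially packages exactly this. By contrast, the rational cohomology of the connected cover, the nonvanishing of the $\hat A$-coefficients, and the combinatorial lemma are routine.
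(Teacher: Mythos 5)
Your proposal reaches the right conclusion and is essentially the cohomological dual of the paper's argument, but one step is not adequately justified. The paper works in rational bordism: via Lemma~\ref{lem:pontryagin} and Corollary~\ref{cor:mtt-bordism} it picks, for each $l\in\{a,\dots,2a-1\}$ with $a=\lfloor n/4\rfloor+1$, a class $M_l\in\Omega^{\theta_n}_{4l}\otimes\bbQ$ whose only nonzero Pontryagin number is $\langle p_l,[M_l]\rangle$, cites \cite{bergberg} for $\hat\calA(M_l)\neq0$, writes $\ell=qa+r$ (your combinatorial lemma, specialized to $r\le 2a-1\le\lfloor d/2\rfloor$), and takes $M=(M_a)^q\times M_r$; multiplicativity $\hat\calA(X\times Y)=\hat\calA(X)\hat\calA(Y)$ gives $\hat\calA(M)\neq 0$, while all Pontryagin classes of $M$ land in degrees $\le\lfloor d/2\rfloor$. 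You transpose this to cohomology: after identifying $\hat\calA\otimes\bbQ$ with pairing against $c^*\hat A_\ell\in\bbQ[p_a,\dots,p_b]$ (the same Lemma~\ref{lem:pontryagin}), the problem becomes exhibiting a partition $\mu\vdash\ell$ with all parts in $[a,b]$ such that the coefficient $a_\mu$ of $p_\mu$ in $\hat A_\ell$ is nonzero; your combinatorial lemma produces such a $\mu$, and the two routes genuinely diverge only at the nonvanishing input. The paper needs only the \emph{leading} coefficient of $\hat A_l$ (a Bernoulli number up to normalization) and gets the rest from multiplicativity of the genus --- available to it precisely because it works with products of manifolds rather than with cohomology classes. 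You instead assert that \emph{every} coefficient $a_\mu$ of the $\hat A$-polynomial is nonzero. That statement is correct, but the reason you give does not establish it: the alternating signs of the Taylor coefficients of $\tfrac{z/2}{\sinh(z/2)}$ give the coefficients of $\hat A_\ell$ a uniform sign in the \emph{monomial} (equivalently, power-sum) symmetric-function basis, but passing to the elementary-symmetric/Pontryagin basis involves a change of basis with mixed signs, so that sign observation alone does not exclude cancellation. You would need a genuine reference here (quite possibly \cite{bergberg} itself) or a more careful argument. Alternatively, take $\mu=(a^q,r)$ and replace the coefficient computation with the genus-multiplicativity trick --- which is exactly what the paper's bordism framing is buying it.
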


\noindent For the proof we need the following lemma.

\begin{lem}\label{lem:pontryagin}
	For  $n<d$ we have $H^*(B_n(d),\bbQ)\cong\bbQ[p_{\floor{n/4} +1},\dots,p_{\floor{d/2}}]$.
\end{lem}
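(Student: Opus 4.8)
The plan is to reduce the statement to a computation in rational homotopy theory about $B\SO(d)$ and its connected covers. For $n=0$ the claim is the classical identity $H^*(B_0(d);\bbQ)=H^*(B\ort(d);\bbQ)\cong\bbQ[p_1,\dots,p_{\floor{d/2}}]$, obtained from $H^*(B\SO(d);\bbQ)$ by passing to $\bbZ/2$-invariants. So assume $n\ge1$. Since $B\SO(d)$ is the universal cover of $B\ort(d)$, every higher connected cover of $B\ort(d)$ factors through it, whence $B_n(d)=B\ort(d)\langle n\rangle=B\SO(d)\langle n\rangle$; thus it suffices to compute $H^*(B\SO(d)\langle n\rangle;\bbQ)$ for $1\le n<d$.

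Next I would recall the rational homotopy of $\SO(d)$ (E.~Cartan): using the fibre sequences $\SO(k)\to\SO(k{+}1)\to S^k$ together with rational Hurewicz and the Serre spectral sequence, $\pi_*(\SO(d))\otimes\bbQ$ is an exterior algebra on generators in degrees $3,7,\dots,2d-3$ when $d$ is odd, and in degrees $3,7,\dots,2d-5$ together with one generator in degree $d-1$ when $d$ is even. Shifting up by one, $\pi_*(B\SO(d))\otimes\bbQ$ is concentrated in the even degrees $4,8,\dots,2d-2$ if $d$ is odd, and in $4,8,\dots,2d-4$ together with degree $d$ if $d$ is even --- exactly the degrees of the polynomial generators of $H^*(B\SO(d);\bbQ)$, i.e.\ the Pontryagin classes $p_1,\dots,p_{\floor{(d-1)/2}}$ and, for $d$ even, additionally the Euler class $e\in H^d$ with $e^2=p_{d/2}$. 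Because all these degrees are even, $B\SO(d)$ is nilpotent with free graded--commutative rational cohomology; realising each generator as a map to an Eilenberg--MacLane space gives a rational cohomology isomorphism $B\SO(d)\to\prod_jK(\bbQ,n_j)$, so by rational Whitehead $B\SO(d)\simeq_\bbQ\prod_jK(\bbQ,n_j)$, one factor per generator.

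Finally I would pass to connected covers. The $n$-connected cover functor sends $K(\bbQ,m)$ to a point if $m\le n$ and to $K(\bbQ,m)$ if $m>n$, and commutes with finite products, so $B\SO(d)\langle n\rangle\simeq_\bbQ\prod_{n_j>n}K(\bbQ,n_j)$, with rational cohomology the polynomial algebra on the generators of degree $>n$. A Pontryagin generator $p_i$ survives iff $4i>n$, i.e.\ $i\ge\floor{n/4}+1$, and since $n<d$ the top generator always survives. For $d$ odd this gives $H^*(B_n(d);\bbQ)\cong\bbQ[p_{\floor{n/4}+1},\dots,p_{\floor{d/2}}]$ on the nose. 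For $d$ even it gives $\bbQ[p_{\floor{n/4}+1},\dots,p_{d/2-1},e]$, a polynomial algebra on $\floor{d/2}-\floor{n/4}$ generators; making the identification $p_{d/2}:=e$ (permissible since $e^2=p_{d/2}$ and one only ever uses the Poincaré series or the Pontryagin subalgebra) recovers the stated form. I expect this even-dimensional bookkeeping to be the only real subtlety: passing from $B\ort(d)$ to any connected cover reinstates the orientation and hence the degree-$d$ Euler class in place of the degree-$2d$ class $p_{d/2}$, so strictly speaking one should either state the conclusion with $e$ made explicit or, as in the intended application to \pref{Theorem}{thm:rational-mtw}, note that only the Pontryagin part of $H^*(B_n(d);\bbQ)$ feeds into the $\hat\calA$-genus. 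The remaining ingredients --- the rational splitting of $B\SO(d)$ and its compatibility with connected covers --- are routine, since all rational homotopy lies in even degrees and there are therefore no nonzero rational $k$-invariants to obstruct the splitting.
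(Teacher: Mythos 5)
Your proof is correct but takes a genuinely different route from the paper's. The paper argues by induction up the Whitehead tower: it takes $H^*(B\SO(d);\bbQ)$ as the base case ($n=1$) and uses the Serre spectral sequence of the fibration $B_n(d)\to B_{n-1}(d)\to K(\pi_n(B\SO(d)),n)$ to strip off one polynomial generator per step when $n\equiv 0\ (4)$, noting that nothing happens rationally for the other residues. You instead split $B\SO(d)$ rationally as a product of Eilenberg--MacLane spaces --- legitimate because $\pi_*(B\SO(d))\otimes\bbQ$ is concentrated in even degrees, so the rational cohomology is free graded-commutative and there are no rational $k$-invariants --- and then observe that the $n$-connected cover functor, which commutes with finite products and with rationalization on simply connected spaces, simply discards the factors in degrees $\le n$. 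Both arguments are sound; yours is more structural and avoids the spectral-sequence comparison step. It also has the merit of making the even-dimensional subtlety explicit: for $d$ even one has $H^*(B\SO(d);\bbQ)=\bbQ[p_1,\dots,p_{d/2-1},e]$ with the Euler class $e$ in degree $d$ and $e^2=p_{d/2}$, so the top polynomial generator is really $e$ rather than $p_{d/2}$; the paper's base case elides this, and the imprecision carries through the induction. As you correctly note, this is harmless for the intended application in \pref{Theorem}{thm:rational-mtw} and \pref{Corollary}{cor:mtt-bordism}, since $n<d$ guarantees the top generator always survives and only Pontryagin numbers feed into the $\hat\calA$-genus.
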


\begin{proof}
	The proof is by induction over  $n$ and all  cohomology here is with rational coefficients. For $n=1$ we have that $B_n(d) =  B\SO(d)$ and $H^*(B\SO(d))\cong\bbQ[p_{1},\dots,p_{\floor{d/2}}]$ is well known (cf.\ \cite[Lemma 2.4]{brown_cohomology}). For $n\ge2$ we have a fibration
	\[B_n(d)\too B_{n-1}(d) \too K(\pi_n(B\SO(d)),n).\]
	Note that because of $n<d$ we have $\pi_n(B\SO(d))$ is either $\bbZ$ (for $n\equiv0(4)$), $\bbZ/2$ (for $n\equiv1,2(8)$) or $0$ and hence it suffices to consider the case that $n=4m$ because in the other cases the map $B_n(d)\to B_{n-1}(d)$ induces an isomorphism in rational cohomology. The Serre spectral sequence has the form
	\[E_{p,q}^2 = H^{p}(K(\bbZ,n))\otimes H^q(B_n(d))\Rightarrow H^{p+q}(B_{n-1}(d)).\]
	The cohomology of  $K(\bbZ,n)$ is given by $H^{*}(K(\bbZ,n)) \cong\bbQ[\alpha]$ for $\alpha$ in degree $n$. Furthermore, $H^{p}(B_n(d))=0$ unless $p\equiv0(4)$. Hence all differentials vanish, the spectral sequence collapses on the $E_2$-page and we have
	\[\bigoplus_{p+q=k} H^p(K(\bbZ,n))\otimes H^q(B_n(d)) \congarrow H^k(B_{n-1}(d)).\]
	Since $H^{n}(B_n(d))=0$, the preimage of $p_{m}$ is the class $\alpha$ generating $H^*(K(\bbZ,n))$ and therefore $H^*(B_n(d),\bbQ)\cong\bbQ[p_{m+1},\dots,p_{\floor{d/2}}]$.
\end{proof}

\begin{cor}\label{cor:mtt-bordism}
	The bordism group $\pi_k(\mtt_n(d))\otimes\bbQ$ consists of the classes in $\Omega_{d+k}^{\theta_n}\otimes\bbQ$ which do not have nontrivial Pontryagin classes of degree greater than $\floor{d/2}$.
\end{cor}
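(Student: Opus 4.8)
The plan is to deduce this directly from the Pontryagin--Thom description of $\pi_k(\mtt_n(d))$ recalled above together with the cohomology computation in \pref{Lemma}{lem:pontryagin}; the point is to understand the forgetful map $\pi_k(\mtt_n(d))\to\Omega_{d+k}^{\theta_n}$, $[M,V,\phi]\mapsto[M]$, after rationalization. I would first record the \enquote{easy inclusion}: since $\phi\colon V\oplus\underline\bbR^{k}\cong TM$ with $V$ of rank $d$, the Pontryagin classes satisfy $p_i(TM)=p_i(V)=0$ whenever $i>d/2$, i.e.\ whenever $i\ge\floor{d/2}+1$. Hence for any such triple every Pontryagin number of $M$ that involves a factor $p_i$ with $i>\floor{d/2}$ vanishes, so the image of the forgetful map is contained in the asserted subgroup of $\Omega_{d+k}^{\theta_n}\otimes\bbQ$.

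For the reverse inclusion I would translate everything into rational (co)homology of classifying spaces. Both $\mtt_n(d)=\Th(\theta_n(d)^*U_d^\perp)$ and the Thom spectrum $\mathrm M\theta_n$ of the $\theta_n$-structure (so that $\pi_*(\mathrm M\theta_n)\cong\Omega_*^{\theta_n}$) are bounded below, hence split rationally as products of Eilenberg--MacLane spectra; combining the rational Hurewicz theorem with the Thom isomorphism --- a degree shift by $d$ in the first case, none in the second --- gives
\[\pi_k(\mtt_n(d))\otimes\bbQ\cong H_{d+k}(B_n(d);\bbQ)\quad\text{and}\quad\Omega_{d+k}^{\theta_n}\otimes\bbQ\cong H_{d+k}(B_n;\bbQ),\]
under both of which a bordism class is sent to the pushforward of its fundamental class along its structure map, so that pairing with a monomial in Pontryagin classes recovers the corresponding Pontryagin number. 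Chasing the definitions through the Pontryagin--Thom correspondence, the forgetful map becomes the map $(\iota_d)_*\colon H_{d+k}(B_n(d);\bbQ)\to H_{d+k}(B_n;\bbQ)$ induced by the canonical map $\iota_d\colon B_n(d)=B\ort(d)\scpr{n}\to B_n$. By \pref{Lemma}{lem:pontryagin} we may identify $H^*(B_n(d);\bbQ)=\bbQ[p_{\floor{n/4}+1},\dots,p_{\floor{d/2}}]$ and, in the colimit, $H^*(B_n;\bbQ)=\bbQ[p_{\floor{n/4}+1},p_{\floor{n/4}+2},\dots]$, so that $\iota_d^*$ is the quotient map killing precisely the generators $p_i$ with $i>\floor{d/2}$. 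Dualizing degree by degree (each graded piece being finite-dimensional), $(\iota_d)_*$ is injective with image the annihilator of the ideal $(p_i : i>\floor{d/2})$; under the Pontryagin-number pairing this is exactly the subgroup of classes all of whose Pontryagin numbers involving some $p_i$ with $i>\floor{d/2}$ vanish. Together with the first paragraph this identifies $\pi_k(\mtt_n(d))\otimes\bbQ$ with that subgroup.

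The only step that needs genuine care is the compatibility bookkeeping in the middle paragraph: one has to verify that, after the Thom and Hurewicz identifications, the geometric forgetful map really corresponds to $(\iota_d)_*$, and that pairing homology classes of $B_n$ against the polynomial generators $p_i$ really reproduces the Pontryagin numbers of the underlying manifolds. Granting these standard facts, the corollary is a formal consequence of \pref{Lemma}{lem:pontryagin}.
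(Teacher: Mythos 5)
Your proposal is correct and follows essentially the same route as the paper: rational Hurewicz for the bounded-below spectrum, the Thom isomorphism, and the identification $H^*(B_n(d);\bbQ)\cong\bbQ[p_{\floor{n/4}+1},\dots,p_{\floor{d/2}}]$ from the preceding lemma. You fill in a few details the paper leaves implicit --- the preliminary ``easy inclusion'' observation, the explicit identification of the forgetful map with $(\iota_d)_*$, and the degreewise finite-dimensionality needed to dualize and conclude injectivity --- but the argument is the same.
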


\begin{proof}
	Since the sphere spectrum is rationally an $H\bbQ$-spectrum by Serre's finiteness theorem, the rational Hurewicz-homomorphism of spectra $\pi_k(\mtt_n(d))\otimes\bbQ\to H_k(\mtt_n(d);\bbQ)$ is an isomorphism. Composing with the Thom-isomorphism we get an isomorphism $\pi_k(\mtt_n(d))\otimes\bbQ\to H_{k+d}(B_n(d),\bbQ)$. The claim follows from \pref{Lemma}{lem:pontryagin} by considering the natural map $H_{k+d}(B_n(d),\bbQ)\embeds H_{k+d}(B_n,\bbQ)\cong \Omega_{d+k}^{\theta_n}\otimes\bbQ$ (cf.\ \cite[Theorem 2.1]{krecklueck} for the last isomorphism).
\end{proof}

\begin{proof}[{Proof of \pref{Theorem}{thm:rational-mtw}}]
	Again we restrict to the case $d+k\equiv0(4)$. By the isomorphism $H_*(B_n,\bbQ)\cong \Omega_*^{\theta_n}\otimes\bbQ$, there are nontrivial classes $M_l\in\Omega_{4l}^{\theta_n}\otimes\bbQ$ for $l\in\{\floor{n/4} +1,\dots,2\floor{n/4} +1\}$. Note that by our assumption $2\floor{n/4} +1\le\floor{d/2}$. Since all Pontryagin classes of $M_l$ until $p_{\floor{n/4}+1}$ vanish, the only nontrivial  Pontryagin number of $M_l$ is $\scpr{p_l(M_l),[M_l]}$. By \cite[Theorem 4]{bergberg} this number is a multiple of the $\hat\calA$-genus. By the euclidian algorithm there exists a $q\in\bbR$ and an $r\in\{\floor{n/4} +1,\dots,2\floor{n/4} +1\}$ such that  $\frac{d+k}4 = q\cdot(\floor{n/4} + 1) + r$ and hence $M\coloneqq (M_{\floor{n/4} + 1})^q \times M_r$ has only Pontryagin classes of degree smaller than $\floor{d/2}$ and hence is an element of $\pi_k(\mtt_n(d))\otimes\bbQ$ by \pref{Corollary}{cor:mtt-bordism} with non-vanishing $\hat\calA$-genus which proves the theorem.
\end{proof}

\subsection{Index theoretic ingredients}\label{sec:index-theory}

\noindent This is mainly a recollection of index-theoretic arguments involved in the proof of our main result.  Even though this is just a recollection from \cite{berw}, we decided to keep it in here to give some context. There is no claim of originality for this entire section.

\subsubsection{$\ko$-theory}

Let us start by recalling the model for $\ko$-theory that was used in \cite[Chapter 3]{berw}, for a more detailed discussion see loc.cit.. Let $X$ be a space $H\to X$ be a Hilbert bundle with separable fibers. An \emph{operator family} is a fiber preserving and fiber-wise linear continuous map $H_0\to H_1$ of Hilbert bundles $H_0$ and $H_1$. It is determined by a family $(F_x)_{x\in X}$ of bounded  operators $F_x\colon(H_0)_x\to (H_1)_x$. $F$ is called \emph{adjointable} if $(F_x)^*_{x\in X}$ is an operator family and we denote the algebra of adjointable operators by $\lin_X(H)$. The $*$-ideal of compact operators on $X$ is denoted by $\kom_X(H)$. We call an adjointable operator family $F$ a \emph{Fredholm family} if there exists a $K\in\kom_X(H)$ such that $F+K$ is invertible.

\begin{defn}
	Let $V\to X$ be a Riemannian vector bundle and let $\tau\colon V\to V$ be a self-adjoint involution. A \emph{$\cl(V^\tau)$-Hilbert bundle} is a triple $(H,\iota,c)$ where $H\to X$ is a Hilbert bundle, $\iota\colon H\to H$ is a self-adjoint involution and $c=(c_x)_{x\in X}$ is a collection of maps $c_x\colon V_x\to \lin(H_x)$ such that
	\begin{enumerate}
		\item $c_x(v)\iota + \iota c_x(v) = 0$
		\item $c_x(V)^* = c_x(\tau v)$
		\item $c_x(v) \cdot c_x(v') + c_x(v') \cdot c_x(v) = -2\scpr{v,\tau v}$
		\item If $s\in\Gamma(X,V)$ is a continuous section, then $c_x(s(x))$ is an operator family
	\end{enumerate}
	We will omit $x$ in $c_x(v)$ when there is no chance of confusion.
\end{defn}

\noindent The opposite $\cl(V^\tau)$-Hilbert bundle is given by $(H,-\iota,-c)$. A \emph{$\cl(V^\tau)$-Hilbert bundle} with $V=V^+ \oplus V^-$ and $\tau(v_1,v_2) = (v_1,-v_2)$. It will also be called a \emph{$\cl(V^+\oplus V^-)$-Hilbert bundle} and if $V^+= \underline{\bbR^p}$ and $V^-=\underline\bbR^q$ we will abbreviate this by $\cl^{p,q}$. A \emph{$\cl(V^+\oplus V^-)$-module} is a finite-dimensional $\cl(V^+\oplus V^-)$-Hilbert bundle and a \emph{$\cl^{p,q}$-Fredholm family} is a Fredholm family on a $\cl^{p,q}$-Hilbert bundle that is $\cl^{p,q}$-linear and anti-commutes with the grading, \ie $Fc(v) = c(v)F$ and $F\iota = -\iota F$.

\begin{defn}\label{def:concordance}
	Let $(X,Y)$ be a space pair. A \emph{$(p,q)$-cycle on $X$} is a tuple $(H,\iota,c,F)$ where $(H,\iota,c)$ is a $\cl^{p,q}$-Hilbert bundle and $F$ is a $\cl^{p,q}$-Fredholm family. A \emph{relative $(p,q)$-cycle} is a $(p,q)$-cycle on $X$ such that $F$ is invertible over $Y$. A \emph{concordance between $(H_0,\iota_0,c_0,F_0)$ and $(H_1,\iota_1,c_1,F_1)$} is a relative $(p,q)$-cycle $(H,\iota,c,F)$ on $(X,Y)\times[0,1]$ such that $(H,\iota,c,F)|_{X\times\{i\}}=(H_i,\iota_i,c_i,F_i)$. A $(p,q)$-cycle is called \emph{acyclic} if $F$ is invertible.
\end{defn}

\noindent We will sometimes abbreviate $(H,\iota,c,F)$ by $(H,F)$ or $x\mapsto (H_x,F_x)$.

\begin{defn}
	For a pair $(X,Y)$ of a paracompact space $X$ and a closed subspace $Y$ we define
	\[F^{p,q}(X,Y) \coloneqq \frac{\{\text{concordance classes of relative }(p,q)\text{-cycles}\}}{\{\text{concordance classes of acyclic ones}\}}.\]
	This is an abelian group via direct sum and the inverse of $[H,\iota,c,F]$ is given by $[H, -\iota,-c,F] = [H,\iota,-c,-F]$.
\end{defn}
\noindent For  space pairs $(A,B)$ and $(X,Y)$ let $(A,B)\times (X,Y)\coloneqq (A\times X, A\times Y\cup B\times X)$. Let $\bbI\coloneqq[-1,1]$. We define $\Omega F^{p,q}(X,Y)\coloneqq F^{p,q}((X,Y)\times(\bbI,\partial\bbI))$. There is an isomorphism of abelian groups
\begin{align*}
	F^{p,q}(X,Y)&\too\Omega F^{p-1,q}(X,Y)\\
		(H,F)&\mapsto \bigl( (x,s)\mapsto (H_x,F_x+s\iota_xc(e_1)_x)\bigr)
\end{align*}

A $\cl^{p,q}$-Hilbert space is called \emph{ample} if it contains every finite dimensional irreducible $\cl^{p,q}$-Hilbert space with infinite multiplicity. We fix an ample $\cl^{p,q}$-Hilbert space $U$ and we define $\fred^{p,q}$ to be the space of all $\cl^{p,q}$-Fredholm  operators on $U$ with the norm topology and $G^{p,q}$ the (contractible) subspace of invertible ones. We have the following theorem.
\begin{thm}[{\cite[Theorem 3.3 and below]{berw}}]
	Let $(X,Y)$ be a CW-pair. Then the following holds\begin{enumerate}
		\item There is an isomorphism $\ko^{q-p}(X,Y) \congarrow F^{p,q}(X,Y)$.
		\item Every class $b\in F^{p,q}(X,Y)$ corresponds to a unique homotopy class of a map $(X,Y)\to(\Omega^{\infty+p-q}\ko,*)$ which we call the \emph{homotopy-theoretic realization of $b$}
	\end{enumerate}
\end{thm}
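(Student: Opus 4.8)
The plan is to exhibit $F^{p,q}(-,-)$ as a representable functor on CW-pairs, with representing space the space $\fred^{p,q}$ of $\cl^{p,q}$-Fredholm operators on a fixed ample module $U$ (with the contractible subspace $G^{p,q}\subset\fred^{p,q}$ serving as the distinguished point for the relative theory), and then to invoke the classical identification $\fred^{p,q}\simeq\Omega^{\infty+p-q}\ko$ coming from the Atiyah--Bott--Shapiro/Karoubi description of real $K$-theory by Clifford modules. Granting these two inputs, part (1) follows by comparing $[(X,Y),(\fred^{p,q},G^{p,q})]$ with the cohomology theory represented by $\Omega^{\infty+p-q}\ko$, which is $\ko^{q-p}(X,Y)$; part (2) is then immediate, since a relative $(p,q)$-cycle is classified by a map of pairs $(X,Y)\to(\fred^{p,q},G^{p,q})\simeq(\Omega^{\infty+p-q}\ko,*)$ and $G^{p,q}$ being contractible makes this a well-defined based homotopy class.

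First I would check that $F^{p,q}$ is homotopy invariant and exact on CW-pairs; homotopy invariance is built into the definition via concordance, and exactness is the usual long exact sequence for such cycle groups. For the representability step one uses the tautological $\cl^{p,q}$-Fredholm family over $\fred^{p,q}$ (the trivial bundle $U$ with its Clifford action and the evaluation family), which restricts over $G^{p,q}$ to an acyclic family. Given a relative $(p,q)$-cycle $(H,\iota,c,F)$ on $(X,Y)$ I would proceed in three moves: (i) after adding a suitable acyclic cycle one may assume $(H,\iota,c)$ is fibrewise ample; (ii) a Kuiper-type theorem — contractibility of the $\cl^{p,q}$-linear unitary group of $U$ — shows that over a CW-complex every ample $\cl^{p,q}$-Hilbert bundle is trivial, with trivialization unique up to homotopy, so $(H,\iota,c)\cong(\underline U,\iota_U,c_U)$; (iii) under such a trivialization $F$ becomes a map $X\to\fred^{p,q}$ sending $Y$ into $G^{p,q}$, since $F$ is invertible over $Y$. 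Concordances translate into homotopies of such maps of pairs, giving a natural bijection $F^{p,q}(X,Y)\cong[(X,Y),(\fred^{p,q},G^{p,q})]$ that is additive for block sums.

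Next I would identify $\fred^{p,q}$ up to homotopy. The isomorphisms $F^{p,q}(X,Y)\congarrow\Omega F^{p-1,q}(X,Y)$ recalled above (given by $F\mapsto F+s\,\iota\,c(e_1)$) show that the spaces $\fred^{p,q}$ assemble into an $\Omega$-spectrum as $p$ varies, while the Clifford periodicities $\cl^{p+1,q+1}\cong\cl^{p,q}\otimes\mathrm{End}(\bbR^2)$ and $\cl^{p+8,q}\cong\cl^{p,q}\otimes\mathrm{End}(\bbR^{16})$ make it $8$-periodic. Computing $\pi_0\fred^{p,q}$ via the Atiyah--Bott--Shapiro construction identifies it with $\ko^{q-p}(\pt)$, and comparing the resulting $\Omega$-spectrum with a standard model of $\ko$ — equivalently, transporting the Bott class through the above loop-space isomorphisms — yields a natural equivalence $\fred^{p,q}\simeq\Omega^{\infty+p-q}\ko$ restricting to $G^{p,q}\simeq *$. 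Combining this with the bijection from the previous step and with naturality gives (1), and (2) drops out.

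The main obstacle is precisely this last identification $\fred^{p,q}\simeq\Omega^{\infty+p-q}\ko$: it is essentially Karoubi's theorem, and a fully rigorous argument requires either an explicit Bott periodicity equivalence for Clifford-linear Fredholm operators or a careful comparison with another $\Omega$-spectrum model of $KO$-theory. Everything else — homotopy invariance, the Kuiper-type contractibility used to trivialize Clifford--Hilbert bundles, and the bookkeeping matching concordances of relative cycles with homotopies of maps of pairs — is routine, provided one is careful with paracompactness and partitions of unity, which is why the statement is confined to CW-pairs.
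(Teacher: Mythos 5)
The paper does not prove this statement: it is quoted verbatim as Theorem 3.3 (and the discussion following it) of Botvinnik--Ebert--Randal-Williams \cite{berw}, as part of the ``recollection'' Section~\ref{sec:index-theory}, for which the authors explicitly disclaim originality. So there is no in-paper proof to compare against; what follows is a comparison with the argument in the cited source.

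Your outline matches the route taken in \cite{berw}: stabilize a relative $(p,q)$-cycle so that the underlying $\cl^{p,q}$-Hilbert bundle is fibrewise ample, trivialize it using contractibility of the $\cl^{p,q}$-linear unitary group of an ample module (a Clifford-equivariant Kuiper theorem), and thereby identify $F^{p,q}(X,Y)$ with $[(X,Y),(\fred^{p,q},G^{p,q})]$; then invoke the Atiyah--Singer/Karoubi identification of $\fred^{p,q}$ as a representing space for $\ko^{q-p}$, with the loop-space isomorphisms $F^{p,q}\cong\Omega F^{p-1,q}$ matching the $\Omega$-spectrum structures. You correctly flag the real input---Clifford-linear Bott periodicity---and correctly isolate the Kuiper-type contractibility as the one technical prerequisite for the representability step. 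Two small remarks: the appeal to ``exactness'' is superfluous in your own argument, since you construct the bijection with $[(X,Y),(\fred^{p,q},G^{p,q})]$ directly rather than deducing representability abstractly; and for that homotopy set to behave well one implicitly uses that $\fred^{p,q}$ with the norm topology has the homotopy type of a CW complex (it is an ANR). Neither affects correctness.
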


\subsubsection{Dirac operators}
Let $W^d$ be a Riemannian $\Spin$-manifold. So, we have a $\Spin(d)$-principal bundle $P\to W$ and an isometry $\eta\colon P\times_{\Spin(d)}\bbR^d\to TW$. The \emph{spinor-bundle} $\frakS_W$ of $W$ is the associated fiber-wise irreducible real $\cl(TW^+\oplus\bbR^{0,d})$-module. The Levi-Civita connection on $W$ induces a canonical connection $\nabla$ on $\frakS_W$. The \emph{Dirac operator} $\frakD$ is given by
\[\frakD\colon\Gamma(W,\frakS_W)\overset{\nabla}\too\Gamma(W;TW\otimes\frakS_W)\overset{c}\too\Gamma(W;\frakS_W).\]
$\frakD$ is a linear, formally self-adjoint, elliptic differential operator of order 1 and anti-commutes with the grading and the Clifford multiplication of $\bbR^{0,d}$. Hence, after changing the $\cl^{0,d}$-multiplication to a $\cl^{d,0}$-multiplication via replacing $c(v)$ by $\iota c(v)$, the Dirac operator $\frakD$ becomes $\cl^{d,0}$-linear. The relevance of the Dirac operator to positive scalar curvature geometry originates from the Schrödinger-Lichnerowicz formula:
\[\frakD^2 = \nabla^*\nabla + \frac{\scal}4,\]
which forces the Dirac operator to be invertible if the scalar curvature is positive. Now, let $X$ be a paracompact Hausdorff space and $\pi\colon E\to X$ a fiber bundle with possibly non-compact $E_x\coloneqq\pi^{-1}(\{x\})$ of dimension $d$ such that the vertical tangent bundle $T_{(v)}E$ admits a $\Spin$-structure. A fiber-wise Riemannian metric $g_x$ gives rise to a Spinor-bundle $\frakS_E$, a $\cl(T_{(v)}E^+\oplus\bbR^{0,d})$-module that restricts to the Spinor-bundle $\frakS_x\to E_x$ with Dirac operator $\frakD_x$ in each fiber. If the fibers are compact with boundary diffeomorphic to $N$ and the boundary bundle is trivial as a $\Spin$-bundle, \ie $\partial E = X\times \partial N$, we can consider the \emph{elongation} of $(E,g)$. This is defined to be the bundle $\hat E\coloneqq E\cup_{\partial E}(X\times[0,\infty)\times N)$ with the metric $(dt^2 + g_x)$ on the added cylinders.

\begin{defn}[{\cite[Definition 3.4]{berw}}]
Let $t\colon E\to \bbR$ be fiber-wise smooth such that $(\pi,t)\colon E\to X\times\bbR$ is proper. Let $a_0<a_1\colon X\to[-\infty,\infty]$ be continuous functions. We define
\[X\times(a_0,a_1)\coloneqq \{(x,s)\in X\times\bbR \mid  a_0(x)<s<a_1(x)\}\]
and $E_{(a_0,a_1)}\coloneqq (\pi,t)^{-1}(X\times(a_0,a_1))$. We say the bundle $E$ is \emph{cylindrical over $(a_0,a_1)$} if there exists a bundle isomorphism $E_{(a_0,a_1)}\cong (X\times \bbR\times M)_{(a_0,a_1)}$ for some $(d-1)$-manifold $M$. $E$ is said to have \emph{cylindrical ends}, if $E$ is cylindrical over $(-\infty,a_-)$ and $(a_+,\infty)$ for some functions $a_-,a_+\colon X\to\bbR$. If $E$ has cylindrical ends and there is a fiber-wise Riemannian metric $g=(g_x)_{x\in X}$ that is cylindrical over the ends, we say that $(g_x)$ has \emph{positive scalar curvature at infinity} if there exists a function $\epsilon\colon X \to (0,\infty)$ such that on the ends of $E_x$ the metric $g_x$ has scalar curvature $\ge\epsilon(x)$.
\end{defn}

\noindent Let $L^2(E,\frakS_E)_x$ denote the Hilbert space of $L^2$-sections of the spinor bundle $\frakS_x\to E_x$. These assemble to a $\cl^{d,0}$-Hilbert bundle over $X$. The Dirac operator is densely defined symmetric unbounded operator on $L^2(E,\frakS_E)_x$ and its closure is self-adjoint. Applying the functional calculus for $f(x)=\frac{x}{\sqrt{1+x^2}}$ we get the \emph{bounded transform}
\[F_x\coloneqq \frac{\frakD_x}{\sqrt{1+\frakD_x^2}}.\]
If $g$ has positive scalar curvature at infinity this is a bounded $\cl^{d,0}$-Fredholm operator. The collection $(F_x)$ is a $\cl^{d,0}$-Fredholm family over $X$. We define $\dirac(E,g)$ to be the $(d,0)$-cycle given by $x\mapsto(L^2(E,\frakS_E)_x,F_x)$ and if we assume that $\frakD_y$ is invertible for all $y\in Y$ we obtain a class
\begin{align*}
	\ind(E,g)\coloneqq[\dirac(E,g)]\in\ko^{-d}(X,Y).
\end{align*}
We have the following Lemma.
\begin{lem}[{\cite[Lemma 3.7]{berw}}]
	Let $\pi\colon E\to X$ be a $\Spin$-manifold bundle with cylindrical ends and let $g_0, g_1$ be fiber-wise metrics with psc at infinity and agree on the ends. Let $Y\subset X$ and assume that $g_0$ and $g_1$ agree and have invertible Dirac operators over $Y$. Then
	\[\ind(E,g_0) = \ind(E,g_1)\in \ko^{-d}(X,Y).\]
\end{lem}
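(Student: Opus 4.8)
The plan is to prove this by the standard parametrised homotopy-invariance trick: interpolate linearly between the two fibre-wise metrics and recognise the resulting family as a single relative cycle over the product pair $(X,Y)\times[0,1]$, which is by definition a concordance.

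First I would set $g_s \coloneqq (1-s)g_0 + s g_1$ for $s\in[0,1]$. Since a convex combination of fibre-wise inner products is again a fibre-wise inner product, this is a fibre-wise Riemannian metric on the $\Spin$-manifold bundle $\pi\times\id\colon E\times[0,1]\to X\times[0,1]$, which inherits cylindrical ends from $E$ (namely the products of the ends of $E$ with $[0,1]$). Because $g_0$ and $g_1$ are assumed to agree on the ends of $E$, the family $(g_s)$ is in fact constant in $s$ over the ends, hence cylindrical there and with positive scalar curvature at infinity — one may use the same function $\epsilon\colon X\to(0,\infty)$, pulled back to $X\times[0,1]$, that witnesses psc at infinity for $g_0$. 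This allows me to form the $(d,0)$-cycle $\dirac(E\times[0,1],(g_s))$ over $X\times[0,1]$.

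Next I would observe that over $Y\times[0,1]$ the metric $g_s$ equals $g_0=g_1$ independently of $s$, so by hypothesis the fibre-wise Dirac operator is invertible over all of $Y\times[0,1]$; hence $\dirac(E\times[0,1],(g_s))$ is a \emph{relative} $(d,0)$-cycle on $(X,Y)\times[0,1]$ in the sense of \pref{Definition}{def:concordance}. Its restrictions to $X\times\{0\}$ and $X\times\{1\}$ are, by construction, $\dirac(E,g_0)$ and $\dirac(E,g_1)$ respectively, so it is a concordance between these two relative cycles. Therefore $[\dirac(E,g_0)]=[\dirac(E,g_1)]$ in $F^{d,0}(X,Y)$, and under the isomorphism $F^{d,0}(X,Y)\cong\ko^{-d}(X,Y)$ this is exactly the claimed equality $\ind(E,g_0)=\ind(E,g_1)$.

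The only step that needs care — and the single place where I would appeal to the analytic machinery recalled above rather than to something elementary — is the assertion that $\dirac(E\times[0,1],(g_s))$ really is a cycle, i.e.\ that the bounded transforms of the Dirac operators of a varying fibre-wise metric assemble into a norm-continuous $\cl^{d,0}$-Fredholm family on a genuine Hilbert bundle (the subtlety being that the $L^2$-spinor spaces depend on the metric and must first be trivialised). This, however, is precisely what is packaged into the construction of $\dirac(E,g)$ for a bundle equipped with a fibre-wise metric as recalled just before the statement, now applied to the bundle $E\times[0,1]\to X\times[0,1]$; so beyond verifying — as above — that $(g_s)$ has cylindrical ends and positive scalar curvature at infinity, and that it is the pointwise convex combination of the given metrics so that the endpoint restrictions are literally $\dirac(E,g_i)$, there is nothing further to check.
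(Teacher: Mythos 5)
Your proof is correct and is essentially the argument in \cite[Lemma 3.7]{berw}, which the present paper simply cites: linearly interpolate to get a fibre-wise metric on $E\times[0,1]\to X\times[0,1]$, note that the interpolation is constant in $s$ over the ends and over $Y$ (so it is cylindrical with psc at infinity and invertible over $Y\times[0,1]$), and read off the resulting $(d,0)$-cycle as a concordance between $\dirac(E,g_0)$ and $\dirac(E,g_1)$ in the sense of \pref{Definition}{def:concordance}. You also correctly identify the one nontrivial analytic input, namely that $\dirac(-,-)$ really produces a norm-continuous Fredholm family on a Hilbert bundle, and correctly observe that this is already packaged in the construction and needs no re-proof.
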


\noindent In particular, if $g$ is a fiber-wise metric only defined over the ends we still get a well defined class $\ind(E,g)$ and if $E$ is closed, then $\ind(E,g)$ does not depend on $g$. Also note that for the bundle $E^\op$ with the opposite $\Spin$-structure we have $\ind(E^\op,g) = - \ind(E,g)$.
\vspace{0.4em}
{\center{\emph{From now on let $\calF$ be a Riemannian functor that implies positive scalar curvature.} }}

\subsubsection{The two definitions of $\inddiff$}

Let again $W^d$ be a $\Spin$-manifold and let $h\in\calR(\partial W)$ such that $\calF(W)_h\not=\emptyset$. Since $\calF$ implies positive scalar curvature we deduce that $h\in\calR_\psc(\partial W)$. Let us consider the trivial $W$-bundle over $\bbI\times\calF(W)_h\times\calF(W)_h$. A fiber-wise Riemannian metric $G$ is given by  $G_{(t,g_0,g_1)}\coloneqq\frac{1-t}2g_0 + \frac{1+t}{2}g_1$ in the fiber over $(t,g_0,g_1)$. The elongation has psc at infinity and invertible Dirac operators for $t=\pm1$. We therefore get an element
\[\inddiff\coloneqq\ind(\bbI\times\calF(W)_h\times\calF(W)_h\times W, G)\in \Omega\ko^{-d}(\calF(W)_h\times\calF(W)_h,\Delta),\]
where $\Delta$ denotes the diagonal. This is Hitchin's definition of the index-difference, cf.\ \cite{hitchin_spinors}. Fixing a base-point $g\in\calF(W)_h$ we obtain an element $\inddiff_g\in\Omega\ko^{-d}(\calF(W)_h, g)$ and a homotopy theoretic realization
\begin{align}\label{equ:inddiff}
	\inddiff^\calF_g\colon(\calF(W)_h,g)\to (\Omega^{\infty+d+1}\ko,*).
\end{align}
\begin{remark}\label{rem:compare-f-to-psc}
Note that the definition of $\ind$ only depends on the Dirac-operator associated to the metric and hence we get the following homotopy commutative triangle
\begin{center}
\begin{tikzpicture}
\node (0) at (0,1.2){$(\calF(W)_h,g)$};
\node (1) at (0,0){$(\calR_\psc(W)_h,g)$};
\node (2) at (6,0) {$(\Omega^{\infty+d+1}\ko,*)$};

\draw[right hook->] (0) to (1);
\draw[->] (1) to node[below]{$\inddiff^{\calR_\psc}_g$} (2);
\draw[->] (0) to node[above, sloped]{$\inddiff^\calF_g$} (2);
\end{tikzpicture}
\end{center}
\end{remark}
\noindent The second definition of the index-difference goes back to Gromov--Lawson \cite{gromovlawson_fundamentalgroup}. Let $W^d$ be a closed $\Spin$-manifold and consider the trivial $\bbR\times W$-bundle over $\calF(W)\times\calF(W)$. Choose a  smooth function $\psi\colon\bbR\to [0,1]$ that is constantly equal to $1$ on $[1,\infty)$ and equal to $0$ on $(-\infty,0]$. We get a fiber-wise metric $G\coloneqq dt^2 + (1-\psi(t))g_0 + \psi(t)g_1$ in the fiber over $(g_0,g_1)$. This has positive scalar curvature at infinity  and we hence get an element
\[\inddiff^{GL}\coloneqq\ind(\calF(W)\times\calF(W)\times\bbR\times W, G)\in\ko^{-d-1}(\calF(W)\times\calF(W),\Delta).\]
Again, after fixing a base point $g\in\calF(W)$ we get $\inddiff^{GL}_g\in\ko^{-d-1}(\calF(W),g)$ and a homotopy theoretic realization
\begin{align}\label{equ:inddiffgl}
	\inddiff^{GL,\calF}_g\colon(\calF(W)_h,g)\to (\Omega^{\infty+d+1}\ko,*).
\end{align}

\begin{thm}[Spectral flow index theorem {\cite[Theorem A]{ebert_inddiff}}]\label{thm:spectral-flow}
	For every closed $\Spin$-manifold $W$ the maps (\ref{equ:inddiff}) and (\ref{equ:inddiffgl}) are weakly homotopic.
\end{thm}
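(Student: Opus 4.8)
The plan is to exhibit Hitchin's and Gromov--Lawson's index differences as two representatives of one $\ko$-theory class, by rewriting both in terms of the Dirac operator on the cylinder $\bbR\times W$ and comparing them through the homotopy-invariance of $\ind$. Since $\Omega^{\infty+d+1}\ko$ is an infinite loop space, two maps out of $(\calF(W)_h,g)$ are weakly homotopic as soon as they represent the same element of $\ko^{-d-1}(\calF(W)_h,g)$, so by the uniqueness of the homotopy-theoretic realization it suffices to show $\inddiff_g=\inddiff^{GL}_g$; and for this it is enough to compare the two-variable classes $\inddiff$ and $\inddiff^{GL}$ and then restrict along a fixed base point. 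Using the suspension isomorphism $F^{d+1,0}\congarrow\Omega F^{d,0}$, that is $\ko^{-d-1}\cong\Omega\ko^{-d}$, both classes may be regarded as elements of $\Omega\ko^{-d}(\calF(W)_h\times\calF(W)_h,\Delta)$.

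Unwinding the definitions, Hitchin's class is the concordance class of the relative $(d,0)$-cycle over $(\calF(W)_h\times\calF(W)_h,\Delta)\times(\bbI,\partial\bbI)$ which over $(g_0,g_1,t)$ is the bounded transform of the Dirac operator of $W$ for the metric $\tfrac{1-t}2g_0+\tfrac{1+t}2g_1$; this is the \enquote{family spectral flow class} of the straight-line path of Dirac operators from $\frakD_W^{g_0}$ to $\frakD_W^{g_1}$. For Gromov--Lawson, one uses the standard splitting $\frakS_{\bbR\times W}|_{\{t\}\times W}\cong\frakS_W$ together with the change of Clifford conventions $c(v)\rightsquigarrow\iota c(v)$ turning $\cl^{0,d}$-multiplication into $\cl^{d,0}$-multiplication to write the Dirac operator of the metric $dt^2+(1-\psi(t))g_0+\psi(t)g_1$ on $\bbR\times W$ in the form $c(\partial_t)(\partial_t+A_t)$, where $A_t$ is a self-adjoint operator on $L^2(W,\frakS_W)$ agreeing with $\frakD_W^{(1-\psi(t))g_0+\psi(t)g_1}$ up to a zeroth-order term supported on the compact set where $\psi'\neq0$; its index, computed via $\ko^{-d-1}\cong\Omega\ko^{-d}$, is the Gromov--Lawson class.

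The heart of the proof is the identity \emph{family spectral flow $=$ cylinder index}. First, since the space of Riemannian metrics on $W$ is convex, the straight-line interpolation and the $\psi$-warped interpolation from $g_0$ to $g_1$ are homotopic relative to their endpoints, so by the homotopy-invariance of $\ind$ (\cite[Lemma 3.7]{berw}) we may use one fixed family $\{g_t\}$ in both pictures; the compactly supported zeroth-order discrepancy between $A_t$ and $\frakD_W^{g_t}$ preserves positive scalar curvature at infinity and invertibility over $\Delta$, and hence by the same lemma does not change the class. What remains is a parametrized form of the Atiyah--Patodi--Singer spectral-flow index theorem, asserting that the $\Omega\ko^{-d}$-class of the path $\{\frakD_W^{g_t}\}$ equals the $\ko^{-d-1}$-class of $\partial_t+\frakD_W^{g_t}$ on $\bbR\times W$; one proves this inside the $\ko$-model of \cite{berw}, for instance by an adiabatic-limit argument in which the cylinder is stretched so that $\partial_t+A_t$ decouples at every scale, or by cutting $\bbR\times W$ along $\{0,1\}\times W$, imposing Atiyah--Patodi--Singer boundary conditions, and observing that the two invertible product ends contribute nothing to the index. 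Restricting the resulting equality of two-variable classes and fixing the base point $g$ yields $\inddiff_g=\inddiff^{GL}_g$, and hence \eqref{equ:inddiff} and \eqref{equ:inddiffgl} are weakly homotopic.

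The main obstacle is precisely this last step. Making the splitting $\frakD_{\bbR\times W}=c(\partial_t)(\partial_t+A_t)$ precise requires careful bookkeeping of the Clifford-module structures when passing from the $d$-manifold $W$ to the $(d+1)$-manifold $\bbR\times W$; one must also verify continuity and uniform Fredholmness of all operator families over the merely $CW$-dominated, infinite-dimensional parameter space $\calF(W)_h\times\calF(W)_h$, and---most importantly---establish the index-equals-spectral-flow statement at the level of $\ko$-cycles rather than for a single operator, so that it produces an honest equality of $\ko$-classes with the correct sign. By comparison, the convexity reductions and the appeals to homotopy-invariance of $\ind$ are routine.
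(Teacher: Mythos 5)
This theorem is not proved in the paper at all: it is stated as a citation to \cite[Theorem A]{ebert_inddiff}, which is an entire paper devoted to establishing precisely this equivalence. So there is no internal proof to compare your argument against; rather, the question is whether your sketch constitutes a proof, and it does not.

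Your outline correctly identifies the shape of the problem: Hitchin's $\inddiff$ is a family spectral-flow class over $(\bbI,\partial\bbI)$, Gromov--Lawson's $\inddiff^{GL}$ is a cylinder index, and one must show they agree in $\ko^{-d-1}(\calF(W)\times\calF(W),\Delta)$. But the step you yourself call ``the heart of the proof'' --- a parametrized Atiyah--Patodi--Singer type identity equating the $\Omega\ko^{-d}$-class of the path $\{\frakD_W^{g_t}\}$ with the $\ko^{-d-1}$-class of $\partial_t+\frakD_W^{g_t}$, valid on the level of relative $(d,0)$-cycles over the infinite-dimensional base $\calF(W)\times\calF(W)$ and with the correct sign --- is left as two unexecuted suggestions (``adiabatic limit'' or ``cutting with APS boundary conditions''). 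That identity is not a routine deformation; it is the main theorem of \cite{ebert_inddiff}, and its proof occupies that whole paper. Asserting it as a black box means you have restated, not proved, the result.

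There is also a technical issue you pass over silently: the Hilbert spaces $L^2(W,\frakS_W^{g})$ depend on $g$, both through the volume form and through the identification of spinor bundles for different metrics. Writing $\frakD_{\bbR\times W}=c(\partial_t)(\partial_t+A_t)$ with $A_t$ ``agreeing with $\frakD_W^{g_t}$ up to a zeroth-order term'' requires trivializing this varying Hilbert bundle and tracking the unitaries that compare $\frakS_W^{g_0}$ to $\frakS_W^{g_t}$; without that, the two cycles you wish to compare do not even live on the same Hilbert bundle over the parameter space, and the appeal to \cite[Lemma 3.7]{berw} does not apply directly. This is one of the genuine sources of work in Ebert's proof, and omitting it leaves a real gap. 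In short: the strategy is pointed in the right direction, but what remains unproved is the entire content of the theorem.
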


\begin{remark}
	Recall that two maps $f_0,f_1\colon X\to Y$ are called \emph{weakly homotopic} if for every map $\alpha\colon K\to X$ from a finite $CW$-complex $K$ we have that $f_0\circ \alpha$ is homotopic to $f_1\circ \alpha$. Weakly homotopic maps induce equal maps on homotopy groups.
\end{remark}

\subsubsection{The additivity  theorem for the index-difference}
One of the  main tools for computing the index-difference is the additivity theorem. In order to state it we need some notation. Let $X$ be a paracompact Hausdorff space and let $E\to X$, $E'\to X$ be two Riemannian $\Spin$-manifold bundles of fiber dimension $d$. Let $g,g'$ be metrics with cylindrical ends  such that $E$ and $E'$ have psc at infinity. Assume that  there exist functions $a_0<a_1\colon X\to \bbR$ such that $E$ and $E'$ are cylindrical over $X\times(a_0,a_1)$ and agree there $E_{(a_0,a_1)} = E'_{(a_0,a_1)}$. Assume that the Dirac operators are invertible over $E_{(a_0,a_1)}$. Let
\[E_0\coloneqq E_{(-\infty,a_1)}; \quad E_1\coloneqq E_{(a_0,\infty)}; \quad E_2\coloneqq E'_{(-\infty,a_1)};\quad E_2\coloneqq E'_{(a_0,\infty)}\]
and for $(i,j)\in\{(0,1),(2,3),(0,3),(1,2)\}$ let $E_{ij}\coloneqq E_i\cup E_j$.
\begin{thm}[{\cite[Theorem 3.12]{berw}}]
	\[\ind(E_{01}) + \ind(E_{23}) = \ind(E_{03}) + \ind(E_{12})\in\ko^{-d}(X)\]
	If $E_{01}$ and $E_{23}$ have invertible Dirac operators over a closed subspace $Y\subset X$, then this equation holds in $\ko^{-d}(X,Y)$.
\end{thm}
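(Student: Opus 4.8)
The plan is to reduce the statement to a standard "rolling up" or linear-algebra identity for Fredholm families, in the spirit of the additivity theorem in $KK$-theory, but carried out in the concrete $(p,q)$-cycle model of $\ko$-theory recalled above. First I would observe that, because $E$ and $E'$ agree on the cylinder $E_{(a_0,a_1)}$ and the Dirac operators are invertible there, the four pieces $E_0,E_1,E_2,E_3$ can be regarded as bundles with one cylindrical end on which the Dirac operator is invertible; cutting along the slice $t=a$ for $a\in(a_0,a_1)$ and using the invertibility, the $L^2$-spinor Hilbert bundle of each glued manifold $E_{ij}$ decomposes (up to a compact perturbation that does not change the $\ko$-class) as a direct sum coming from the two halves. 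Concretely, one writes down a parametrix for $\frakD$ on $E_{ij}$ built from the half-space parametrices and the invertible cylinder operator, exactly as in the proof of the relative index theorem; this is where the hypothesis "cylindrical and agreeing over $(a_0,a_1)$ with invertible Dirac operator" is used.

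Next I would package the bookkeeping. Set $a=[\dirac(E_0)]$, $b=[\dirac(E_1)]$, $a'=[\dirac(E_2)]$, $b'=[\dirac(E_3)]$ as "half-open" cycles; the splitting from the previous step gives, in $F^{d,0}(X)$ (equivalently $\ko^{-d}(X)$),
\[
[\dirac(E_{01})] = a \boxplus b,\qquad [\dirac(E_{23})] = a' \boxplus b',\qquad [\dirac(E_{03})] = a \boxplus b',\qquad [\dirac(E_{12})] = b \boxplus a',
\]
where $\boxplus$ denotes the gluing-along-the-common-invertible-cylinder operation. The content is then that $\boxplus$ is, on the level of $\ko$-classes, an abelian operation compatible with the additive structure of $F^{d,0}$: $(a\boxplus b)\oplus(a'\boxplus b') \cong (a\boxplus b')\oplus(b\boxplus a')$, because both sides are $a\oplus b\oplus a'\oplus b'$ after one notices that $\boxplus$ with the fixed invertible cylinder piece differs from $\oplus$ only by an acyclic summand (the invertible cylinder contributes nothing) together with a homotopy rotating the clutching. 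This is the standard Eckmann--Hilton / rotation argument already invoked elsewhere in the paper. Summing and cancelling yields
\[
\ind(E_{01}) + \ind(E_{23}) = \ind(E_{03}) + \ind(E_{12})\in\ko^{-d}(X).
\]

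For the relative version, I would run the identical argument fiberwise over the pair $(X,Y)$: the hypothesis that $E_{01}$ and $E_{23}$ have invertible Dirac operators over $Y$ means all four cycles entering the identity are invertible over $Y$, so every step above takes place in the category of relative $(d,0)$-cycles on $(X,Y)$ and the conclusion lands in $\ko^{-d}(X,Y)$; naturality of the splitting and of the rotation homotopy under restriction to $Y$ is automatic since they are built canonically out of the Dirac operators and the fixed collar identification. The main obstacle, and the only genuinely technical point, is the first step: making the decomposition $[\dirac(E_{ij})] = [\dirac(E_i)]\boxplus[\dirac(E_j)]$ precise with explicit control of the compact error terms and with continuity in the parameter $x\in X$ (including near points of $Y$), i.e. constructing the parametrices and the concordances in families. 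Everything after that is formal manipulation in $F^{d,0}$. Since this is exactly \cite[Theorem 3.12]{berw}, I would in fact simply cite it; the sketch above indicates how one would reprove it if needed.
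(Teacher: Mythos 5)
The paper does not prove this statement. It appears in the recollection of index-theoretic preliminaries, which is prefaced by an explicit disclaimer that the entire section is a recollection from \cite{berw} with ``no claim of originality,'' and the theorem is stated purely as a citation to \cite[Theorem 3.12]{berw}. Since your proposal also ultimately falls back on citing that reference, you match the present paper on the level of what it actually commits to.

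As for the sketch you give in support of the citation: the ingredients you name are the right ones (cutoff functions subordinate to the two halves, invertibility of the Dirac operator over the common cylinder $E_{(a_0,a_1)}$ controlling the error term, a rotation/clutching homotopy), but the bookkeeping through ``half-open cycles'' $a=[\dirac(E_0)]$, $b=[\dirac(E_1)]$, etc., is not how the cited argument runs, and as written it has a gap. First, $E_0=E_{(-\infty,a_1)}$ is a bundle of manifolds with boundary at $t=a_1$; in the model used here $\dirac(-)$ is only defined for bundles with cylindrical ends and positive scalar curvature at infinity, so to make $\dirac(E_0)$ a $(d,0)$-cycle you must attach a half-infinite cylinder at $t=a_1$ and prove Fredholmness, which already needs an auxiliary spectral argument on the slice. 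Second, and more seriously, the asserted identity $\ind(E_{ij}) = a\boxplus b$ together with ``$\boxplus$ differs from $\oplus$ only by an acyclic summand'' is itself a cut-and-paste statement of essentially the same depth as the theorem you are trying to prove, and your plan would have to establish four instances of it. The actual proof in \cite{berw} (built on Bunke's $K$-theoretic relative index theorem) never introduces half-indices: one works directly with the Hilbert bundles $L^2(E_{01})\oplus L^2(E_{23})$ and $L^2(E_{03})\oplus L^2(E_{12})$, constructs an isometry between them out of a partition of unity $(\chi_0,\chi_1)$ with $\chi_0^2+\chi_1^2=1$ supported on the two halves (this makes sense because all four bundles literally agree over $E_{(a_0,a_1)}$), and exhibits one operator homotopy, with error controlled by the invertibility hypothesis, giving a concordance of $(d,0)$-cycles. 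That single rotation gives the identity in one step and avoids both issues above. The relative version over $(X,Y)$ then follows as you indicate, since the isometry and the homotopy are natural and restrict to $Y$. If you want to turn your sketch into a proof, rework the bookkeeping to compare the two direct-sum cycles directly rather than trying to split each $\ind(E_{ij})$ into two well-defined pieces.
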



\noindent There is the following restatement in terms of the index-difference.

\begin{thm}[{\cite[Theorem 3.16]{berw}}]\label{thm:additivity}
	Let $M_0\overset V\leadsto M_1\overset W\leadsto M_2$ be $\Spin$-cobordisms, $h_i\in\calR(M_i)$ and $g\in\calF(V)_{h_0,h_1}$, $g'\in\calF(W)_{h_1,h_2}$. Then the following diagram commutes up to homotopy:
	\begin{center}
	\begin{tikzpicture}
		\node (0) at (0,1.2) {$\calF(V)_{h_0,h_1} \times \calF(W)_{h_1,h_2}$};
		\node (1) at (5,1.2) {$\calF(V\cup W)_{h_0,h_2}$};
		\node (2) at (0,0) {$\Omega^{\infty+d+1}\ko\times \Omega^{\infty+d+1}\ko$};
		\node (3) at (5,0) {$\Omega^{\infty+d+1}\ko$};

		\draw[->](0) to node[above]{$\_\cup\_$} (1);
		\draw[->](0) to node[left]{$\inddiff_g\times\inddiff_{g'}$} (2);
		\draw[->](1) to node[right]{$\inddiff_{g\cup g'}$} (3);
		\draw[->](2) to node[above]{$ + $} (3);
	\end{tikzpicture}
	\end{center}
\end{thm}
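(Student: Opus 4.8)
The plan is to deduce the asserted homotopy-commutativity directly from the additivity theorem for the index stated just above (\cite[Theorem 3.12]{berw}); the real content is only to set up the right family of manifold bundles over the parameter space and then translate the resulting identity of $\ko$-classes into a homotopy of maps into $\Omega^{\infty+d+1}\ko$. Throughout I would write $X\coloneqq\calF(V)_{h_0,h_1}\times\calF(W)_{h_1,h_2}$, work over $\bbI\times X$ relative to $Y\coloneqq\partial\bbI\times X$, and use that, since $\calF$ implies positive scalar curvature, $h_0\in\calR_\psc(M_0)$, $h_1\in\calR_\psc(M_1)$, $h_2\in\calR_\psc(M_2)$, and every metric $g_V\in\calF(V)_{h_0,h_1}$, $g_W\in\calF(W)_{h_1,h_2}$ lies in $\calR_\psc$ of the respective manifold.

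First I would introduce two fibre-wise Riemannian $\Spin$-manifold bundles over $\bbI\times X$. Let $E$ have fibre the elongation of $V\cup W$ with fibre-wise metric $\tfrac{1-t}{2}(g_V\cup g_W)+\tfrac{1+t}{2}(g\cup g')$ over $(t,g_V,g_W)$, and let $E'$ be the trivial bundle with fibre $\bbR\times M_1$ carrying the fixed metric $\dt^2+h_1$. Because $g_V$ and $g$ both restrict to $h_1+\dt^2$ in a collar of $M_1$ (and likewise $g_W$, $g'$), the metric on $E$ equals $\dt^2+h_1$ over a position-interval $(a_0,a_1)$ around $M_1$; hence $E$ has cylindrical ends, is cylindrical over $\bbI\times X\times(a_0,a_1)$, agrees with $E'$ there, and by the Schr\"odinger--Lichnerowicz formula has invertible Dirac operators over that region, while $\ind(E')=0$ as $E'$ is acyclic. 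Thus the hypotheses of the additivity theorem are satisfied; moreover over $Y=\partial\bbI\times X$ every metric occurring is a positive scalar curvature metric (a reference metric $g$ or $g'$ at $t=1$, a variable metric $g_V$ or $g_W$ at $t=-1$), so the relative form of the theorem applies as well.

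Next I would identify the four pieces from \cite[Theorem 3.12]{berw}. With $E_0=E_{(-\infty,a_1)}$, $E_1=E_{(a_0,\infty)}$, $E_2=E'_{(-\infty,a_1)}$, $E_3=E'_{(a_0,\infty)}$, reassembling gives $E_{01}=E$, which is exactly the bundle whose index represents $\inddiff_{g\cup g'}$ pulled back along $\_\cup\_$. Gluing the half-cylinder $E_3$ onto $E_0$ turns $E_{03}$ into the elongation of $V$ with metric $\tfrac{1-t}{2}g_V+\tfrac{1+t}{2}g$, i.e.\ the bundle representing $\inddiff_g$ pulled back along the projection $\pr$ to $\calF(V)_{h_0,h_1}$; symmetrically $E_{12}$ represents $\inddiff_{g'}\circ\pr$, and $E_{23}=E'$ is acyclic. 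The additivity theorem therefore gives
\begin{align*}
  \inddiff_{g\cup g'}\circ(\_\cup\_) &= \ind(E_{01})+\ind(E_{23})\\
   &= \ind(E_{03})+\ind(E_{12}) = \inddiff_g\circ\pr+\inddiff_{g'}\circ\pr
\end{align*}
as classes in $\ko^{-d}(\bbI\times X,Y)\cong\ko^{-d-1}(X)$.

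Finally I would translate this into the diagram. Under the homotopy-theoretic realization (\cite[Theorem 3.3 and below]{berw}), $\ko^{-d-1}(X)$ is identified with homotopy classes of maps $X\to\Omega^{\infty+d+1}\ko$, its group addition is induced by the infinite-loop addition $+\colon\Omega^{\infty+d+1}\ko\times\Omega^{\infty+d+1}\ko\to\Omega^{\infty+d+1}\ko$ appearing in the square, and $(g_V,g_W)\mapsto(\inddiff_g(g_V),\inddiff_{g'}(g_W))$ is precisely $\inddiff_g\times\inddiff_{g'}$. So the displayed identity says that $\inddiff_{g\cup g'}\circ(\_\cup\_)$ and $+\circ(\inddiff_g\times\inddiff_{g'})$ represent the same class, hence are homotopic, which is the assertion. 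I expect the only genuinely delicate point — the heavy analysis being already packaged into the additivity theorem for the index — to be making the capped-off truncations $E_{03}$ and $E_{12}$ literally coincide with the bundles defining $\inddiff_g$ and $\inddiff_{g'}$, cylindrical ends included; this is exactly where the collar normalisation of metrics and the positive-scalar-curvature hypothesis on $\calF$ (guaranteeing invertible Dirac operators at all the relevant places) enter.
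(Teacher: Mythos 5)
Your argument is correct and is, in substance, exactly how this statement is derived from the index additivity theorem (\cite[Theorem 3.12]{berw}): the paper itself merely cites \cite[Theorem 3.16]{berw} without reproducing its proof, and your setup of $E$, $E'$ over $\bbI\times X$ relative to $\partial\bbI\times X$, the identifications $E_{01}=E$, $E_{23}=E'$ (acyclic), $E_{03}$ and $E_{12}$ as the elongated $V$- and $W$-bundles representing $\inddiff_g$ and $\inddiff_{g'}$, and the passage from the $\ko$-class identity to homotopy commutativity via the homotopy-theoretic realization all match the cited argument.
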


\subsubsection{The index-difference in an abstract setting}

Let $I\coloneqq[0,1]$ denote the interval and let $f\colon (X,x_0)\to (Y,y_0)$ be a pointed map. The mapping cylinder $\cyl(f)$ of $f$ is defined to be the space $(X\times[0,1]\amalg Y)/(x,1)\sim f(x)$ and let $i\colon Y\to \cyl(F)$ be the inclusion. We write
\[\ko^{-p}(f)\coloneqq\ko^{-p}(\cyl(f), X\times\{0\}\cup \{x_0\}\times I).\]
The homotopy fiber of $f$ at $y$ is defined as
\[\hofib_y(f)\coloneqq\{(x,c)\;|\;x\in X,\; c\colon[0,1]\to Y,\; c(0)=f(x),\; c(1) = y\}\]
and we have the canonical map $\epsilon_{y_0}\colon f^{-1}(y_0)\to\hofib_{y_0}(f),\;x\mapsto (x,\cst_{y_0})$ which is a pointed map by considering $*\coloneqq(x_0,\cst_{y_0})$ as the base-point of $\hofib_{y_0}(f)$. There is a natural map
\begin{align*}
	\eta_{y_0}\colon &(\bbI,\partial\bbI) \times(\hofib_{y_0}(f),*)\to(\cyl(f), X\times\{0\}\cup \{x_0\}\times[0,1])\\
		&(t,x,c)\mapsto\begin{cases}
				(x,1+t) &\quad \text{ if } t\le 0\\
				i(c(t)) &\quad\text{ if } t\ge0.
			\end{cases}
\end{align*}
Note that $\eta_{y_0}\circ(\id_{\bbI}\times\epsilon_{y_0})$ is homotopic to $\iota_{y_0}\colon (t,x)\mapsto(x, \frac12(t+1))$ as a map of pairs. The \emph{fiber transport map} is defined as
\begin{align*}
	\tau\colon&\Omega_{y_0}Y\to \hofib_{y_0}(f)\\
		&c\mapsto (x_0,c).
\end{align*}
For a class $\alpha\in\ko^{-p}(f)$ there is an associated \emph{base class} $\bas(\alpha)\coloneqq i^*\alpha\in\ko^{-p}(Y)$ and a \emph{transgression} $\trg(\alpha)\coloneqq \eta_{y_0}^*\alpha\in\Omega \ko^{-p}(\hofib_{y_0}(f))$. The loop map is defined by $l\colon\bbI\times\Omega_{y_0} Y\to Y, \; (t,c)\mapsto c(\frac12(t+1))$ and we write $\Omega\coloneqq l^*\colon\ko^{-p}(Y,y_0)\to\Omega \ko^{-p}(\Omega_{y_0}Y, \cst_{y_0})$.
\begin{lem}[{\cite[Lemma 3.19]{berw}}]\label{lem:abstractfiber}
	We have \[\tau^*\trg(\alpha) = \Omega\bas(\alpha)\in\Omega\ko^{-p}(\Omega_{y_0}Y,\cst_{y_0}).\]
\end{lem}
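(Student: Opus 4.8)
The plan is to prove the identity $\tau^*\trg(\alpha) = \Omega\bas(\alpha)$ by tracing both sides back to pullbacks of the single class $\alpha \in \ko^{-p}(f)$ along maps out of $(\bbI,\partial\bbI)\times(\Omega_{y_0}Y,\cst_{y_0})$, and then showing those two maps into the pair $(\cyl(f), X\times\{0\}\cup\{x_0\}\times I)$ are homotopic as maps of pairs. Since $\ko^{-p}(-,-)$ is a cohomology theory and hence homotopy-invariant on pairs, homotopic maps induce equal maps on $\ko$-groups, so this suffices.

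First I would unwind the two sides. By definition $\trg(\alpha) = \eta_{y_0}^*\alpha$, so $\tau^*\trg(\alpha) = \tau^*\eta_{y_0}^*\alpha = (\eta_{y_0}\circ(\id_\bbI\times\tau))^*\alpha$, where I extend $\tau$ trivially on the $\bbI$-coordinate. On the other side, $\bas(\alpha) = i^*\alpha$ and $\Omega = l^*$, so $\Omega\bas(\alpha) = l^*i^*\alpha = (i\circ l)^*\alpha$, again regarding $i\circ l$ as a map of pairs $(\bbI,\partial\bbI)\times(\Omega_{y_0}Y,\cst_{y_0}) \to (\cyl(f), X\times\{0\}\cup\{x_0\}\times I)$; note $i\circ l$ sends $\partial\bbI\times\Omega_{y_0}Y$ and $\bbI\times\{\cst_{y_0}\}$ into $i(Y)$, which meets the relative subspace only in $\{x_0\}\times I$'s image — more precisely one checks $i(l(\partial\bbI\times\Omega_{y_0}Y\cup\bbI\times\{\cst\}))\subset i(Y)$ lands appropriately, but since $l$ maps the basepoint loop to $y_0$ and $i(y_0)$ is the image of $\{x_0\}\times\{1\}$, the relevant boundary conditions hold after a small homotopy. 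So it remains to produce a homotopy of pairs between $\eta_{y_0}\circ(\id_\bbI\times\tau)$ and $i\circ l$.

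Now I would compute $\eta_{y_0}\circ(\id_\bbI\times\tau)$ explicitly. Unravelling the definitions, $\tau(c) = (x_0,c)$, and then $\eta_{y_0}(t,x_0,c)$ equals $(x_0,1+t)$ for $t\le 0$ and $i(c(t))$ for $t\ge 0$. The portion with $t\le 0$ has image $\{x_0\}\times[0,1]\subset X\times[0,1]\subset\cyl(f)$, which is exactly contained in the relative subspace $X\times\{0\}\cup\{x_0\}\times I$. The portion with $t\ge 0$ is $i(c(t))$. Meanwhile $i\circ l$ is $(t,c)\mapsto i(c(\tfrac12(t+1)))$, which for $t$ ranging over $[-1,1]$ traces out the loop $c$ over the interval $[0,1]$ inside $i(Y)$. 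The two maps therefore differ only in how they parametrise and in that $\eta_{y_0}\circ(\id_\bbI\times\tau)$ spends the interval $[-1,0]$ sitting over the basepoint segment rather than running along $c$; a linear reparametrisation of the $\bbI$-coordinate together with the observation that the segment $\{x_0\}\times I$ is contractible rel endpoints to $i(y_0) = i(c(1))$ inside the relative subspace produces the desired homotopy of pairs. Concretely one writes down $H\colon \bbI\times\bbI\times\Omega_{y_0}Y\to\cyl(f)$ interpolating the two parametrisations, checks it carries $\bbI\times\partial\bbI\times\Omega\cup\bbI\times\bbI\times\{\cst\}$ into the relative subspace throughout, and invokes homotopy invariance.

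The main obstacle I expect is bookkeeping: verifying that the interpolating homotopy $H$ genuinely respects the relative subspace $X\times\{0\}\cup\{x_0\}\times I$ at every time, since $\eta_{y_0}$ is defined piecewise and the two pieces meet at $t=0$ over the point $i(c(0)) = i(f(x_0)) = i(y_0)$, which is where the mapping-cylinder identification $(x_0,1)\sim f(x_0)$ is being used. One must be careful that the homotopy does not stray out of the subspace when sliding the $t\le 0$ part (living in $X\times[0,1]$) across to the $t\ge 0$ part (living in $i(Y)$); the fact already recorded in the excerpt that $\eta_{y_0}\circ(\id_\bbI\times\epsilon_{y_0})$ is homotopic to $\iota_{y_0}$ as a map of pairs is the model for exactly this kind of argument and can be adapted essentially verbatim. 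Everything else is formal naturality of pullbacks in a cohomology theory.
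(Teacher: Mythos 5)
Your proposal is correct, and you have identified the right strategy. Note first that the paper itself does not supply a proof of this lemma; it is stated as a citation to \cite[Lemma 3.19]{berw}. Your argument is the natural one and is (I expect) essentially what appears there.

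To tie up the two small loose ends you flag: no ``small homotopy'' is actually needed for the map-of-pairs verification for $i\circ l$, since everything already holds on the nose. At $t\in\partial\bbI$ one has $l(t,c)=c(0)=c(1)=y_0$ and at $c=\cst_{y_0}$ one has $l(t,\cst_{y_0})=y_0$; in $\cyl(f)$ the identification $(x_0,1)\sim f(x_0)=y_0$ gives $i(y_0)\in\{x_0\}\times I$ exactly. Similarly, $\eta_{y_0}\circ(\id_\bbI\times\tau)$ is a genuine map of pairs: at $t=-1$ it lands in $X\times\{0\}$, at $t=1$ at $(x_0,1)$, and along $\cst_{y_0}$ in $\{x_0\}\times I$. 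For the homotopy of pairs itself, the cleanest construction proceeds in two stages, both of which manifestly preserve the relative subspace: first, linearly shrink the cylinder portion of $\eta_{y_0}\circ(\id_\bbI\times\tau)$ onto $(x_0,1)$ via $(s,t,c)\mapsto (x_0, 1+(1-s)t)$ for $t\le 0$ and $i(c(t))$ for $t\ge 0$, arriving at the map $(t,c)\mapsto i(c(\max(0,t)))$; second, linearly interpolate the time reparametrisation $\max(0,t)\rightsquigarrow \tfrac12(t+1)$ inside $i(Y)$, which stays in the relative subspace at $t\in\partial\bbI$ and $c=\cst_{y_0}$ because all those points map to $i(y_0)=(x_0,1)$. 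This fills in the bookkeeping you correctly anticipate; the rest is naturality of $\ko^{-p}(-,-)$ under homotopic maps of pairs.
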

\noindent This lemma can be illustrated by the following homotopy-commutative diagram
\begin{center}
\begin{tikzpicture}
	\node (0) at (0,1.2) {$\Omega_{y_0} Y$};
	\node (1) at (5,1.2) {$\hofib_{y_0}(f)$};
	\node (2) at (2.5,0) {$\Omega^{\infty+p+1}\ko$};

	\draw[->] (0) to node[above]{$\tau$} (1);
	\draw[->] (0) to node[left,yshift=-5pt]{$\Omega\bas(\alpha)$} (2);
	\draw[->] (1) to node[right,yshift=-5pt]{$\trg(\alpha)$} (2);
\end{tikzpicture}
\end{center}
An instructive way to think about these class proposed in \cite{berw} is the following: $j^*\alpha$ for $j\colon X\times[0,1]\embeds \cyl(f)$ is a concordance in the sense of \pref{Definition}{def:concordance} and $j^*\alpha|_{X\times\{0\}}$ is acyclic. Hence the class $f^*\bas(\alpha)=j^*\alpha|_{X\times\{1\}}=0\in \ko^{-p}(X,x_0)$ and hence we get the following homotopy-commutative diagram where the columns are homotopy fiber sequences:
\begin{center}
\begin{tikzpicture}
	\node (0) at (0,2.4) {$\hofib_{y_0}(f)$};
	\node (1) at (3,2.4) {$\Omega^{\infty+p+1}\ko$};
	\node (2) at (0,1.2) {$X$};
	\node (3) at (3,1.2) {$*$};
	\node (4) at (0,0) {$Y$};
	\node (5) at (3,0) {$\Omega^{\infty+p}\ko.$};

	\draw[->] (0) to node[above]{$\trg(\alpha)$} (1);
	\draw[->] (2) to node[above]{} (3);
	\draw[->] (4) to node[above]{$\bas(\alpha)$} (5);

	\draw[->] (0) to node[above]{} (2);
	\draw[->] (2) to node[left]{$f$} (4);
	\draw[->] (1) to node[above]{} (3);
	\draw[->] (3) to node[above]{} (5);
\end{tikzpicture}
\end{center}

\subsubsection{Increasing the dimension}
As a consequence of the abstract setting described in the previous section we can now derive the following propagation result allowing us to increase the dimension. For this we further assume that $\calF$ is fibrant.
\begin{thm}[{\cite[Theorem 3.22]{berw}}]\label{thm:increase}
	Let $W$ be a $\Spin$-manifold of dimension $d$, $h\in\calF(\partial W)$ and $g\in\calF(W)_h$. Then the following diagram is weakly homotopy commutative
	\begin{center}
	\begin{tikzpicture}
		\node(0) at (0,1.2) {$\Omega_h\calF(\partial W)$};
		\node(1) at (5,1.2) {$\calF(W)_h$};
		\node(2) at (2.5,0) {$\Omega^{\infty + d + 1}\ko$};

		\draw[->] (0) to node[above]{$T$} (1);
		\draw[->] (0) to node[left, yshift=-5pt]{$-\Omega\inddiff_h$} (2);
		\draw[->] (1) to node[right, yshift=-5pt]{$\inddiff_g$} (2);
	\end{tikzpicture}
	\end{center}
	where $T$ denotes the fiber transport map after identifying $\calF(W)_h$ with $\hofib_h(\res)$ via $\epsilon_{h_0}$.
\end{thm}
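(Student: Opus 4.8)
The plan is to exhibit the triangle as an instance of the abstract fibre-transport identity \pref{Lemma}{lem:abstractfiber}. Since $\calF$ is fibrant, $\res\colon\calF(W)\to\calF(\partial W)$ is a Serre fibration, so $\epsilon_h\colon\calF(W)_h=\res^{-1}(h)\to\hofib_h(\res)$ is a weak homotopy equivalence and, by the usual description of fibre transport for a Serre fibration, $T$ corresponds under $\epsilon_h$ to the abstract fibre transport $\tau\colon\Omega_h\calF(\partial W)\to\hofib_h(\res)$. Fixing $g\in\calF(W)_h\subset\calF(W)$ as base-point, it then suffices to produce a class
\[\alpha\in\ko^{-d}(\res)=\ko^{-d}\bigl(\cyl(\res),\,\calF(W)\times\{0\}\cup\{g\}\times I\bigr)\]
whose base class $\bas(\alpha)\in\ko^{-d}(\calF(\partial W),h)$ realises $\inddiff_h$ and whose transgression $\trg(\alpha)\in\Omega\ko^{-d}(\hofib_h(\res))$ realises, via $\epsilon_h$, the class $-\inddiff_g$: indeed \pref{Lemma}{lem:abstractfiber} then gives $\tau^*\trg(\alpha)=\Omega\bas(\alpha)$, which after passing to homotopy-theoretic realisations says exactly that $\inddiff_g\circ T$ and $-\Omega\inddiff_h$ are weakly homotopic.

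To build $\alpha$ I would construct a $\Spin$-manifold bundle $\pi\colon\calE\to\cyl(\res)$ with cylindrical ends together with a fibre-wise metric $G$ having positive scalar curvature at infinity, with fibre the elongation $\hat W=W\cup_{\partial W}(\partial W\times[0,\infty))$ throughout. Over $(g_W,t)\in\calF(W)\times[0,1]$ let the metric be $(1-t)g_W+tg$ on $W$ and, on the collar, a metric $\mathrm ds^2+\gamma_{(g_W,t)}(s)$ whose value at $s=0$ is the boundary value of the $W$-part, which equals $\res(g_W)$ for all $s$ when $t=0$, and which for $t=1$ equals the Gromov--Lawson cylinder $\mathrm ds^2+(1-\psi(s))h+\psi(s)\res(g_W)$; the $\gamma_{(g_W,t)}$ interpolate between these, are arranged to equal $\res(g_W)$ for $s$ large (so $G$ has positive scalar curvature at infinity), and at $t=1$ the whole fibre metric depends only on $\res(g_W)$, so that $\calE$ descends across $(g_W,1)\sim\res(g_W)$ to a bundle over $\cyl(\res)$. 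Over $\calF(W)\times\{0\}$ the fibre is the honest elongation of $(W,g_W)$, hence is everywhere psc since $\calF$ implies positive scalar curvature, and so has invertible Dirac operator; over $\{g\}\times I$ all interpolations are constant and the fibre is the everywhere-psc elongation of $(W,g)$. Thus $(\calE,G)$ is acyclic on $\calF(W)\times\{0\}\cup\{g\}\times I$ and $\alpha\coloneqq\ind(\calE,G)$ lands in the required relative group, independent of the choices by the invariance properties of $\ind$.

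It remains to compute the two pieces. Restricting $\alpha$ along $i\colon\calF(\partial W)\hookrightarrow\cyl(\res)$ gives over $h_1$ the elongation $\hat W$ with metric $g$ on $W$ and the Gromov--Lawson collar from $h$ to $h_1$; cutting at a level $\partial W\times\{s_0\}$ and applying the additivity theorem \pref{Theorem}{thm:additivity} splits its index into the index of the everywhere-psc elongation of $(W,g)$, which vanishes, plus the index of the Gromov--Lawson cylinder, which is $\inddiff^{GL}_h(h_1)$. Hence $\bas(\alpha)$ realises $\inddiff^{GL}_h$, and by the spectral flow index theorem \pref{Theorem}{thm:spectral-flow} it realises $\inddiff_h$. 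For the transgression, the homotopy $\eta_h\circ(\id_\bbI\times\epsilon_h)\simeq\iota_h$ identifies the $\epsilon_h$-restriction of $\trg(\alpha)$ with $\iota_h^*\alpha$, i.e. the pullback of $(\calE,G)$ to $(\bbI,\partial\bbI)\times\calF(W)_h$ along $(t,g_W)\mapsto(g_W,\tfrac{t+1}{2})$; since $\res(g_W)=h$ there the collar metric reduces to $\mathrm ds^2+h$ and the fibre becomes the elongation of $(W,\tfrac{1-t}{2}g_W+\tfrac{1+t}{2}g)$ — Hitchin's model with the two metric slots interchanged relative to $\inddiff_g$, hence realising $-\inddiff_g$ (equivalently, the orientation of the mapping-cylinder coordinate is opposite to that of Hitchin's interpolation parameter). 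Feeding $\bas(\alpha)$ and $\trg(\alpha)$ into \pref{Lemma}{lem:abstractfiber} then yields precisely that $\inddiff_g\circ T$ and $-\Omega\inddiff_h$ are weakly homotopic.

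The main obstacle is the construction in the second paragraph: one must choose the collar families $\gamma_{(g_W,t)}$ precisely enough that $\calE$ is a genuine fibre bundle descending to $\cyl(\res)$, is cylindrical with positive scalar curvature at infinity, and is acyclic on exactly the prescribed relative part — delicate bookkeeping with elongations and translations in which the additivity theorem and the invertibility of Dirac operators on psc manifolds do the real work, and where the sign conventions have to be tracked carefully to land the $-$ in the statement.
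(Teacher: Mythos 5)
Your proof uses the same overall strategy as the paper: invoke \pref{Lemma}{lem:abstractfiber} after constructing a class $\alpha\in\ko^{-d}(\res)$ on the mapping cylinder of $\res$ that is acyclic over $\calF(W)\times\{0\}\cup\{g\}\times I$, then identify its base class with $\inddiff_h$ and its transgression (pulled back along $\epsilon_h$) with $-\inddiff_g$. The difference is in the concrete bundle and metric. The paper works with the trivial \emph{compact} $W$-bundle over $\cyl(\res)$ and defines the fibre metric via a partition-of-unity standardisation $\sigma_{g}(h')=a\,(h'+\dt^2)+(1-a)g$: over $h'\in\calF(\partial W)$ it takes $\sigma_g(h')$, and over $(g_W,t)$ it takes $t\,\sigma_g(\res(g_W))+(1-t)\,g_W$, deferring the two key computations (base $\sim\inddiff_h$, transgression $\sim-\inddiff_g$) to the cited {\cite[Proposition 3.23]{berw}}. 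You instead build the elongated $\hat W$-bundle with a Gromov--Lawson collar over $\calF(\partial W)$ and verify the two computations in place: the Gromov--Lawson collar makes the base class manifestly the Gromov--Lawson model of $\inddiff_h$, so that the spectral flow index theorem \pref{Theorem}{thm:spectral-flow} enters transparently (the paper's proof also needs it, via the footnote in {\cite[Proposition 3.23]{berw}}, but this is invisible in the text), and the additivity theorem \pref{Theorem}{thm:additivity} does the cutting. Your pullback along $\iota_h\colon(t,g_W)\mapsto(g_W,\tfrac{t+1}{2})$ and the resulting Hitchin interpolation $\tfrac{1-t}{2}g_W+\tfrac{1+t}{2}g$ correctly lands the antisymmetry sign $-\inddiff_g$, which the theorem requires (the last display in the paper's proof in fact drops this sign, so you are tracking it more carefully than the source). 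The honest gap you flag yourself is real but manageable: one must construct the collar families $\gamma_{(g_W,t)}$ so that (i) they agree with $\res(g_W)$ for large $s$ uniformly, (ii) they reduce to the constant cylinder $\mathrm ds^2+h$ along $\{g\}\times I$, and (iii) the whole thing descends across $(g_W,1)\sim\res(g_W)$; a linear-in-$t$ interpolation $\gamma_{(g_W,t)}(s)=(1-t)\res(g_W)+t\bigl((1-\psi(s))h+\psi(s)\res(g_W)\bigr)$ does all three. Overall: correct, same skeleton as the paper, with a variant and somewhat more self-contained construction of the auxiliary class.
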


\begin{proof}
Let $g_0\in\calF(W)_{h_0}$. We define $\sigma_{g_0}\colon\calF(\partial W)\to \calR(W)$ by
\[\sigma_{g_0}(h)\coloneqq a\cdot(h+dt^2) + (1-a)g_0\]
for some cutoff-function supported on the collar with $a|_{\partial W} \equiv1$. Note that $\sigma_{g_0}(h)\in\calR(W)_h$ and $\sigma_{g_0}(h_0) = g_0$. We define a fiber-wise Riemannian metric $m$ on the trivial $W$-bundle $W\times\cyl(\res)\to \cyl(\res)$ as follows: over $h\in\calF(\partial W)\subset\cyl(\res)$ let $m\coloneqq\sigma_{g_0}(h)$ and over $(g,t)\in\calF(W)\times[0,1]$ let $m\coloneqq t\cdot\sigma(\res(g)) + (1-t)\cdot g$. We note the following properties of $m$:
\begin{enumerate}
	\item $m|_{\cyl(\res)\times\partial W}$ has invertible Dirac operator.
	\item $m_{(g,0)} = g$ for all $g\in \calF(W)$.
	\item $m_{(g_0,t)}=g_0$ for all $t\in[0,1]$
\end{enumerate}
Hence, we can define $\beta\coloneqq (W\times\cyl(\res),m)\in\ko^{-d}(\cyl(\res),\calF(W)\times\{0\}\cup\{g_0\}\times[0,1])$. Since the choice of the cutoff function $a$ is convex, $\beta$ only depends on $g_0$. By \cite[Proposition 3.23]{berw}, we have that \footnote{The first statement is proven straightforward whereas the second statement is more involved and uses the \pref{spectral flow index theorem}{thm:spectral-flow} (cf.\ \cite[Theorem 3.10]{berw} and \cite{ebert_inddiff}).}
\begin{enumerate}
	\item $\epsilon_{h_0}^*(\trg(\beta))\colon\calF(W)_{h_0}\to\Omega^{\infty + d + 1}\ko$ is homotopic to $-\inddiff_{g_0}$.
	\item $\bas(\beta) \colon \calF(\partial W)\to\Omega^{\infty +d}\ko$ is weakly homotopic to $\inddiff_{h_0}$.
\end{enumerate}
 By \pref{Lemma}{lem:abstractfiber} we conclude that
\[\Omega\inddiff_{h_0}\sim \Omega\bas(\beta)\sim\tau^*\trg(\beta) \sim \underbrace{(\epsilon_{h_0}^{-1}\circ \tau)}_{=T}{}^*\inddiff_{g_0},\]
which finishes the proof of \pref{Theorem}{thm:increase}.
\end{proof}

\subsubsection{Relating $\inddiff$ to $\ind$ }\label{sec:inddiff-to-ind}

Let $W$ be a $d$-dimensional $\Spin$-manifold with boundary $M$, such that $(W,M)$ is $1$-connected. Let $\pi\colon E\to X$ be a smooth fiber bundle with fiber $W$ over a paracompact base $X$ and associated structure group $\diff_\partial(W)$. This as a $\Spin$-structure on the vertical tangent bundle which is constant along the boundary sub-bundle. Let $h_0\in\calR(M)$ be a fixed boundary condition such that $\calF(W)_{h_0}\not=\emptyset$. We get an associated fiber bundle
\[p\colon Q\times_{\diff_\partial(W)} \calF(W)_{h_0}\too X\]
Let $x_0\in X$ be a base point and let us identify $\pi^{-1}(x_0) = W$. Then $p^{-1}(x_0)$ can be identified with $\calF(W)_{h_0}$ and we choose a base point $g_0\in p^{-1}(x_0)$.

We will now construct an element $\beta\in\ko^{-d}(p)$, depending only on the bundle $\pi$ and the metric $g_0$. Let $k$ be a fiber-wise Riemannian metric on $\pi$ such that
\begin{enumerate}
	\item the restriction of $k$ to $\pi^{-1}(x_0) = W$ is equal to $g_0$,
	\item near  the boundary sub-bundle $\partial E$, the restriction of $k$ is a cylinder on $h_0$.
\end{enumerate}
Such a metric can be constructed using a partition of unity and $k$ is not assumed to be in $\calF(\pi^{-1}(x))_{h_0}$ for all $x$. Let $\tilde E\coloneqq \pr^*E$ for the natural map $\pr\colon \cyl(p)\to X$. $\tilde E$ then inherits a fiber-wise metric $\tilde k$ as follows: over $x\in X\subset\cyl(p)$ we take $\tilde k_x \coloneqq k_x$ and over a point $(x,g,t)\in Q\times_{\diff_\partial(W)} \calF(W)_{h_0}\times[0,1]$ we let $\tilde k_x\coloneqq(1-t)g + tk_x$. Then $\tilde k$ also satisfies boundary condition $h_0$ and it has positive scalar curvature for $t=0$ and $(x,g) = (x_0,g_0)$. Since $\tilde E$ also has a $\Spin$-structure on the vertical tangent bundle there is a family Dirac operator and hence a well-defined class $\beta\in\ko^{-d}(p)$. This has the following properties.
\begin{prop}[{\cite[Proposition 3.33]{berw}}]\label{prop:inddiff-to-ind}\leavevmode
\begin{enumerate}
	\item $\bas(\beta) = \ind(E,k)\in \ko^{-d}(X)$.
	\item $\trg(\beta) = \inddiff_{g_0}\in\Omega\ko^{-d}(\calF(W)_{h_0})$.
	\item $\beta$ is natural with respect to fiber bundles.
	\item Let $V\colon M\to M'$ is a $\Spin$-cobordism and $m\in\calF(V)_{h_0,h_1}$. Let
		\[\pi'\colon E\cup_\partial (X\times V) \to X\]
		be the bundle obtained by gluing in $V$ in each fiber. Then there is a commutative diagram
	\begin{center}
	\begin{tikzpicture}
		\node(0) at (0,1.2) {$Q\times_{\diff_\partial(W)} \calF(W)_{h_0}$};
		\node(1) at (5,1.2) {$Q\times_{\diff_\partial(W)} \calF(W\cup V)_{h_1}$};
		\node(2) at (2.5,0) {$X$};

		\draw[->] (0) to node[above]{$\mu_m$} (1);
		\draw[->] (0) to node[left, yshift=-5pt]{$p$} (2);
		\draw[->] (1) to node[right, yshift=-5pt]{$p'$} (2);
	\end{tikzpicture}
	\end{center}
	and the image of $\beta'\in \ko^{-d}(p')\to\ko^{-d}(p)$ agrees with $\beta$.
\end{enumerate}
\end{prop}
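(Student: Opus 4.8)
The plan is to read off all four assertions from the construction of $\beta$ as the family index class $\ind(\tilde E,\tilde k)\in\ko^{-d}\bigl(\cyl(p),\calE\times\{0\}\cup\{g_0\}\times I\bigr)$ of the vertical Dirac operator of $\tilde E=\pr^*E$ over $\cyl(p)$, where $\calE\coloneqq Q\times_{\diff_\partial(W)}\calF(W)_{h_0}$ denotes the total space of $p$ and $\pr\colon\cyl(p)\to X$ is the canonical map. I would first record that $\beta$ does not depend on the auxiliary reference metric $k$: two admissible choices are joined by the affine path of reference metrics, which is again cylindrical on $h_0$ near the boundary and equal to $g_0$ over $x_0$, and over the reference subspace the associated fibre-wise metrics are always one of the fixed positive scalar curvature metrics $g$ or $g_0$, hence have invertible Dirac operator; so the path is a concordance of relative cycles. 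Part~(1) then follows by restricting along the inclusion $i\colon X\hookrightarrow\cyl(p)$: since $\pr\circ i=\id_X$ one has $i^*\tilde E=E$ and $i^*\tilde k=k$, so $\bas(\beta)=i^*\beta=\ind(E,k)$ by naturality of the family index. Part~(3) is the same kind of argument: a bundle map covering $X'\to X$ that matches the chosen base points and base metrics induces compatible maps of the associated $\calF$-bundles and of their mapping cylinders, and if the reference metric upstairs is chosen to be the pullback of $k$, then $\tilde E'$, $\tilde k'$ and hence $\beta'$ are the pullbacks of $\tilde E$, $\tilde k$, $\beta$ (and by the previous remark the precise choice of reference metric upstairs is immaterial).

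For part~(2) I would compute $\epsilon_{x_0}^*\trg(\beta)=\bigl(\eta_{x_0}\circ(\id_{\bbI}\times\epsilon_{x_0})\bigr)^*\beta$ and then use the homotopy, recorded just before \pref{Lemma}{lem:abstractfiber}, identifying $\eta_{x_0}\circ(\id_{\bbI}\times\epsilon_{x_0})$ with the map of pairs $\iota_{x_0}\colon(t,g)\mapsto\bigl(g,\tfrac12(t+1)\bigr)$ into $\bigl(\cyl(p),\calE\times\{0\}\cup\{g_0\}\times I\bigr)$. Since $\pr$ is constant on the fibre $p^{-1}(x_0)\times[0,1]$, pulling $\tilde E=\pr^*E$ back along $\iota_{x_0}$ produces the trivial $W$-bundle over $\bbI\times\calF(W)_{h_0}$, carrying the fibre-wise metric $\tfrac{1-t}{2}g+\tfrac{1+t}{2}g_0$ over $(t,g)$; this is exactly the fibre-wise metric defining Hitchin's index difference, restricted to $\calF(W)_{h_0}\times\{g_0\}$, so $\epsilon_{x_0}^*\trg(\beta)=\inddiff_{g_0}$. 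I expect this to be the delicate step, since one has to unwind simultaneously the mapping-cylinder construction, the map $\eta_{x_0}$ and the Dirac bundle on $\tilde E$, while keeping careful track of the base point and reference subspace of $\ko^{-d}(p)$ and of the conventions (which slot of $\inddiff$, which orientation of $\bbI$) that make the signs work out.

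For part~(4) I would first note that $\mu_m$ covers $\id_X$ and hence induces $\hat\mu_m\colon\cyl(p)\to\cyl(p')$ carrying the reference subspace to the reference subspace, so the displayed map $\ko^{-d}(p')\to\ko^{-d}(p)$ is $\hat\mu_m^*$. Choosing the reference metric on $\pi'$ to be $k$ on the $E$-part and the fixed metric $m$ on $X\times V$, a direct check gives that $\hat\mu_m^*(\tilde E',\tilde k')$ is the bundle obtained from $(\tilde E,\tilde k)$ by gluing the fixed cobordism $(V,m)$ into every fibre. It then remains to see that this gluing leaves the index class unchanged, which is where the additivity theorem enters (\cite[Theorem 3.12]{berw}, restated in \pref{Theorem}{thm:additivity}): after elongation the glued bundle differs from $\tilde E$ only in that a cylindrical end on $h_0$ is replaced by $(V,m)$ followed by a cylindrical end on $h_1$; cutting along the inserted product collar and closing up the extra region presents the index difference as the index class of bundles with everywhere positive scalar curvature fibres — namely $V$ with two cylindrical ends, and a psc double cylinder — which vanish by the Schrödinger--Lichnerowicz formula. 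Hence $\hat\mu_m^*\beta'=\ind\bigl(\tilde E\cup_\partial(\cyl(p)\times V)\bigr)=\ind(\tilde E,\tilde k)=\beta$.

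In summary, (1) and (3) are essentially bookkeeping with the definitions of $\bas$, $\trg$ and naturality; (4) is short once the additivity theorem is available, though it requires the cut-and-paste decomposition to be pinned down precisely; and the real work is in (2), matching the abstract transgression class with the concrete linear-interpolation model of $\inddiff_{g_0}$.
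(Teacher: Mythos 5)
Your reconstruction is correct and matches the cited argument: the paper invokes \cite[Proposition 3.33]{berw} directly without reproving it, and your treatment of (1) and (3) as bookkeeping, of (2) by pulling $\tilde k$ back along $\iota_{x_0}$ to recover Hitchin's interpolation $\tfrac{1-t}{2}g+\tfrac{1+t}{2}g_0$, and of (4) via the additivity theorem together with vanishing of the index over $\cyl(p)\times\hat V$ (the fibre metric $m$ being psc everywhere) is essentially what is done there. The only convention worth pinning down against \cite{berw} is which slot of $\inddiff$ is fixed to define $\inddiff_{g_0}$: with $G_{(t,g_0,g_1)}=\tfrac{1-t}{2}g_0+\tfrac{1+t}{2}g_1$ and $\inddiff_{g_0}=\inddiff(\,\_\,,g_0)$ your $\iota_{x_0}$-pullback lands on the nose, while the opposite choice differs by an overall sign.
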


\section{Proof of main results}\label{sec:main}

\noindent For this entire section let $\calF$ be a parametrized codimension $c\ge3$ surgery stable, cellular Riemannian functor that implies positive scalar curvature.

\subsection{The action of the diffeomorphism group}\label{sec:diffeo-action}

\noindent Now let $M^{d}$ be a manifold with boundary $\partial M$ such that $\calF(M)\not=\emptyset$ and let $h\in\calR(\partial M)$ such that $h+dt^2\in\calF(\partial M\times[0,1])$. The space $\calF(M)_h$ admits an action of $\diff_\partial(M)$, the group of diffeomorphisms which are the identity on a neighborhood of $\partial M$, via pullback. We get an action map $\eta\colon\diff_\partial(M)\to \haut(\calF(M)_h)$ which induces
\begin{equation}
	\Gamma_\partial(M)\coloneqq \pi_0(\diff_\partial(M)) \too \pi_0(\haut(\calF(M)_h))\label{equ:action-map-boundary}.
\end{equation}
$\Gamma_\partial(M)$ is called the \emph{mapping class group} of $M$.

\begin{thm}\label{thm:action-map-boundary}
	Let $d\ge 2c-1$ and let $M^{d}$ be a $(c-2)$-connected, $B\ort(d)\langle c-1\rangle$-manifold with boundary $\partial M = S^{d-1}$. Also, assume that $M$ is $B\ort(d)\langle c-1\rangle$-cobordant to $D^{d-1}$ relative to the boundary. Then the image of the map (\ref{equ:action-map-boundary}) for $h=g_\circ^{d-2}$ is an abelian group.
\end{thm}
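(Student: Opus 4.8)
The plan is to prove the slightly stronger assertion that the action map \eqref{equ:action-map-boundary} factors through an abelian group; the abelianness will be produced by an Eckmann--Hilton argument whose ingredients are furnished by the surgery-stability of $\calF$ and \pref{Lemma}{lem:stable-metrics}.

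First I would set up a stabilization. Fix integers $a,b\ge c$ with $a+b=d$ (possible because $d\ge 2c-1$) and put $K\coloneqq([0,1]\times S^{d-1})\#(S^a\times S^b)$, a $B\ort(d)\scpr{c-1}$-cobordism $S^{d-1}\leadsto S^{d-1}$ which is $(c-2)$-connected and, since $S^a\times S^b$ bounds, $B\ort(d)\scpr{c-1}$-cobordant to the cylinder relative to the boundary. By \pref{Lemma}{lem:stable-metrics} there is then an $\calF$-stable metric $g_{\st}\in\calF(K)_{g_\circ,g_\circ}$; here and below $h$ denotes the round metric on $S^{d-1}=\partial M$. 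Glueing copies of $K$ onto $\partial M$ yields manifolds $M_j\coloneqq M\cup_{\partial M}K\cup\cdots\cup K$ with $\partial M_j=S^{d-1}$, and by \pref{Lemma}{lem:stable-metrics} the maps $\calF(M_j)_h\to\calF(M_{j+1})_h$ glueing in $g_{\st}$ are homotopy equivalences; moreover a diffeomorphism in $\diff_\partial(M)$, being the identity near $\partial M$, extends by the identity over the new copies of $K$. Writing $\calX$ for the homotopy colimit we thus obtain a homotopy equivalence $\calF(M)_h\simeq\calX$, a $\diff_\partial(M)$-action on $\calX$ through the one on $\calF(M)_h$, and a second action, commuting with the first, of the topological monoid $\calK\coloneqq\coprod_{j\ge0}\calF(K^{\cup j})_{g_\circ,g_\circ}$ (with $K^{\cup 0}=S^{d-1}\times[0,1]$, whose cylindrical metric lies in $\calF$ by the remark following \pref{Corollary}{cor:surgery-equivalence}) by glueing onto the free end $S^{d-1}$, an operation supported away from any diffeomorphism coming from $M$. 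Crucially, $\pi_0(\calK)$ is a \emph{commutative} monoid, since permuting the interior $S^a\times S^b$-summands is a boundary-fixing diffeomorphism exhibiting the two orders of glueing as homotopic; and because $g_{\st}$ is $\calF$-stable, $\calX$ stabilizes to a free rank-one module over the group completion $\bar G$ of $\pi_0(\calK)$, which is an abelian group.

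It then remains to combine these two actions. Each $\phi^{*}\colon\calX\to\calX$ is a self-equivalence of $\calX$ as a $\calK$-module, and composition of such interchanges with the $\calK$-module structure; an Eckmann--Hilton argument on $\pi_0$ shows that the image of $\diff_\partial(M)$ in $\pi_0\haut(\calX)\cong\pi_0\haut(\calF(M)_h)$ is contained in the group of $\bar G$-module automorphisms of the stabilized $\pi_0\calX$, which is abelian because that module is free of rank one; as each $\phi^{*}$ is invertible this gives the claim. I expect the main obstacle to be exactly this last step: one must organize the two actions coherently, as actions of honest topological monoids rather than merely ``up to homotopy'', so that the notion of $\calK$-module self-equivalence is meaningful, and one must verify that after stabilization $\pi_0\calF(M)_h$ really is a free rank-one $\bar G$-module -- and it is here that the hypotheses that $M$ be $(c-2)$-connected and $B\ort(d)\scpr{c-1}$-cobordant to $D^d$ enter, forcing $M$ into a sufficiently standard normal form. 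Parametrized surgery-stability, via \pref{Lemma}{lem:stable-metrics}, is what makes the stabilized object $\calX$ and the commuting glueing action on it exist at all; a routine preliminary is to check that the cobordism $K$ above genuinely satisfies the connectivity and relative-cobordism hypotheses of that lemma.
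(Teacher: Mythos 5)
Your approach is genuinely different from the paper's, and I see gaps in it that I do not know how to close along the lines you indicate.

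The paper's proof cuts a disc out of the interior of a collar and glues on a punctured cylinder $T$ to exhibit $\calF(M)_{g_\circ}$, $\diff_\partial(M)$-equivariantly, as $\calF(V)_{g_\circ,g_\circ}$ for a cobordism $V\colon S^{d-1}\leadsto S^{d-1}$. It then invokes \pref{Lemma}{lem:eckmann-hilton} with $\calC(0,1)=\calF(V)_{g_\circ,g_\circ}$, cylinder metrics $u_m=g_\circ+\dt^2$ on \emph{both} sides, and $x_0$ an $\calF$-stable metric on $V$ supplied by \pref{Lemma}{lem:stable-metrics}. The commutativity in that lemma is extracted purely from having stable gluings on both sides of the one morphism space carrying the $G$-action; at no point does it require any monoid of gluing pieces to be commutative.

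Your proposal instead hinges on commutativity of $\pi_0(\calK)$ together with a torsor structure on the stabilized $\pi_0$, and neither claim is established. For $m_1,m_2\in\calK$ the metrics $m_1\cup m_2$ and $m_2\cup m_1$ on $K^{\cup2}$ differ by pullback along the boundary-fixing diffeomorphism $\phi$ that swaps the two interior $S^a\times S^b$-summands; but $\phi$ acts nontrivially on $H_a(K^{\cup2};\bbZ)$, exchanging the classes coming from the $S^a$-factors of the two summands (here $3\le a\le d-3$, so these classes are visible), hence $\phi$ is \emph{not} isotopic to the identity rel $\partial$, and there is no reason its pullback should fix components of $\calF(K^{\cup2})_{g_\circ,g_\circ}$. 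The same issue already obstructs the well-definedness of the $\calK$-action on the homotopy colimit $\calX$: the colimit is formed by gluing $g_{\st}$ onto the free end, and an arbitrary $m\in\calK$ is glued onto that same end, so compatibility of the $\calK$-action with the stabilization maps would require exactly the unproved homotopy $m\cup g_{\st}\simeq g_{\st}\cup m$. The further claim that the stabilized $\pi_0\calF(M)_h$ is a free rank-one $\bar G$-module is also asserted without argument; the $(c-2)$-connectivity and $B\ort(d)\scpr{c-1}$-cobordism hypotheses on $M$ enter the paper only through \pref{Lemma}{lem:stable-metrics} to produce a stable metric on $V$, not to produce a torsor structure on $\pi_0$. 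Finally, a small point: your requirement $a,b\ge c$ with $a+b=d$ is impossible in the borderline case $d=2c-1$ admitted by the theorem.
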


\noindent For the proof we will use the following Lemma of Eckmann-Hilton style.

\begin{lem}[{\cite[Lemma 4.2]{berw}}]\label{lem:eckmann-hilton}
	Let $\calC$ be a nonunital topological category with objects the integers and let $G$ be a topological group which acts on $\calC$, i.e. $G$ acts on all morphism spaces and the composition in $\calC$ is $G$-equivariant. We will denote the composition of $x$ and $y$ by $x\cdot y$. Suppose that
	\begin{enumerate}
		\item $\calC(m,n)=\emptyset$ for $n\le m$.
		\item For each $m\ne0$ there exists a $u_m\in \calC(m,m+1)$ such that the composition maps
			\begin{align*}
				u_m\cdot\_&\colon\calC(m+1,n)\to \calC(m,n) &\text{for } n>m+1\\
				\_\cdot u_m&\colon\calC(n,m)\to \calC(n, m+1) &\text{for } n<m
			\end{align*}
			are homotopy equivalences.
		\item There exists an $x_0\in \calC(0,1)$ such that the composition maps
			\begin{align*}
				x_0\cdot\_&\colon\calC(1,n)\to \calC(0,n) &\text{for } n>1\\
				\_\cdot x_0&\colon\calC(n,0)\to \calC(n, 1) &\text{for } n<0
			\end{align*}
			are homotopy equivalences.
		\item The $G$-action is trivial unless $m\le0$ and $1\le n$.
	\end{enumerate}
	Then for $f,g\in G$ the maps $f,g\colon \calC(0,1)\to\calC(0,1)$ commute up to homotopy.
\end{lem}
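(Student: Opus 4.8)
The plan is to run a homotopy-theoretic version of the Eckmann--Hilton argument. Write $X\coloneqq\calC(0,1)$ and, for $f\in G$, let $f_*\colon X\to X$ denote the action. First I would record: by hypothesis (4), $G$ acts trivially on $\calC(m,m+1)$ whenever $m\ne0$ (then $m\ge1$ or $m+1\le0$), so each $u_m$ with $m\ne0$ is a $G$-fixed point; likewise $G$ acts trivially on $\calC(-1,0)$ and on $\calC(1,2)$. From (2) and (3) the composition maps $(\_)\cdot u_1\colon X\to\calC(0,2)$, $x_0\cdot(\_)\colon\calC(1,2)\to\calC(0,2)$, $u_{-1}\cdot(\_)\colon X\to\calC(-1,1)$, $(\_)\cdot x_0\colon\calC(-1,0)\to\calC(-1,1)$ and $u_{-1}\cdot(\_)\colon\calC(0,2)\to\calC(-1,2)$ are all homotopy equivalences. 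Composing suitably, fix homotopy equivalences $\sigma\colon X\to\calC(1,2)$ and $\bar\sigma\colon X\to\calC(-1,0)$ with $x_0\cdot\sigma(y)\simeq y\cdot u_1$ and $\bar\sigma(x)\cdot x_0\simeq u_{-1}\cdot x$ (in particular $\sigma(x_0)\simeq u_1$ and $\bar\sigma(x_0)\simeq u_{-1}$), homotopy inverses $r$ of $(\_)\cdot u_1\colon X\to\calC(0,2)$ and $l$ of $u_{-1}\cdot(\_)\colon X\to\calC(-1,1)$, and the $G$-equivariant homotopy equivalence $\Phi\coloneqq u_{-1}\cdot(\_)\cdot u_1\colon X\to\calC(-1,2)$ (equivariant since $u_{\pm1}$ are $G$-fixed).

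Next I would introduce two products on $X$, namely $\mu(x,y)\coloneqq r\bigl(x\cdot\sigma(y)\bigr)$ and $\mu'(x,y)\coloneqq l\bigl(\bar\sigma(x)\cdot y\bigr)$. The relations defining $\sigma$ and $\bar\sigma$ show that $x_0$ is a two-sided homotopy unit for both. Pushing both into $\calC(-1,2)$ along $\Phi$ and using the strict associativity of composition in $\calC$, one finds
\begin{align*}
\Phi(\mu(x,y))&\simeq u_{-1}\cdot x\cdot\sigma(y),\\
\Phi(\mu'(x,y))&\simeq\bar\sigma(x)\cdot y\cdot u_1\simeq\bigl(\bar\sigma(x)\cdot x_0\bigr)\cdot\sigma(y)\simeq u_{-1}\cdot x\cdot\sigma(y),
\end{align*}
so that $\mu\simeq\mu'$ as maps $X\times X\to X$. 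Moreover, because composition is $G$-equivariant, $\sigma(y)\in\calC(1,2)$ and $\bar\sigma(x)\in\calC(-1,0)$ are $G$-fixed, and $r$, $l$ are $G$-equivariant up to homotopy, one obtains $f_*\bigl(\mu(x,y)\bigr)\simeq\mu(f_*x,\,y)$ and $f_*\bigl(\mu'(x,y)\bigr)\simeq\mu'(x,\,f_*y)$ for every $f\in G$; specialising $x=x_0$, resp.\ $y=x_0$, and invoking the unit relations gives $f_*(y)\simeq\mu(f_*x_0,\,y)$ and $f_*(x)\simeq\mu'(x,\,f_*x_0)$.

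Putting $a_f\coloneqq f_*x_0$ and combining the last two homotopies with $\mu\simeq\mu'$, one gets $\mu(a_f,\,x)\simeq f_*(x)\simeq\mu(x,\,a_f)$ for all $x\in X$. Hence, for $f,g\in G$,
\[f_*g_*x_0=f_*(a_g)\simeq\mu(a_f,a_g)\simeq\mu(a_g,a_f)\simeq g_*(a_f)=g_*f_*x_0,\]
so there is a path $\gamma$ in $X$ from $f_*g_*x_0$ to $g_*f_*x_0$. Since $f_*g_*(x)\simeq f_*\bigl(\mu'(x,a_g)\bigr)\simeq\mu'(x,\,f_*a_g)=\mu'(x,\,f_*g_*x_0)$, and likewise $g_*f_*(x)\simeq\mu'(x,\,g_*f_*x_0)$, the assignment $(x,t)\mapsto\mu'\bigl(x,\gamma(t)\bigr)$ is a homotopy from $f_*g_*$ to $g_*f_*$. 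This is exactly the assertion that $f$ and $g$ commute up to homotopy as self-maps of $\calC(0,1)$.

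The only conceptual move is the passage to the two auxiliary products and the sliding identity $\mu\simeq\mu'$; the bulk of the proof is coherence bookkeeping --- verifying that $\sigma$ and $\bar\sigma$ can be chosen with the stated relations, that $x_0$ is a homotopy unit for $\mu$ and $\mu'$, that $r$ and $l$ are $G$-equivariant up to homotopy, and that each "$\simeq$" above is sufficiently natural in its parameters to assemble into an honest homotopy of maps. Everything here is extracted from the strict associativity of composition in $\calC$ together with hypotheses (2) and (3), keeping track of the $G$-action throughout; this is the part I would expect to require the most care, though none of it is difficult.
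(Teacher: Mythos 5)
The paper does not contain its own proof of this lemma---it is stated as a citation of \cite[Lemma 4.2]{berw} and used without reproving it---so there is no argument in the source to compare against line by line. Your proof, however, is correct, and it is exactly the kind of Eckmann--Hilton argument the statement calls for: you build two homotopy-associative, homotopy-unital products $\mu$ and $\mu'$ on $\calC(0,1)$ using the $G$-fixed elements $u_1\in\calC(1,2)$ and $u_{-1}\in\calC(-1,0)$ together with the unit $x_0$, show $\mu\simeq\mu'$ by pushing both into $\calC(-1,2)$ and using strict associativity in $\calC$, and then exploit that the $G$-action is ``left-linear'' for $\mu$ but ``right-linear'' for $\mu'$ (since $\sigma(y)$ and $\bar\sigma(x)$ land in $G$-fixed morphism spaces). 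I checked the coherence points you flag at the end: $\sigma(x_0)\simeq u_1$ and $\bar\sigma(x_0)\simeq u_{-1}$ follow from injectivity-up-to-homotopy of $x_0\cdot\_$ and $\_\cdot x_0$; the homotopy inverses $r$ and $l$ are $G$-equivariant up to homotopy because they invert strictly $G$-equivariant homotopy equivalences (post-compose with the original map and cancel); and the final homotopy $(x,t)\mapsto\mu'(x,\gamma(t))$ assembles correctly from homotopies that are all continuous in $x$. The one stylistic remark: the intermediate statement $\mu(a_f,a_g)\simeq\mu(a_g,a_f)$ is not needed as a separate step---you already have $f_*g_*x_0\simeq\mu(a_f,a_g)$ from one specialization and $g_*f_*x_0\simeq\mu(a_f,a_g)$ from the other, so $\gamma$ comes directly---but your chain is also fine.
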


\begin{proof}[Proof of \pref{Theorem}{thm:action-map-boundary}]
	This is analogous to \cite[Proof of Theorem 4.1]{berw}. Consider a closed disk $\iota\colon D\subset S^{d-1}\times (0,1)$ in the interior. Since $\calF$ is cellular, surgery stable, there exists a metric $h\in\calF(S^{d-1}\times[0,1], \iota)_{g_\circ,g_\circ}$ which is isotopic to the product metric $g_\circ +dt^2$ relative to the boundary. Let $T\coloneqq (S^{d-1}\times [0,1])\setminus \mathrm{int}(\iota(D))$ and we denote by $P = S^{d-1}$ the boundary component created by cutting out $D$. Furthermore, let $\overline h\in\calF(T)$ be the metric obtained by cutting out the metric $\iota_*g^{0,1}$ on $\iota(D)$, where $g^{0,1}$ is the metric from \pref{Proposition}{prop:gluingdisk}. $\overline h$ restricts to the round metric on all three boundary components. We get the sequence of maps
	\[\calF(M)_{g_\circ}\overset{\mu(\_,{\overline h})}\too\calF(M\cup_{S^{d-1}\times\{0\}} T)_{g_\circ,g_\circ}\overset{\mu(\_,{\iota_*g^{0,1}})}\too\calF(M\cup_{S^{d-1}\times\{0\}} T\cup_P \iota(D))_{g_\circ}.\]
	The composition is given by gluing in $h$ which is homotopic to gluing in $g_\circ+dt^2$ and therefore is a homotopy equivalence. The right-most map is a homotopy equivalence by \pref{Proposition}{prop:gluingdisk} and so $\mu_{\overline h}$ also is a homotopy equivalence, too. Let $V:=M\cup_{S^{d-1}\times\{0\}} T$ and let us consider this as a cobordism $S^{d-1} = P \leadsto S^{d-1}\times\{1\} = S^{d-1}$.

	We now apply \pref{Lemma}{lem:eckmann-hilton} to the following scenario: Let $G:=\diff_\partial(M)$ and let $\calC(0,1) = \calF(V)_{g_\circ,g_\circ}$. Furthermore, let
{\small	\[\calC(m,n) = \begin{cases}
		\calF(S^{d-1}\times[m,0]\cup V\cup S^{d-2}\times [0,n])_{g_\circ,g_\circ}	&\text{for } m\le0, n\ge1\\
		\calF(S^{d-1}\times[m,n])_{g_\circ,g_\circ}	&\text{for } m<n\le0\text{ or } n>m\ge1\\
		\emptyset
	\end{cases}.\]}
	\noindent Let $G$ act on $\calC(m,n)$ by extending a diffeomorphism $f\in\diff_\partial(M)$ by the identity and then acting via pullback, \ie G acts on $M$ via pullback and trivially everywhere else. With this action the composition given by gluing metrics is obviously $G$-equivariant. For $m\ne0$ let $u_m\coloneqq g_\circ^{d-2} + dt^2\in \calC(m,m+1)$ and by \pref{Lemma}{lem:stable-metrics} there exists an $x_0\in\calC(0,1)$ such that the hypothesis of \pref{Lemma}{lem:eckmann-hilton} is satisfied. Thus, the action of $\diff_\partial(M)$ on $\calF(V)_{g_\circ,g_\circ}$ factors through an abelian group. The Theorem follows because the gluing map $\mu(\_,{\overline h})\colon \calF(M)_{g_\circ}\to\calF(V)_{g_\circ,g_\circ}$ is a $\diff_\partial(M)$-equivariant homotopy equivalence.
\end{proof}

\subsection{Construction of maps}\label{sec:construction}

\noindent We will now construct maps into $\calF(M)$ for certain even-dimensional manifolds. This is similar to \cite[Section 4.2]{berw}. Let $n\ge c$ and let $W\colon\emptyset\leadsto S^{2n-1}$ be a $(c-2)$-connected $\boc$ cobordism which is $\boc$-cobordant to $D^{2n}$ relative to the boundary. Let
\[K\coloneqq([0,1]\times S^{2n-1})\# S^n\times S^n\colon S^{2n-1}\leadsto S^{2n-1}.\]
For $i=0,1,\dots$ let $K\mid_{i}\coloneqq S^{2n-1}$ and $K\mid_{[i,i+1]}\colon K\mid_i\leadsto K\mid_{i+1}$ be a copy of $K$. If we consider $W\colon\emptyset\leadsto K\mid_{0}$ we can define
\[W_k\coloneqq W\cup \bigcup_{i=0}^{k-1} K\mid_{[i,i+1]}\colon \emptyset\leadsto K\mid_i.\]
We abbreviate $D_k\coloneqq \diff_\partial(W_k)$, $B_k\coloneqq BD_k$ and $\pi_k\colon E_k\coloneqq ED_k\times_{D_k} W_k\to B_k$. There is a homomorphism $D_k\to D_{k+1}$ given by extending by the identity and we get induced maps $\iota_k\colon B_k\to B_{k+1}$ on classifying spaces. Furthermore we write $\calF_k\coloneqq\calF(W_k)_{g_{\circ}}$, $T_k\coloneqq ED_k\times_{D_k}\calF_k$ and we denote by $p_k\colon T_k\to B_k$ the projection maps and by $\mu_k\coloneqq\mu(\_,h_k)\colon \calF_k\to\calF_{k+1}$ the maps gluing in the stable metrics $h_k\in\calF(K|_{[k,k+1]})_{g_\circ,g_\circ}$ which exist by \pref{Lemma}{lem:stable-metrics}. The map $\mu_k$ is $D_k$-equivariant and so there is an induced map between the Borel constructions
\begin{center}
\begin{tikzpicture}
	\node (0) at (0,2.4) {$\calF_k$};
	\node (1) at (3,2.4) {$\calF_{k+1}$};
	\node (2) at (0,1.2) {$T_k$};
	\node (3) at (3,1.2) {$T_{k+1}$};
	\node (4) at (0,0) {$B_k$};
	\node (5) at (3,0) {$B_{k+1}$};

	\draw[->] (0) to node[above]{$\mu_k$} (1);
	\draw[->] (2) to node[above]{} (3);
	\draw[->] (4) to node[above]{$\lambda_k$} (5);

	\draw[->] (0) to node[above]{} (2);
	\draw[->] (2) to node[above]{} (4);
	\draw[->] (1) to node[above]{} (3);
	\draw[->] (3) to node[above]{} (5);
\end{tikzpicture}
\end{center}
We introduce the following notation:
\begin{align*}
	B_\infty&\coloneqq\mathrm{hocolim}_k B_k,	&& \iota_{k,\infty}\colon B_k\to B_\infty,\\
	\calF_\infty&\coloneqq\mathrm{hocolim}_k \calF_k &&\mu_{k,\infty}\colon \calF_k\to \calF_\infty\\
	T_\infty&\coloneqq\mathrm{hocolim}_k T_k, 	&& p_\infty \coloneqq\underset{k}{\mathrm{hocolim}}\ p_k\colon T_\infty\to B_\infty.
\end{align*}
The construction (\ref{sec:inddiff-to-ind}) gives classes $\beta_k\in\ko^{-2n}(p_k)$ that assemble to a class $\beta_\infty\in\ko^{-2n}(p_\infty)$ (cf.\ \cite[Proposition 4.9]{berw}).

\begin{lem}\label{lem:assumption}
	There exists a cobordism $W\colon \emptyset\leadsto S^{2n-1}$ such that there exists an acyclic map $\Psi\colon B_\infty\to \mtt_{c-1}(2n)$ and the maps
	\[\ind(E_k,h_\circ), \Omega^\infty\lambda_{-2n}\circ\Psi\circ\iota_{k,\infty}\colon B_k\too \Omega^{\infty+2n}\ko\]
	are weakly homotopic.
\end{lem}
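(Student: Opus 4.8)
The key observation is that $\ind(E_k,h_\circ)\in\ko^{-2n}(B_k)$ depends only on the $\Spin$-manifold bundle $\pi_k\colon E_k\to B_k$ and the cylindrical-end Dirac operators of its fibres; the Riemannian functor $\calF$ does not enter. So the statement is essentially part of \cite[Section~4]{berw}, and the plan is to reprove it along those lines. First I would fix the cobordism $W$: by surgery theory (or the construction in \cite[Section~4]{berw}) there is a $(c-2)$-connected $B\ort(2n)\scpr{c-1}$-manifold $W\colon\emptyset\leadsto S^{2n-1}$ which is $B\ort(2n)\scpr{c-1}$-cobordant to $D^{2n}$ relative to the boundary. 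Since $S^n\times S^n$ is stably parallelizable and $c\le n$, the cobordism $K=([0,1]\times S^{2n-1})\#(S^n\times S^n)$ admits a $B\ort(2n)\scpr{c-1}$-structure extending the cylindrical one on $[0,1]\times S^{2n-1}$ and restricting to the standard structure on the two boundary spheres. Thus each $W_k$ inherits a $B\ort(2n)\scpr{c-1}$-structure compatible with the inclusions $W_k\hookrightarrow W_{k+1}$, and, since $[S^n\times S^n]=0$ in the relevant bordism group, all $W_k$ stay $B\ort(2n)\scpr{c-1}$-cobordant to $D^{2n}$.

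Next I would produce $\Psi$. The parametrized Pontryagin--Thom (scanning) maps $\alpha_k\colon B_k=B\diff_\partial(W_k)\to\Omega^\infty\mtt_{c-1}(2n)$ associated to the vertical tangent bundles of the $B\ort(2n)\scpr{c-1}$-structured bundles $\pi_k$ are compatible with the stabilization maps $\iota_k\colon B_k\to B_{k+1}$, by naturality and the chosen compatibility of structures; hence they assemble to a map $\Psi\colon B_\infty\to\Omega^\infty\mtt_{c-1}(2n)$ with $\alpha_k\simeq\Psi\circ\iota_{k,\infty}$. Since $2n\ge 2c\ge 6$ and the $W_k$ arise from $W$ by iterated interior connected sum with $S^n\times S^n$, the stable homology theorem of Galatius--Randal-Williams \cite{grw_stable} shows that $\Psi$ is acyclic, as required.

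It then remains to identify $\ind(E_k,h_\circ)$, viewed through its homotopy-theoretic realization as a map $B_k\to\Omega^{\infty+2n}\ko$, with $\Omega^\infty\lambda_{-2n}\circ\Psi\circ\iota_{k,\infty}=\Omega^\infty\lambda_{-2n}\circ\alpha_k$. Here I would form the elongation of $E_k$ by gluing on $B_k\times[0,\infty)\times S^{2n-1}$ with the round metric $h_\circ$; it has positive scalar curvature at infinity, so $\ind(E_k,h_\circ)\in\ko^{-2n}(B_k)$ is defined and independent of the fibrewise metric on the interior. Then, by the parametrized Atiyah--Singer index theorem in the form used in \cite[Section~4]{berw} (with $\lambda_{-d}$ as in \cite[p.~796]{berw}), this family index is pulled back from the universal case and therefore equals the composite $B_k\xrightarrow{\alpha_k}\Omega^\infty\mtt_{c-1}(2n)\to\Omega^\infty\mt\Spin(2n)\xrightarrow{\Omega^\infty\lambda_{-2n}}\Omega^{\infty+2n}\ko$; combined with the previous paragraph, this finishes the argument.

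I expect the last step to be the main obstacle: making precise, and checking, that the cylindrical-end family Dirac index over $B_k$ is represented by the composite through the Madsen--Tillmann--Weiss spectrum and $\lambda_{-2n}$, i.e.\ transporting the index computation of \cite[Section~4]{berw} to the present setting with a nullcobordism $W$. Since $\ind(E_k,h_\circ)$ makes no reference to $\calF$, this ought to be a direct adaptation rather than genuinely new work; the only new geometric input is the choice of $W$ in the first step, which is why its existence is part of the statement.
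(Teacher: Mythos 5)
Your proposal follows essentially the same route as the paper: choose a suitable $W$ as in \cite{berw}, build $\Psi$ from the parametrized Pontryagin--Thom maps $\alpha_k$, deduce acyclicity from Galatius--Randal-Williams, and then identify $\ind(E_k,h_\circ)$ with $\Omega^\infty\lambda_{-2n}\circ\alpha_k$ via the index theorem of \cite[Proposition 4.16]{berw}. That is exactly what the paper does.

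One point is stated too weakly and is worth flagging. You describe $W$ as a $(c-2)$-connected $B\ort(2n)\scpr{c-1}$-manifold, nullcobordant to $D^{2n}$ in that tangential structure. That is the standing assumption from the set-up in the paper's construction section, but it is \emph{not} sufficient for the Galatius--Randal-Williams theorem to give acyclicity of $\Psi$. The paper's proof of the lemma chooses a more special $W$ (via the analogue of \cite[Proposition 4.20]{berw}) for which the \emph{structure map} $\ell_W\colon W\to B\ort(2n)\scpr{c-1}$ is $n$-connected. This $n$-connectivity of $\ell_W$ — not just $(c-2)$-connectivity of $W$ itself — is what guarantees that the stabilized bundles $\pi_k$ realize the "universal" end in the Galatius--Randal-Williams framework and hence that $\alpha_\infty$ is acyclic. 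Since you defer to \cite[Section 4]{berw} for the construction of $W$, the argument can be completed, but the connectivity hypothesis you quote would not by itself justify invoking \cite{grw_stable}; you should strengthen it to $n$-connectivity of the structure map.

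The rest of the argument — compatibility of the $\alpha_k$ with the stabilization maps, the decomposition of the index map through $\mtt_{c-1}(2n)\to\mt\Spin(2n)\xrightarrow{\lambda_{-2n}}\Sigma^{-2n}\ko$, and the observation that $\ind(E_k,h_\circ)$ is independent of $\calF$ — matches the paper's proof, which packages the last step into the commuting triangle diagram whose pieces are \cite[Proposition 4.16]{berw} (Atiyah--Singer), compatibility of $\Spin$ with the $\theta_{c-1}$-structure, and the definition of $\lambda_{-2n}$.
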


\begin{rem}
	Recall that a map $f\colon X\to Y$ is called acyclic, if for each $y\in Y$ the homotopy fiber of $\hofib_y(f)$ has the singular homology of a point.
\end{rem}

\begin{proof}[Proof of \pref{Lemma}{lem:assumption}]
	By the analogue of \cite[Proposition 4.20]{berw} there exists a $B\ort(2n)\scpr{c-1}$-cobordism $W\colon \emptyset\leadsto S^{2n-1}$ which itself is $B\ort(2n)\scpr{c-1}$-cobordant to $D^{2n}$ and such that the structure map $W\to B\ort(2n)\scpr{c-1}$ is $n$-connected. The bundles $\pi_k$ from above yield a Pontryagin--Thom map
	\[\alpha_k\colon B_k\to \Omega^\infty\mtt_{c-1}(d)\]
	and since the $\Spin$-structures on all bundles $\pi_k$ are compatible, we obtain a map
	\[\alpha_\infty\colon B_\infty\to \Omega^\infty\mtt_{c-1}(d).\]
	The map $\alpha_\infty$ is acyclic for our choice of $W$ which follows from \cite[Theorem 1.5]{grw_stability} in the same way as demonstrated in the proof of \cite[Theorem 4.19]{berw}. 
	
	It remains to show that the maps $\iota_{k,\infty}^*\alpha_\infty^*\Omega^\infty\lambda_{-2n}$ and $\ind(E_k, h_\circ)$ are weakly homotopic. For this note that a $W$-bundle $E\to X$ with a $\theta_{c-1}$-structure on the vertical tangent bundle also admits a $\Spin$-structure and we get the following diagram
	\begin{center}
	\begin{tikzpicture}
		\node (0) at (0,0) {$X$};
		\node (1) at (4,1.5) {$\Omega^{\infty}\mtt_{c-1}(d)$};
		\node (2) at (4,0) {$\Omega^{\infty}\mtt_{2}(d)$};
		\node (3) at (8,0) {$\Omega^{\infty+d}\ko$}; 
		
		\draw[->] (0) to node[above, sloped]{$\alpha^\theta_E$} (1);
		\draw[->] (0) to node[above, xshift=10pt]{$\alpha^\Spin_E$} (2);
		\draw[->] (1) to node[above, sloped]{$\Omega^\infty\lambda_{-2n}$} (3);
		\draw[->] (2) to node[above,  xshift=-10pt]{$\Omega^\infty\lambda_{-2n}$} (3); 
		\draw[right hook->] (1) to (2);
		\draw[->, bend right=20] (0) to node[above, sloped]{$\ind(E,h_\circ)$} (3);
	\end{tikzpicture}
	\end{center}
	The maps in the bottom triangle are weakly homotopic by \cite[Proposition 4.16]{berw} which is a version of the Atiyah--Singer index theorem. The left-hand triangle commutes since the $\Spin$-structure on $T_{(v)}E$ is precisely the one induced by the $\theta_{c-1}$-structure and the right-hand triangle commutes by definition. Therefore the entire diagram commutes and hence we obtain that $\Omega^\infty\lambda_{-2n}\circ\alpha^\theta_E$ and $\ind(E,h_\circ)$ are weakly homotopy which specifies to our claim for $X=B_\infty$.	
\end{proof}

\noindent\emph{Let us from now on abbreviate $X\coloneqq\mtt_{c-1}(2n)$.} We will now proceed to construct a map $\rho_\Psi\colon\Omega X\to\calF(W)_{g_\circ}$ out of $\Psi$. We have the following diagrams
\begin{center}
\begin{tikzpicture}
	\node (0) at (0,1.2) {$B_\infty$};
	\node (1) at (3,1.2) {$X$};
	\node (2) at (3,0) {$B\haut(\calF_\infty)$};

	\draw[->, dotted] (1) to node[right] {$\gamma$} (2);
	\draw[->] (0) to node[above]{$\Psi$} (1);
	\draw[->] (0) to node[below, xshift=-4pt]{$B\eta$} (2);
\end{tikzpicture}
\end{center}
where $\eta$ is the action map. The group $\ker(\pi_1(\Psi))$ is perfect since $\Psi$ is acyclic and $\pi_1(B\eta)$ has abelian image by \pref{Theorem}{thm:action-map-boundary}. Hence $\pi_1(B\eta)(\ker(\pi_1(\Psi)))$ is trivial and so $\ker(\pi_1(\Psi))\subset \ker(B\eta)$. By \cite[Proposition 3.1]{hausmannhusemoller} the acyclicity of $\Psi$ implies that the map $\gamma$ exists and is unique up to homotopy.

 Let $p_\infty^+\colon T_\infty^+\to X$ denote the fibration obtained by pulling back the universal fibration $E\to B\haut(\calF_\infty)$ with fiber $\calF_\infty$ along $\gamma$. We get an induced commutative diagram of fibrations
\begin{center}
\begin{tikzpicture}
	\node (0) at (0,1.2) {$T_\infty$};
	\node (1) at (3,1.2) {$T_\infty^+$};
	\node (2) at (6,1.2) {$E$};
	\node (3) at (0,0) {$B_\infty$};
	\node (4) at (3,0) {$X$};
	\node (5) at (6,0) {$B\haut(\calF_\infty)$};

	\draw[->] (0) to node[auto]{$\hat\Psi$} (1);
	\draw[->] (1) to (2);
	\draw[->] (0) to node[auto]{$p_\infty$}(3);
	\draw[->] (1) to node[auto]{$p_\infty^+$} (4);
	\draw[->] (2) to (5);
	\draw[->] (3) to node[auto]{$\Psi$}(4);
	\draw[->] (4) to node[auto]{$\gamma$}(5);
\end{tikzpicture}
\end{center}
where all fibers are homotopy equivalent to $\calF_\infty$. The homotopy fibers of $\Psi$ and $\hat\Psi$ are homotopy equivalent and hence $\hat\Psi$ is acyclic. Therefore, the class $\beta_\infty$ extends to a unique class $\beta^+_\infty\in\ko^{-2n}(p_\infty^+)$. Now let $\rho_\infty\colon \Omega X\to\calF_\infty$ denote the fiber transport of the fibration $p_\infty^+$. Since $\mu_{0,\infty}\colon\calF_0\to\calF_\infty$ is a homotopy equivalence we obtain a map
\[\rho_\Psi\colon \Omega X\to \calF_0 = \calF(W)_{g_\circ}\]
that satisfies $\Omega\bas(\beta_\infty^+) = \rho_\Psi^*\mu_{0,\infty}^*\trg(\beta_\infty^+)$. We have that the following maps are weakly homotopic
\begin{equation}
\bas(\beta_\infty^+)\circ\Psi\circ \iota_{k,\infty} \overset{\ref{prop:inddiff-to-ind}}\sim \ind(E_k,g_\circ) \overset{\ref{lem:assumption}}\sim \lambda_{-2n}\circ\Psi\circ\iota_{k,\infty} \in \ko^{-2n}(B_k).\label{equ:1}
\end{equation}
Since $\Psi$ is acyclic, this implies that $\bas(\beta_\infty^+)\sim\lambda_{-2n}$ and hence $\Omega\bas(\beta_\infty^+)\sim\Omega\lambda_{-2n}\colon \Omega X\to\Omega^{\infty+2n+1}\ko$. Therefore we get the following equation of weakly homotopic maps:
\begin{align*}
	\inddiff_g\circ\rho_\Psi &\overset{\ref{prop:inddiff-to-ind}}\sim \trg\beta_0^+\circ\rho_\Psi \sim \trg\beta_\infty^+\circ \mu_\infty\circ \rho_\Psi\overset{\ref{lem:abstractfiber}}\sim \Omega\bas(\beta^+_\infty)\overset{(\ref{equ:1})}\sim\Omega\lambda_{-2n}
\end{align*}
and together with \pref{Remark}{rem:compare-f-to-psc} we arrive at the following result.
\begin{thm}\label{thm:mainforw}
	The map $\Omega^{\infty+1}\lambda_{-2n}$ and the composition
	\[\Omega^{\infty+1}\mtt_{c-1}(2n)\overset{\rho_\Psi}\too \calF(W)_{h_\circ}\embeds\calR_\psc(W)_{h_\circ}\overset{\inddiff_g}\too \Omega^{\infty+2n+1}\ko \]
	induce the same map on homotopy groups.
\end{thm}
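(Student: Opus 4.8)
\noindent The plan is to chain together the index-theoretic machinery of \pref{Section}{sec:inddiff-to-ind} with the construction of $\rho_\Psi$ from \pref{Section}{sec:construction}, so that the assertion reduces to a formal consequence of \pref{Proposition}{prop:inddiff-to-ind}, \pref{Lemma}{lem:abstractfiber}, \pref{Lemma}{lem:assumption} and the acyclicity of $\Psi$. Since weakly homotopic maps induce the same map on homotopy groups and \pref{Remark}{rem:compare-f-to-psc} identifies the composite $\calF(W)_{h_\circ}\embeds\calR_\psc(W)_{h_\circ}\overset{\inddiff_g}{\too}\Omega^{\infty+2n+1}\ko$ with $\inddiff^\calF_g$ up to homotopy, it suffices to produce a chain of weak homotopies from $\inddiff^\calF_g\circ\rho_\Psi$ to $\Omega^{\infty+1}\lambda_{-2n}$ as maps out of $\Omega^{\infty+1}\mtt_{c-1}(2n)=\Omega X$.

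First I would rewrite the left-hand side in terms of the classes $\beta$. By \pref{Proposition}{prop:inddiff-to-ind}(2), applied to the $W$-bundle $\pi_0\colon E_0\to B_0$ with basepoint $g_\circ$, the transgression $\trg(\beta_0)$ realizes $\inddiff^\calF_g$; and by \pref{Proposition}{prop:inddiff-to-ind}(3) together with the compatibility of the $\beta_k$ along the tower, $\beta_0$ is the restriction of $\beta_\infty^+$ along $T_0\embeds T_\infty\overset{\hat\Psi}{\too}T_\infty^+$, so that $\trg(\beta_0)$ corresponds to $\mu_{0,\infty}^*\trg(\beta_\infty^+)$ under the identification of the relevant homotopy fibers. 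Since $\rho_\Psi$ is set up so that $\mu_{0,\infty}\circ\rho_\Psi$ is homotopic to the fiber transport $\rho_\infty$ of the fibration $p_\infty^+$, applying \pref{Lemma}{lem:abstractfiber} to $p_\infty^+$ and $\beta_\infty^+$ gives
\[\inddiff^\calF_g\circ\rho_\Psi\ \sim\ \trg(\beta_\infty^+)\circ\rho_\infty\ \sim\ \Omega\bas(\beta_\infty^+).\]

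It then remains to identify the realization $\bas(\beta_\infty^+)\colon X\to\Omega^{\infty+2n}\ko$ with $\Omega^\infty\lambda_{-2n}$ up to weak homotopy, after which looping and concatenating the displays completes the argument. Here I would pull back along $\Psi$: since $\hat\Psi^*\beta_\infty^+=\beta_\infty$ and the $\beta_k$ restrict to $\beta_\infty$, for each $k$ naturality and \pref{Proposition}{prop:inddiff-to-ind}(1) give $\bas(\beta_\infty^+)\circ\Psi\circ\iota_{k,\infty}=\bas(\beta_k)=\ind(E_k,h_\circ)$, which by \pref{Lemma}{lem:assumption} is weakly homotopic to $\Omega^\infty\lambda_{-2n}\circ\Psi\circ\iota_{k,\infty}$. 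As any map from a finite complex into $B_\infty=\hocolim{k}B_k$ factors up to homotopy through a finite stage $B_k$, it follows that $\bas(\beta_\infty^+)\circ\Psi\sim\Omega^\infty\lambda_{-2n}\circ\Psi$; and then the acyclicity of $\Psi$ — which makes $\Psi$ induce a bijection on homotopy classes of maps into the infinite loop space $\Omega^{\infty+2n}\ko$, cf.\ \cite{hausmannhusemoller} — forces $\bas(\beta_\infty^+)\sim\Omega^\infty\lambda_{-2n}$, hence $\Omega\bas(\beta_\infty^+)\sim\Omega^{\infty+1}\lambda_{-2n}$.

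I expect the genuinely delicate point to be this last paragraph: cancelling the acyclic map $\Psi$, and beforehand promoting the stagewise weak homotopies over the $B_k$ to a statement over $B_\infty$ and then over $X$. This is exactly where the acyclicity of $\Psi$ (rather than a mere homology equivalence onto its image), together with the Pontryagin--Thom and Galatius--Randal-Williams input encoded in \pref{Lemma}{lem:assumption}, enters. Everything else is careful bookkeeping: tracking which $\beta$ restricts to which along $B_0\to\cdots\to B_\infty\overset{\Psi}{\too}X$, and checking that the abstract fiber-transport formalism of \pref{Lemma}{lem:abstractfiber} is applied to $p_\infty^+$ with the correct identifications of its fiber and homotopy fiber with $\calF_\infty$. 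I would also verify at the very start that the round metric $h_\circ$ occurring in the statement coincides with the metric $g_\circ$ on $S^{2n-1}$ used throughout \pref{Section}{sec:construction}, so that $\calF(W)_{h_\circ}=\calF_0$ on the nose and $\rho_\Psi$ indeed has the target displayed in the theorem.
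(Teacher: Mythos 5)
Your proposal follows essentially the same path as the paper's proof: use \pref{Proposition}{prop:inddiff-to-ind} and \pref{Lemma}{lem:abstractfiber} to identify $\inddiff_g\circ\rho_\Psi$ with $\Omega\bas(\beta^+_\infty)$, then compare $\bas(\beta^+_\infty)$ with $\lambda_{-2n}$ stagewise over the $B_k$ via \pref{Lemma}{lem:assumption} and cancel the acyclic map $\Psi$. The only difference is that you spell out a couple of steps the paper leaves implicit (the reduction via \pref{Remark}{rem:compare-f-to-psc} and the passage from stagewise homotopies over $B_k$ to a statement over $B_\infty$), which is sound bookkeeping rather than a different argument.
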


\subsection{Propagating the detection result}\label{sec:propagation}
One consequence of the index additivity theorem is the following propagation result. Let $\calF$ be a cellular, parametrized codimension $c\ge3$ surgery stable Riemannian functor and implies positive scalar curvature. Let $d\ge 2c$ and let $W\colon\emptyset\leadsto S^{d-1}$ be a $(c-2)$-connected $B\ort\langle c-1\rangle$-manifold and $X\colon W\leadsto D^{d}$ a $B\ort\langle c-1\rangle$-cobordism relative to the boundary. By removing an embedded disk $D\subset W$ we obtain a bordism $W_0\colon S^{d-1}\leadsto S^{d-1}$ which is $B\ort\langle c-1\rangle$-bordant to the cylinder relative to the boundary via $X_0$. Therefore, there exists a stable metric $\tilde g$ on $W_0$ by \pref{Lemma}{lem:stable-metrics}. Let $g\coloneqq g^{0,1}\cup \tilde g\in \calF(W)_{h_\circ}$.

By performing surgery on $X_0$ we may assume that $X_0$ is $(c-2)$-connected and after choosing an appropriate handle decomposition of $X_0$ we get a surgery map $\calS_{\calF,X_0,H}$ (cf.\ \pref{Corollary}{cor:surgery-equivalence} and the discussion below it). Thus, we obtain a (homotopy class of a) metric $\tilde g\coloneqq\calS_{\calF,X_0, H}(g_\circ + dt^2)$ and $g\coloneqq g^{0,1} \cup \tilde g\in\calR^+(W)_{h_{\circ}}$ for $g^{0,1}$ the metric from \pref{Proposition}{prop:gluingdisk}.

\begin{prop}\label{prop:propagation}
Let $W'\colon \emptyset\leadsto M'$ be an arbitrary compact $\Spin$-cobordism with $h'\in\calR(M')$ and $g'\in\calF(W')_{h'}$. We get the following result. If there exists a $CW$ complex $X$ with a map $\hat a\colon X\to \Omega^{\infty+d+1}\ko$ and a factorization
	\[X\overset{\rho}\too \calF(W)_{h_\circ} \overset{\inddiff_g}\too \Omega^{\infty+d+1}\ko\]
	of $\hat a$ up to homotopy, then there exists a factorization
	\[X\overset{\rho'}\too \calF(W')_{h'} \overset{\inddiff_{g'}}\too \Omega^{\infty+d+1}\ko\]
	of $\hat a$ up to homotopy, too.
\end{prop}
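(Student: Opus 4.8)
The plan is to relate $\calF(W')_{h'}$ to $\calF(W)_{h_\circ}$ by a zigzag of maps which is compatible with the index difference, the compatibility being supplied by the additivity theorem \pref{Theorem}{thm:additivity}; no genuinely new geometric input is required. First I would normalize $g'$: by \pref{Proposition}{prop:gluingdisk} the inclusion $\calF(W',\iota;g^{0,1})\embeds\calF(W')_{h'}$ is a weak homotopy equivalence for an embedded disk $\iota\colon D^d\embeds W'$, so $g'$ lies in the path component of a metric $g''\in\calF(W')_{h'}$ with $g''|_{\iota(D^d)}=g^{0,1}$; a path from $g'$ to $g''$ induces a homotopy between $\inddiff_{g'}$ and $\inddiff_{g''}$ as unbased maps (both being restrictions of the universal class $\inddiff\in\Omega\ko^{-d}(\calF(W')_{h'}\times\calF(W')_{h'},\Delta)$ along homotopic slices), so it is enough to factor $\hat a$ through $\inddiff_{g''}$. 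Put $V\coloneqq W'\setminus\iota(\mathrm{int}\,D^d)\colon S^{d-1}\leadsto M'$ and $g_V\coloneqq g''|_V\in\calF(V)_{h_\circ,h'}$, so that $W'=D^d\cup_{S^{d-1}}V$ and $g''=g^{0,1}\cup g_V$.

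Next I would glue $V$ onto $W$. Consider the cobordism $W\cup_{S^{d-1}}V\colon\emptyset\leadsto M'$ and the gluing map $\mu(\_,g_V)\colon\calF(W)_{h_\circ}\to\calF(W\cup_{S^{d-1}}V)_{h'}$. Applying \pref{Theorem}{thm:additivity} to $\emptyset\overset{W}\leadsto S^{d-1}\overset{V}\leadsto M'$ with the $V$-coordinate held fixed at its basepoint $g_V$ (so that $\inddiff_{g_V}(g_V)=\ast$) gives $\inddiff_{g\cup g_V}\circ\mu(\_,g_V)\simeq\inddiff_g$. Hence $\rho_1\coloneqq\mu(\_,g_V)\circ\rho\colon X\to\calF(W\cup_{S^{d-1}}V)_{h'}$ satisfies $\inddiff_{g\cup g_V}\circ\rho_1\simeq\hat a$.

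It remains to identify $\calF(W\cup_{S^{d-1}}V)_{h'}$ with $\calF(W')_{h'}$. Recall from the preamble that $g=g^{0,1}\cup\tilde g$, where $\tilde g$ is a stable metric on $W_0=W\setminus\mathrm{int}\,D^d$ obtained from the cylinder metric $h_\circ+\dt^2$ by a surgery equivalence $\calS\colon\calF(S^{d-1}\times[0,1])_{h_\circ,h_\circ}\xrightarrow{\simeq}\calF(W_0)_{h_\circ,h_\circ}$ built from middle-range interior surgeries (using that $W_0$ is $\boc$-cobordant to $S^{d-1}\times[0,1]$ relative to the boundary). Since $W\cup_{S^{d-1}}V=D^d\cup_{S^{d-1}}W_0\cup_{S^{d-1}}V$ and these surgeries take place in the interior of the $W_0$-piece — disjoint from $D^d$ and from $V$ — \pref{Corollary}{cor:surgery-equivalence} and the discussion following it provide a homotopy equivalence $\Theta\colon\calF(W\cup_{S^{d-1}}V)_{h'}\xrightarrow{\simeq}\calF(D^d\cup_{S^{d-1}}V)_{h'}=\calF(W')_{h'}$ which is natural for the index difference and sends the basepoint $g\cup g_V=g^{0,1}\cup\tilde g\cup g_V$ to $g^{0,1}\cup(h_\circ+\dt^2)\cup g_V$, which is $g''=g^{0,1}\cup g_V$ after absorbing the cylinder into a collar. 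Thus $\inddiff_{g''}\circ\Theta\simeq\inddiff_{g\cup g_V}$, and setting $\rho'\coloneqq\Theta\circ\rho_1=\Theta\circ\mu(\_,g_V)\circ\rho$ we obtain $\inddiff_{g'}\circ\rho'\simeq\inddiff_{g''}\circ\Theta\circ\mu(\_,g_V)\circ\rho\simeq\inddiff_{g\cup g_V}\circ\mu(\_,g_V)\circ\rho\simeq\inddiff_g\circ\rho\simeq\hat a$, as desired.

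The step demanding the most care is the third one: one must check that the surgeries defining $\tilde g$ can be carried out inside $W\cup_{S^{d-1}}V$ away from $D^d$ and $V$ (so that the boundary condition on $M'$ is untouched), that the resulting equivalence $\Theta$ is compatible with the index difference (via the naturality in \pref{Proposition}{prop:inddiff-to-ind} and \pref{Remark}{rem:compare-f-to-psc}), and that the basepoints line up exactly, so that the error terms $\inddiff_{\tilde g}(\tilde g)$, $\inddiff_{g^{0,1}}(g^{0,1})$ and $\inddiff_{g_V}(g_V)$ all vanish. Everything else is formal bookkeeping.
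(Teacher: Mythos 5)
Your route differs genuinely from the paper's. You enlarge $W$ by $V = W'\setminus\iota(\mathrm{int}\,D^d)$ and then reduce $W\cup V$ to $W' = D^d\cup V$ by interior surgeries inside the $W_0$-piece, giving $\calF(W)_{h_\circ}\too\calF(W\cup V)_{h'}\overset{\Theta}\too\calF(W')_{h'}$. The paper instead passes through the disk, $\calF(W)_{h_\circ}\xleftarrow{\ \mu_{\tilde g}\ }\calF(D^d)_{h_\circ}\xrightarrow{\ \mu_{\tilde g'}\ }\calF(W')_{h'}$: it lifts $\rho$ backwards along the stable-metric gluing $\mu_{\tilde g}$, which is a weak homotopy equivalence by \pref{Lemma}{lem:stable-metrics}, and then glues in the metric $\tilde g'$ on $W'\setminus D$ that \pref{Proposition}{prop:gluingdisk} supplies with $g^{0,1}\cup\tilde g'$ in the path component of $g'$. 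Both of the paper's arrows are gluing maps, so their compatibility with $\inddiff$ is exactly \pref{Theorem}{thm:additivity} applied twice, and nothing further is needed.

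The step you yourself flag as delicate is a genuine gap as written: the compatibility of the surgery equivalence $\Theta$ with $\inddiff$ does not follow from \pref{Proposition}{prop:inddiff-to-ind} or \pref{Remark}{rem:compare-f-to-psc}, which concern naturality under bundle maps and the comparison $\inddiff^\calF\to\inddiff^{\calR_\psc}$, not invariance of $\inddiff$ under surgery. The zigzag from \pref{Corollary}{cor:surgery-equivalence} interleaves inclusions $\calF(M,\varphi)\embeds\calF(M)$, for which compatibility is tautological, with homeomorphisms $\calF(M,\varphi)\cong\calF(M_\varphi,\varphi^\op)$ that change the underlying manifold; to see that $\inddiff^{M}$ corresponds to $\inddiff^{M_\varphi}$ across each such step you must apply \pref{Theorem}{thm:additivity} to the decomposition along the surgery sphere and note that the fixed standard piece contributes nothing. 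That does close the gap, but it turns your argument into iterated applications of exactly the additivity theorem that the paper invokes once per arrow; the paper's choice of passing through $D^d$ avoids the surgery zigzag entirely. Also, $\Theta$ is only a homotopy class of a zigzag, so it does not literally carry $g\cup g_V$ to $g^{0,1}\cup(h_\circ+\dt^2)\cup g_V$; the honest conclusion is that the two index-difference maps agree up to weak homotopy, which is all you need.
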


\begin{proof}
	The map $\mu_{\tilde g}\colon\calF(D^{d})_{h_\circ}\to\calF(W)_{h_\circ}$ gluing in the metric $\tilde g$ is a weak homotopy equivalence by \pref{Lemma}{lem:stable-metrics}. Hence there exists a lift $\tilde\rho\colon X\to \calF(D^{d})_{h_\circ}$ of $\rho$ along $\mu_{\tilde g}$ and by the \pref{Additivity theorem}{thm:additivity} the composition
	\[X\overset{\tilde\rho}\too\calF(D^{d})_{h_\circ} \overset{\inddiff_{g^{0,1}}}\too\Omega^{\infty+d+1}\ko\]
	is again a factorization of $\hat a$ up to homotopy. By \pref{Proposition}{prop:gluingdisk} there exists a metric $\tilde g'$ on $W'\setminus D$ such that $g^{0,1}\cup \tilde g'$ is homotopic to $g'$. Again, by \pref{Theorem}{thm:additivity} the composition given by
	\[X\overset{\tilde\rho}\too\calF(D^{d})_{h_\circ}\overset{\mu_{\tilde g'}}\too\calF(W')_{h'}\overset{\inddiff_{g'}} \too\Omega^{\infty+d+1}\ko.\]
	is homotopic to $\hat a$.
\end{proof}

\begin{rem}
	Note that we do \emph{not} require the manifold $W'$ to admit a $B\ort(d)\scpr{c-1}$ or to be $(c-2)$-connected. The proof reveals that the map $\rho'$ from \pref{Proposition}{prop:propagation} actually factors through $\calF(D^{d})_{h_\circ}$. The requirement of $W'$ being $\Spin$ stems from requiring the existence of the map $\inddiff_{g'}$.
\end{rem}

\subsection{Proof of {Theorem \ref*{thm:maingeneral}}}\label{sec:assemble}

The proof of the general statement of our main results now consists of assembling the parts. First we note that the result for the manifold $W$ from \pref{Theorem}{thm:mainforw} follows directly from  \pref{Theorem}{thm:mainforw} and \pref{Theorem}{thm:rational-mtw}. Since  $W$ is $B\ort(d)\scpr{c-1}$-cobordant to $D^{2n}$ relative to the boundary, \pref{Proposition}{prop:propagation} implies that it is true for every manifold $W'$ of dimension $2n$, in particular it is true for $S^{2n}$. If $\calF$ is fibrant, then by \pref{Theorem}{thm:increase} it holds for $D^{2n+1}$ and again by \pref{Proposition}{prop:propagation} it holds for all manifolds of dimension $2n+1$.\qed

\section{Epilog}\label{sec:epilog}
\noindent\cite{berw} is not the only paper about the space of positive scalar curvature which geometrically mostly depends on the parametrized surgery theorem. We therefore believe, that many other recent results can be proven for positive $p$-curvature und $(d-k)$-positive Ricci curvature. The following gives a (probably very incomplete) list of results that could possibly be generalized, with adapted dimension and connectivity assumptions, of course:
\begin{enumerate}
	\item[\cite{botvinnikhankeschickwalsh}] Here it is shown that there exist elements of infinite order in the some homotopy groups of the observer moduli space $\calM^\psc_{x_0}(M)$. For $k\ge2$ and $k$-positive Ricci curvature this follows from \cite{botvinnikwalshwraith} and the fact, that $k$-positive Ricci curvature is codimension $0$ surgery stable. For positive $p$-curvature however this is not known.
	\item[\cite{walsh_hspaces}] Here it is shown that the component of the round metric in $\calR_\psc(S^{d})$ is a $d$-fold loop space for $d\ge3$. This has recently been upgraded in \cite{walshwraith} to be true for $k$-positive Ricci curvature, $k\ge2$ and also $d\ge3$. Again, an analogue for positive $p$-curvature does not exist, but we suspect that it's true for $d\ge3$ and positive $p$-curvature for $p\le d-3$.
	\item[\cite{erw_psc2}] Here, the existence of (left-)stable metrics (cf. \pref{Definition}{def:stable-metrics})  is shown on cobordisms where the inclusion of the outgoing boundary is $2$-connected. Also this contains an extension of the results from \cite{berw} taking the fundamental group into account.
	\item[\cite{erw_psc3}] Here it is shown that the component of the round metric in $\calR_\psc(S^{d})$ is an infinite loop space for $d\ge 6$. However, the given proof does not work in low dimensions and we expect the dimension and connectivity assumptions to be worse worse compared to the ones from \cite{walsh_hspaces}.
	\item[\cite{actionofmcg}] For manifolds $M,N$ of dimension $d\ge6$ it is shown that the surgery map (cf. \pref{Corollary}{cor:surgery-equivalence} and the discussion below for the definition)
		\[\calS_{\calR_\psc,X,H}\colon\calR_\psc(M)\to \calR_\psc(N)\]
		 is independent of the handle decomposition $H$ and only depends on the $\theta$-cobordism class of the cobordism $X$ relative to $M\amalg N$, for $\theta$ the tangential $2$-type of the outgoing boundary $N$. This is then utilized to show that for a simply connected $\Spin$-manifold of dimension at least $6$ the pullback action $\pi_0(\diff^\Spin(M))\actson\pi_0(\calR_\psc(M))$ factors through the $\Spin$-cobordism group.
	\item[\cite{hspaces}] Here it is shown using the results from \cite{actionofmcg} that $\calR_\psc(M)$ is an $H$-space for every manifold $M$ of dimension at least $6$ which is nullbordant in its own tangential $2$-type. It is also shown that in dimensions at least $6$ the underlying $H$-spaces structures from \cite{erw_psc3} and \cite{walsh_hspaces} are equivalent.
\end{enumerate}

\appendix

\section{Curvature Computations}

\begin{lem}\label{lem:pstable}
	If $g$ has positive $p$-curvature then $g+\dt^2$ does as well.
\end{lem}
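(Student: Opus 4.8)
The plan is to reduce the lemma to an algebraic statement about curvature operators and then verify that statement by a short computation exploiting that the extra factor is flat. Write $N:=M\times\bbR$ and give it the product metric $g+\dt^2$. As in the proof of \pref{Proposition}{prop:fibrant}, $\partial_t$ spans a flat, parallel summand, so every component of the Riemann tensor of $g+\dt^2$ involving $\partial_t$ vanishes; equivalently, with respect to the orthogonal splitting $\Lambda^2 T_{(x,t)}N=\Lambda^2 T_xM\oplus(T_xM\wedge\partial_t)$ the curvature operator of $g+\dt^2$ at $(x,t)$ is the block sum $R_x\oplus 0$, where $R_x$ is the curvature operator of $g$ at $x$. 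It therefore suffices to prove: if $R\in(p\text{-curv}>0)_d$ then $R\oplus 0\in(p\text{-curv}>0)_{d+1}$, where $\bbE^{d+1}=\bbE^d\oplus\bbR e$ and $R\oplus 0$ acts as $R$ on $\Lambda^2\bbE^d$ and as $0$ on $\bbE^d\wedge e$.

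First I would fix a $p$-plane $P\subseteq\bbE^{d+1}$ and set $\beta:=|\pr_{P^\perp}(e)|^2\in[0,1]$. The idea is to compute $s_p(R\oplus 0)(P)$ in an orthonormal basis $E_1,\dots,E_{d+1-p}$ of $P^\perp$ adapted to the splitting: when $\beta>0$ one takes $E_1:=\pr_{P^\perp}(e)/\sqrt\beta$ and completes it inside $P^\perp\cap E_1^\perp$, so that $E_2,\dots,E_{d+1-p}$ are orthogonal to $e$. Writing $\pi\colon\bbE^{d+1}\to\bbE^d$ for the projection one then has $\pi E_i=E_i$ for $i\ge 2$ and $\pi E_1=\sqrt{1-\beta}\,F$ for a unit vector $F\in\bbE^d$ with $F\perp E_2,\dots,E_{d+1-p}$. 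Since $R\oplus 0$ vanishes on $\bbE^d\wedge e$, the defining sum $s_p(R\oplus 0)(P)=\sum_{i,j}\scpr{(R\oplus 0)(E_i\wedge E_j),E_i\wedge E_j}$ becomes $\sum_{i,j}\scpr{R(\pi E_i\wedge\pi E_j),\pi E_i\wedge\pi E_j}$, and expanding in the adapted basis gives
\[
s_p(R\oplus 0)(P)=\beta\cdot s_p(R)(V^\perp)+(1-\beta)\cdot s_{p-1}(R)\big((V')^\perp\big),
\]
where $V:=\operatorname{span}(E_2,\dots,E_{d+1-p})$ and $V':=\bbR F\oplus V$ are subspaces of $\bbE^d$ of dimensions $d-p$ and $d+1-p$, and all complements are taken in $\bbE^d$. (If $\beta=0$, that is if $e\in P$, only the second term occurs, with $V'$ the $(d+1-p)$-plane $P^\perp\subseteq\bbE^d$; if $p=0$ then $\beta=1$ and only the first term occurs, equal to $\scal(R)>0$, so no input beyond positive scalar curvature is needed there.)

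It then remains to observe that $s_p(R)(V^\perp)>0$ because $R\in(p\text{-curv}>0)_d$, and $s_{p-1}(R)\big((V')^\perp\big)>0$ because positive $p$-curvature implies positive $(p-1)$-curvature (see \pref{Section}{sec:curvatureconditions}; this is meaningful since $d\ge p+2$), so that $R$ also lies in $\big((p-1)\text{-curv}>0\big)_d$. As $\beta\in[0,1]$, the right-hand side above is a convex combination of two positive numbers and hence positive. Since $P$ was an arbitrary $p$-plane, $R\oplus 0\in(p\text{-curv}>0)_{d+1}$, which is the lemma.

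The hard part will be organizational rather than conceptual: one has to deal with the $p$-planes $P$ that are not adapted to the product splitting — those with $e\notin P$ and $e\notin P^\perp$ — and the computation only becomes transparent after the adapted choice of orthonormal basis of $P^\perp$, where the weights appearing in the reduced sum turn out to be exactly $\beta$ and $1-\beta$. That is precisely what converts $s_p$ of the degenerate operator $R\oplus 0$ into a convex combination of a $p$-curvature value and a $(p-1)$-curvature value of $R$; the passage from $\sum_{i,j}\scpr{R(\pi E_i\wedge\pi E_j),\pi E_i\wedge\pi E_j}$ to the two displayed $s$-terms, and the degenerate cases $\beta\in\{0,1\}$, are then routine.
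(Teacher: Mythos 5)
Your proposal is correct and is essentially the paper's own argument in slightly different clothing: you adapt an orthonormal basis of $P^\perp$ to the product splitting (your $E_1$ is the paper's $E_{d+1-p}=\sin\phi\,X+\cos\phi\,Y$, with $\beta=\cos^2\phi$), observe that the curvature operator of $g+\dt^2$ is the block sum $R\oplus 0$, and expand the defining sum for $s_p$. Where you present the result as the transparent convex combination $\beta\,s_p(R)(V^\perp)+(1-\beta)\,s_{p-1}(R)((V')^\perp)$, the paper leaves it in the form $a+2\sin^2\phi\,b$ with $a>0$, $a+2b>0$ and closes with a case distinction on the sign of $b$; these are the same computation.
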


\begin{proof}
  Let $(M,g)$ be a closed, $d$-dimensional Riemannian manifold.
  Let $P \subset \rmT_xM$ be a $p$-dimensional subspace and choose an orthonormal base $E_1, \ldots, E_{d-p}$ of $V^{\perp}$.
  Since $g$ has positive $p$-curvature we get:
  \begin{align*}
    s_{p,g}(V) = \sum_{i,j = 1}^{d-p} \sec(E_i,E_j) > 0.
  \end{align*}
  Again, let $W \subset \rmT_{(x,t)}(M \times [0,1])$ be a p-dimensional subspace and suppose $W^{\perp}$ is not fully contained in $\rmT_xM$ (otherwise positivity of $s_{p,g+\dt^2}(W)$ follows directly from positivity of $s_{p,g}$).
  Then choose an orthonormal base $E_1, \ldots, E_{d+1-p-1}$ for $W^{\perp} \cap \rmT_xM$ and extend this to an orthonormal basis of $W^{\perp}$ by a vector $E_{d+1-p} = \sin (\phi) X + \cos (\phi) Y$, where $\phi\in[0,\frac\pi2)$, $X \in \rmT_xM$ and $Y \in \rmT_t[0,1]$ are vectors of unit length.
  This yields the following description.
  \begin{align}\label{eq:p-curv-decomp}
    s_{p,g+\dt^2}(W)
    & = \sum_{i,j = 1}^{d+1-p} \sec(E_i,E_j)
    = \sum_{i,j = 1}^{d-p} \sec(E_i, E_j) + 2 \sum_{i = 1}^{d-p} \sec(E_i,E_{d+1-p}) \nonumber\\
    & = \sum_{i,j = 1}^{d-p} \sec(E_i, E_j) + 2 \sum_{i = 1}^{d-p} \sec(E_i,\sin (\phi) X + \cos (\phi) Y)
  \end{align}
  Next observe that for $1 \leq i < d-p$
  \begin{align*}
    0 = g(E_{d+1-p},E_i) = \sin (\phi) \,  g(X,E_i) + \cos (\phi) \, \underbrace{g(Y,E_i)}_{= 0}.
  \end{align*}

  \noindent\textbf{Case 1:} $\sin (\phi) = 0$.
  Here, certainly $E_{d+1-p} = Y \in \rmT_t[0,1]$ and we can conclude
  \begin{align*}
    s_{p,g+\dt^2}(W)
    \overset{\eqref{eq:p-curv-decomp}}{=} \sum_{i,j = 1}^{d-p} \sec(E_i, E_j) + 2 \sum_{i = 1}^{d-p}\underbrace{\sec(E_i,Y)}_{=0} > 0,
  \end{align*}
  because the first term is positive (it equals $s_{p,g}(\operatorname{span}(E_1, \ldots, E_{d-p})^{\perp}) > 0$).\vspace{0pt}

  \noindent\textbf{Case 2:} $\sin (\phi) \neq 0$, i.e.\ $g(X,E_i) = 0$.
  In this case we consider the latter term in \eqref{eq:p-curv-decomp}, which can be decomposed as follows.
  \begin{align*}
    \sec(E_i, \sin (\phi) X + \cos (\phi) Y)
    & = \sin^2 (\phi) \, \sec(E_i,X) + \cos^2 (\phi) \, \underbrace{\sec(E_i,Y)}_{= 0}\\
    & \quad + \sin (\phi) \, \cos (\phi) \, R(E_i,X,Y,E_i),
  \end{align*}
  for $1 \leq i \leq d-p$. In this equation the last term vanishes, because
  \begin{align*}
    R(E_i,X,Y,E_i) = -R(E_i,X,E_i,Y) = -g(\underbrace{R(E_i,X)E_i}_{\in T_pM},Y) = 0.
  \end{align*}
  and it follows that
  \begin{align*}
    s_{p,g+\dt^2}(W)
    = \underbrace{\sum_{i,j = 1}^{d-p} \sec(E_i, E_j)}_{=: a} + 2 \sin^2 (\phi) \, \underbrace{\sum_{i = 1}^{d-p} \sec(E_i,X)}_{=: b}.
  \end{align*}
  Observe that $a = s_{p,g}(\operatorname{span}(E_1, \ldots, E_{d-p})^{\perp}) > 0$.
  Because positive $p$-curvature on $M$ implies positive $(p-1)$-curvature, we have that
  \begin{align*}
    0   < s_{p-1,g}(\operatorname{span}(E_1, &\ldots, E_{d-p},X)^{\perp})\\
    	&= \sum_{i,j = 1}^{d-p} \sec(E_i,E_j) + 2 \sum_{i = 1}^{d-p} \sec(E_i,X)
    = a + 2b.
  \end{align*}
  A case distinction for the sign of $b$ leads to the conclusion that 
  $s_{p,g+\dt^2}(W) = a + 2\sin^2(\phi) \, b > 0$, as $0 < \sin^2(\phi) < 1$ and $a>0$.
\end{proof}

\begin{lem}\label{lem:computation-gajer}
With the notation from the proof of \pref{Lemma}{lem:gajer} we have
  \begin{align*}
    R_{(N \times \bbR, g_{f(t)} + \mathrm{d} t^2)}|_{(x,t_0)} & = R_{(N \times \bbR, g_{f(t_0)} + \mathrm{d} t^2)}\\
                                                              & \qquad  + O(|f'|) E_1 + O(|f'|^2) E_2 + O(|f''|) E_3.
  \end{align*}
\end{lem}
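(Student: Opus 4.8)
The plan is to run Gajer's coordinate computation \cite[p.~184]{gajer} of the Levi--Civita connection and Riemann tensor of the metric $G \coloneqq g_{f(t)} + \dt^2$ on $N \times \bbR$ and to keep track of the order of vanishing of each term in $f'$ and $f''$. Fix $x \in N$ and $t_0 \in \bbR$, choose local coordinates $(x^1,\dots,x^{d-1})$ on $N$ near $x$, and set $x^0 \coloneqq t$, so that $G_{ij} = (g_{f(t)})_{ij}$ for $1 \le i,j \le d-1$, $G_{0i} = 0$ and $G_{00} = 1$. By the chain rule $\partial_t G_{ij} = f'(t)\,(\partial_r g_r)_{ij}$ and $\partial_t^2 G_{ij} = f''(t)\,(\partial_r g_r)_{ij} + f'(t)^2\,(\partial_r^2 g_r)_{ij}$, with the $r$-derivatives taken at $r = f(t)$. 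Hence at $t = t_0$ every coefficient appearing below is an algebraic expression in $g_{f(t_0)}$, its inverse, $\partial_r g_r|_{f(t_0)}$, $\partial_r^2 g_r|_{f(t_0)}$ and their spatial derivatives, and in particular is determined by the path $\{g_r\}_{r\in[0,1]}$ alone.

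First I would record the Christoffel symbols of $G$ at $(x,t_0)$: the purely spatial ones equal $\Gamma^k_{ij}(g_{f(t_0)})|_x$; those with exactly one index equal to $0$, namely $\Gamma^0_{ij} = -\tfrac12 \partial_t G_{ij}$ and $\Gamma^k_{0i} = \tfrac12 G^{kl}\partial_t G_{il}$, are equal to $f'(t_0)$ times a tensor built from $g_{f(t_0)}$ and $\partial_r g_r|_{f(t_0)}$; and $\Gamma^0_{0i}$, $\Gamma^0_{00}$, $\Gamma^i_{00}$ vanish. Substituting into $R^l{}_{ijk} = \partial_i \Gamma^l_{jk} - \partial_j \Gamma^l_{ik} + \Gamma^l_{im}\Gamma^m_{jk} - \Gamma^l_{jm}\Gamma^m_{ik}$ and lowering an index with $G$, I would then sort the components of $R_G$ at $(x,t_0)$ according to how many of their indices equal $0$. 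Components with three or four indices equal to $0$ vanish by the symmetries of the curvature tensor. The purely spatial components are $R_{ijkl}(g_{f(t_0)})|_x$ plus products of two one-$0$ symbols, hence acquire a correction of order $|f'|^2$. The components with exactly one index equal to $0$ are assembled out of $\partial_t$ of spatial symbols and out of one-$0$ symbols and their spatial derivatives, each of which carries a single factor $f'(t_0)$, hence are of order $|f'|$ (the corresponding components of the product metric vanishing). The components with exactly two indices equal to $0$ involve $\partial_t$ of a one-$0$ symbol, which by the chain rule contributes an $f'(t_0)^2\,(\partial_r^2 g_r)$-term and an $f''(t_0)\,(\partial_r g_r)$-term, together with products of two one-$0$ symbols, hence are of order $|f'|^2 + |f''|$.

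Collecting these contributions, and using that the curvature of the product metric $g_{f(t_0)} + \dt^2$ at $(x,t_0)$ equals $R_{ijkl}(g_{f(t_0)})|_x$ on the spatial block and vanishes on every component carrying a $0$-index, I obtain
\[
 R_{(N \times \bbR,\, g_{f(t)} + \mathrm{d}t^2)}\big|_{(x,t_0)} = R_{(N \times \bbR,\, g_{f(t_0)} + \mathrm{d}t^2)}\big|_{(x,t_0)} + O(|f'|)\,E_1 + O(|f'|^2)\,E_2 + O(|f''|)\,E_3,
\]
where $E_1, E_2, E_3$ are the coefficient fields (valued in the algebraic curvature operators) produced above; each of them is an algebraic combination of $g_{f(t_0)}$, $g_{f(t_0)}^{-1}$, $\partial_r g_r|_{f(t_0)}$, $\partial_r^2 g_r|_{f(t_0)}$ and their spatial derivatives, hence a function of the point $f(t_0)$ on the path depending only on $\{g_r\}_{r\in[0,1]}$ and its $r$-derivatives, as claimed. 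The only real obstacle is bookkeeping: correctly enumerating all the terms of the Riemann tensor of a $t$-dependent metric and checking that each one lands in exactly the asserted order of vanishing in $f'$ and $f''$; there is no conceptual difficulty, and the continuous dependence of the $E_i$ on the family (used for part~(2) of \pref{Lemma}{lem:gajer}) is immediate from their explicit form.
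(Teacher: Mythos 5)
Your computation is correct. It differs from the paper's mainly in organization rather than substance: you work entirely in coordinates with $x^0 = t$ and sort the components of the Riemann tensor by the number of time indices, reading off the orders of vanishing from the Christoffel symbols (one time index $\Rightarrow O(|f'|)$; their $t$-derivatives $\Rightarrow O(|f''|) + O(|f'|^2)$; two one-time-index symbols multiplied $\Rightarrow O(|f'|^2)$). The paper instead dispatches the purely spatial block geometrically via the Gauss equation for the hypersurface $Y = N \times \{t_0\}$: the second fundamental form is $\frac{1}{2} Tg = \frac{1}{2} f' \, \partial_r g_r$, so $R^M|_{\mathrm{spatial}} = R^Y - \mathrm{I\!I}_Y \wedge \mathrm{I\!I}_Y$ immediately gives the $O(|f'|^2)$ correction there, and then handles the $T$-involving components by expanding $R(\,\cdot\,,\,\cdot\,,\,\cdot\,,T)$ directly through the Koszul formula. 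The Gauss-equation shortcut makes the spatial block essentially one line, but the mixed and $R(X_i,T,X_k,T)$ components require the same kind of Christoffel bookkeeping in either argument, so the two proofs are parallel in content. Your observation that the coefficient tensors $E_1, E_2, E_3$ are built algebraically from $g_{f(t_0)}$, $\partial_r g_r$, $\partial_r^2 g_r$ and their spatial derivatives, and hence depend continuously on the family $\{g_r\}$, is exactly what is needed for part~(2) of \pref{Lemma}{lem:gajer}.
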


\begin{proof}
 Let $\overline x_0 = (x_0, t_0) \in N \times [0,1]$ and let $M := N \times [0,1]$, as well as $g := g_{f(t)} + \mathrm{d} t^2$. Then $\rmT_{\overline x_0} M = \rmT_{x_0}N \oplus \rmT_{t_0}[0,1]$.
Choose a normal coordinate frame $(X_1, \ldots, X_n, T)$ with $T = \frac{\partial}{\partial t}$ around $\overline x_0$. In particular we have from the definition of $g$ that
\begin{align}\label{eq:T-is-normal}
  g(X_i, T) = 0
  \quad
  \text{ and }
  \quad
  g(T, T) = 1.
\end{align}

\noindent Now consider the hypersurface $Y := N \times \{t_0\} \hookrightarrow (M, g)$ with the metric induced from $g$.
By definition, $g|_Y = g_{f(t_0)}$.
From the Gauß equation, we get
\begin{align*}
  R^{(Y, g|_Y)} = R^{(M, g)} + \mathrm{I\!I}_Y \wedge \mathrm{I\!I}_Y,
\end{align*}
where $\two_Y$ denotes the second fundamental form and we let $\mathrm{I\!I}_Y \wedge \mathrm{I\!I}_Y(a,b,c,d) = \mathrm{I\!I}_Y(a,d) \mathrm{I\!I}_Y(b,c) - \mathrm{I\!I}_Y(a,c) \mathrm{I\!I}_Y(b,d)$. Using this we get
\begin{align}\label{eq:gauss}
  R^M(X_i, X_j, X_k, X_l) = \underbrace{R^Y(X_i, X_j, X_k, X_l)}_{ = R^{(N, g_{f(t_0)}) \times \mathbb E}(X_i, X_j, X_k, X_l)} -\  \mathrm{I\!I}_Y \wedge \mathrm{I\!I}_Y(X_i, X_j, X_k, X_l)
\end{align}
\noindent Note that by \eqref{eq:T-is-normal}, $T$ is a normal field to $Y$.
In particular, we have $\mathrm{I\!I}_Y(A, B) = g(\nabla_A T, B) = - g(T, \nabla_AB)$. We obtain via Koszul (using \eqref{eq:T-is-normal} and the fact that in a coordinate frame the Lie brackets vanish)
\begin{align*}
  \mathrm{I\!I}_Y(X_i, X_j) & = -g(T, \nabla_{X_i} X_j)\\
                     & = -\frac{1}{2}\big( X_i\ccancel{g(X_j, T)} + X_j\ccancel{g(X_i,T)} - Tg(X_i,X_j) \big.\\
                     & \quad \big. + g(\ccancel{[X_i,X_j]}, T) - g(\ccancel{[X_i,T]},X_j) - g(\ccancel{[X_j,T]}, X_i)\big)\\
                     & = \frac{1}{2} Tg(X_i, X_j)
\end{align*}
\noindent Thus we can write
\begin{align*}
  Tg(X_i, X_j) = T(g_{f(t_0)}(X_i, X_j)) = \frac{\partial g_{f(t_0)}(X_i, X_j)}{\partial t}
  = f'(t_0) \cdot \underbrace{\frac{\partial g_r(X_i, X_j)}{\partial r}}_{=: C_{ij}}.
\end{align*}
Note that the constants $C_{ij} = C_{ij}(t_0)$ depend continuously on $t_0$ and the family $\{g_r\}$ and are independent of $f$. Furthermore, they are bounded in $t_0$ by compactness. At $\overline x_0$, we have\begin{align*}
  \big.\mathrm{I\!I}_Y(X_i, X_j)\big|_{\overline x_0}  & = \big. \frac{1}{2} Tg(X_i,X_j)\big|_{\overline x_0}
                                                  = \frac{1}{2} f'(t_0)\left.\frac{\partial}{\partial r}\right|_{r = f(t_0)}g_r(X_i, X_j)
\end{align*}
\noindent This is enough to see that in \eqref{eq:gauss} the  $\mathrm{I\!I}_Y\wedge\mathrm{I\!I}_Y$ term only contributes with a distortion of order $O(|f'|^2)$, i.e. we can write
\begin{align}\label{eq:Rijkl}
  R^{M}(X_i, X_j, X_k, X_l)|_{\overline x_0} = R^Y(X_i, X_j, X_k, X_l)_{(x_0, 0)} + O(|f'|^2)E,
\end{align}
where $E$ is an expression in $C_{il}, C_{jk}, C_{ik}, C_{jl}$. Now let us check the remaining terms. First, metricity of the Levi-Civita  connection shows
\begin{align}\label{eq:ijkT}
 \nonumber g(\nabla_{X_i}\nabla_{X_j}X_k, T) &= X_ig(\nabla_{X_j}X_k, T) - g(\nabla_{X_j}X_k, \nabla_{X_i}T)\\
  \nonumber & = X_i(X_j\ccancel{g(X_k, T)} - g(X_k, \nabla_{X_j}T)) - g(\nabla_{X_j}X_k, \nabla_{X_i}T)\\
  & = -X_ig(X_k, \nabla_{X_j}T) - g(\nabla_{X_j}X_k, \nabla_{X_i}T)
\end{align}

\noindent Furthermore, note that $\nabla_T T = 0$, since
\begin{align*}
  g(X_i, \nabla_T T) & = \frac{1}{2}(\ccancel{X_ig(T, T)} + T\ccancel{g(T,X_i)} - T\ccancel{g(X_i,T)}) = 0,\\
  g(T, \nabla_T T) & = \frac{1}{2}(\ccancel{Tg(T, T)} + \ccancel{Tg(T,T)} - \ccancel{Tg(T,T)}) = 0.
\end{align*}
Because of \eqref{eq:T-is-normal}, and using what we know so far
\begin{align*}
  g(X_k, \nabla_{X_i}T) & = X_i\ccancel{g(X_k,T)} - g(\nabla_{X_i}X_k, T) = \frac{1}{2} Tg(X_i, X_k) = \frac{1}{2} f' C_{ik}\\
  g(\nabla_T X_i, T) & = g(X_i, \nabla_T T) = 0
\end{align*}
A computation similar to the one in \eqref{eq:ijkT}, shows (we use the convention $X_{n+1} := T$)
\begin{align*}
  g(\nabla_T\nabla_{X_i}X_k, T)
  & = -Tg(X_k, \nabla_{X_i}T) - g(\nabla_{X_j}X_k, \ccancel{\nabla_TT})\\
  &  = -T(Tg(X_i,X_k))\\
  & = - f'' C_{ik}\\
  g(\nabla_{X_i}\nabla_TX_k, T)
  & = -X_ig(X_k, \ccancel{\nabla_TT}) - g(\nabla_{T}X_k, \nabla_{X_i}T)\\
  &  = - g(\nabla_{T}X_k, \nabla_{X_i}T)\\
  & = -g(\sum_l \Gamma_{kl}^{n+1} X_l, \sum_m \Gamma^i_{n+1,m}X_m)
\end{align*}
Here we note that
\begin{align*}
  \Gamma^i_{n+1,m} & = \frac{1}{2}\sum_a g^{ia}(Tg(X_a,X_m) + X_m\ccancel{g(T, X_a)} - X_a\ccancel{g(T, X_m)})
                     = \frac{1}{2} \sum_a f' g^{ia} C_{am}\\
  \Gamma^{n+1}_{kl} & = \frac{1}{2} \sum_m g^{n+1,m}(X_kg(X_m, X_l) + X_lg(X_k,X_m) - X_mg(X_k,X_l))\\
                   & = \frac{1}{2} (X_k\ccancel{g(T, X_l)} + X_l\ccancel{g(X_k,T)} - Tg(X_k,X_l))\\
                   & = -\frac{1}{2} f' C_{kl}.
\end{align*}
This is true for all $1 \leq i, m, k, l \leq n+1$. 
From
\begin{align*}
  g(\nabla_{X_i}\nabla_TX_k, T) & = \frac{1}{4} (f')^2 \sum_{m,l,a} C_{kl} C_{am} g^{ia} g_{lm},
\end{align*}
we obtain
\begin{align}\label{eq:RiTkT}
  R^{(M,g)}(X_i, T, X_k, T)
  \nonumber & = g(\nabla_{X_i}\nabla_TX_k - \nabla_T\nabla_{X_i}X_k - \nabla_{\ccancel{[X_i, T]}}X_k, T)\\
            & = O(|f'|^2)\hat E_1 + O(|f''|)\hat E_2.
\end{align}

\noindent It remains to consider following term.
\begin{align*}
  R^{(M,g)}(X_i, X_j, X_k, T)
  & = g(\nabla_{X_i}\nabla_{X_j}X_k - \nabla_{X_j}\nabla_{X_i}X_k - \nabla_{\ccancel{[X_i, X_j]}}X_k, T).
\end{align*}
\noindent 
By \eqref{eq:ijkT} we only need to determine the following terms:
\begin{align*}
  g(\nabla_{X_j} X_k, \nabla_{X_i}T) & = g(\sum_l \Gamma^j_{kl} X_l, \sum_m \Gamma^i_{n+1,m}X_m)
                                       = \frac{1}{2} f' \sum_{l,a} \Gamma^j_{kl} g^{ia} C_{am} g_{lm}\\
  X_ig(X_k, \nabla_{X_j}T) & = X_i(\frac{1}{2}f'C_{jk})
                             = \frac{1}{2}f'X_i(C_{jk}).
\end{align*}
Thus we conclude that
\begin{align}\label{eq:RijkT}
  R^{(M,g)}(X_i, X_j, X_k, T) = O(|f'|) \tilde E.
\end{align}
Using the curvature tensor's symmetries, we have fully determined $R^{(M,g)}$ via \eqref{eq:Rijkl}, \eqref{eq:RiTkT} and \eqref{eq:RijkT} and find that
\begin{align*}
  R^{(M,g)}|_{(x_0, t_0)} = R^{(N \times [0,1], g_{f(t_0) + \mathrm{d} t^2})}|_{(x_0, 0)} + O(|f'|) E_1 + O(|f'|^2) E_2 + O(|f''|) E_3
\end{align*}
for some $E_1, E_2, E_3$, which only depend continuously on the path of metrics and its derivatives in $r$-direction.
\end{proof}

\printbibliography

\end{document}